\newtheorem{theorem}{Theorem}[section]
\newtheorem{lemma}[theorem]{Lemma}
\newtheorem{proposition}[theorem]{Proposition}
\theoremstyle{definition}
\newtheorem{remark}[theorem]{Remark}
\numberwithin{equation}{section}
\renewcommand{\labelenumi}{\roman{enumi})}
\renewcommand\theenumi\labelenumi
\renewcommand{\leq}{\leqslant}
\renewcommand{\le}{\leqslant}
\renewcommand{\geq}{\geqslant}
\renewcommand{\ge}{\geqslant}
\newcommand{\tl}{\tilde}
\newcommand{\Be}{\begin{equation}}
\newcommand{\Ees}{\end{equation*}}
\newcommand{\Bes}{\begin{equation*}}
\newcommand{\Ee}{\end{equation}}
\newcommand{\R}{\mathbb{R}}
\newcommand{\E}{\mathbb{E}}
\newcommand{\e}{\varepsilon}
\newcommand{\PP}{\mathbb{P}}
\newcommand{\N}{\mathbb{N}}
\newcommand{\mcl}{\mathcal}
\newcommand{\dif}{\mathrm{d}}
\begin{document}

\title[An approximation to steady-state of M/Ph/n+M queue]
{An approximation to steady-state of M/Ph/n+M queue}

\author[X. H. Jin]{Xinghu Jin}
\address{Xinghu Jin: 1. Department of Mathematics,
Faculty of Science and Technology,
University of Macau,
Av. Padre Tom\'{a}s Pereira, Taipa
Macau, China; \ \ 2. UM Zhuhai Research Institute, Zhuhai, China.}
\email{yb77438@connect.um.edu.mo}

\author[G. Pang]{Guodong Pang}
\address{Guodong Pang: The Harold and Inge Marcus Department of Industrial and Manufacturing Engineering, College of Engineering, Pennsylvania State University, University Park, PA 16802}
\email{gup3@psu.edu}

\author[L. Xu]{Lihu Xu}
\address{Lihu Xu: 1. Department of Mathematics,
Faculty of Science and Technology,
University of Macau,
Av. Padre Tom\'{a}s Pereira, Taipa
Macau, China; \ \ 2. UM Zhuhai Research Institute, Zhuhai, China.}
\email{lihuxu@umac.mo}

\author[X. Xu]{Xin Xu}
\address{Xin Xu: 1. Department of Mathematics,
Faculty of Science and Technology,
University of Macau,
Av. Padre Tom\'{a}s Pereira, Taipa
Macau, China; \ \ 2. UM Zhuhai Research Institute, Zhuhai, China.}
\email{yb77439@umac.mo}

\keywords{$M/Ph/n+M$ queues; Halfin-Whitt regime; Multi-dimensional diffusion with piecewise-linear drift; Euler-Maruyama scheme; Central limit theorem; Moderate deviation principle; Stein's equation; Malliavin calculus; Weighted occupation time}
\subjclass[2010]{60H15; 60G51; 60G52.}

\begin{abstract}
In this paper, we develop a stochastic algorithm based on Euler-Maruyama scheme to approximate the invariant measure of the limiting multidimensional diffusion of the $M/Ph/n+M$ queue.
Specifically, we prove a non-asymptotic error bound between the invariant measures of the approximate model from the algorithm and the limiting diffusion of the queueing model.
Our result also provides an approximation to the steady-state of the diffusion-scaled queueing processes in the Halfin-Whitt regime given the well established interchange of limits property.
To establish the error bound, we employ the recently developed Stein's method for multi-dimensional diffusions, in which the regularity of Stein's equation developed by Gurvich  \cite{Gur1} plays a crucial role.

We further prove the central limit theorem (CLT) and the moderate deviation principle (MDP) for the occupation measures of the limiting diffusion of the $M/Ph/n+M$ queue and its Euler-Maruyama scheme. In particular, the variance of the CLT of the limiting queue is determined by using Stein's equation and Malliavin calculus.

\end{abstract}

\maketitle

\tableofcontents\thispagestyle{plain}

\section{Introduction} \label{Introduction}

A fundamentally important result in heavy-traffic queueing theory is the validity of diffusion approximations, that is, the interchange of limits property.
It provides an approximation of the steady state distribution of the queueing processes in a heavy-traffic regime by using the invariant measure of the limiting diffusion.
For instance, the interchange of limits results are proved for stochastic networks in \cite{GZ,BL,G14,YY16,YY18} and for many server queues \cite{DDG1,GS,S15,AHP1}.
For some queueing models, the invariant measures of the limiting diffusions can be explicitly characterized \cite{HW87, DDG1}.
When this is impossible, numerical schemes are often drawn upon to compute the invariant measures, for example, computation of invariant measures of Reflected Brownian motions in \cite{DH92,DH1,BCR1}. Our paper  is of similar flavor as \cite{BCR1} where a Euler scheme approximation is developed for the constrained diffusions arising as scaling limits of stochastic networks.

In this paper, we focus on the  $M/Ph/n+M$ model in the Halfin-Whitt regime.
For many-server queues with exponential services, the limiting diffusions of the scaled queueing processes are one-dimensional with a piecewise-linear drift, whose steady state distributions have explicit expression as shown in \cite{BW,DDG1}. However, for  many-server queues with phase-type service time distributions, the limiting diffusions are multidimensional with a piecewise-linear drift, as shown in \cite{PR,DHT1}. Although the validity of diffusion approximations is proved for the $M/Ph/n+M$ queues in \cite{DDG1},  the multi-dimensional limiting diffusion does not have an explicit invariant measure \cite{DG1}.
In fact, characterization of multi-dimensional piecewise diffusions has been left  as an open problem thus far in \cite{BW}. The objective in this paper is to provide an approximation for the invariant measure of the limiting diffusion of the  $M/Ph/n+M$ model, and thus also an approximation for the steady-state of the diffusion-scaled queueing processes in all phases for the model in the Halfin--Whitt regime.

\subsection{Summary of results and contributions}
The limiting diffusion  $(X_t)_{t\geq 0}$ satisfies the following stochastic differential equation (for short, SDE):
\begin{eqnarray}\label{hSDEg}
\dif X_{t} &=& g (X_{t} ) \dif t + \sigma \dif B_t
\end{eqnarray}
with $(B_t)_{t\geq 0}$ being a $d$-dimensional standard Brownian motion and
\begin{eqnarray*} %\label{e:g}
g(x) &=& -\beta p-Rx+(R-\alpha I)p({\rm e}' x)^+, \ \ \forall x\in \R^d\,.
\end{eqnarray*}
Here  $y^+=\max\{0,y\}$ for all $y\in \R$,   $\alpha> 0$ is the patience rate,  $\beta$ is the slack in the arrival rate relative to a critically loaded system,  $p \in \R^d$ is a vector of non-negative entries whose sum is equal to one, ${\rm e}=(1,1,\cdots,1)'$ with $'$ denoting the transpose,  $I$ is the identity matrix,
 \begin{eqnarray*}
R \ = \ (I-P') \textrm{diag}(v), \quad \frac{1}{\zeta} \ = \ {\rm e}' R^{-1}p, \quad
\gamma \ = \ \zeta R^{-1}p,
\end{eqnarray*}
where $v=(v_1,\cdots,v_d)$ with $v_k$ being the service rate in phase $k$,
 and $P$ be a sub-stochastic matrix describing the transitions between service phases such that $P_{ii}=0$ for $i=1,\cdots,d$, and $I-P$ being invertible (\cite[Section 2.2]{DHT1}).
 Assume that the Ph phase distribution has mean $1$, that is, $\zeta=1$. It is easy to check ${\rm e}' \gamma=1$.
 $\sigma \sigma'$ has the following form:
\begin{eqnarray*}
\sigma \sigma' &=& \textrm{diag} (p) + \sum_{k=1}^d \gamma_k v_k H^{(k)} + (I- P')\textrm{diag}(v) \textrm{diag}(\gamma)(I-P),
\end{eqnarray*}
where $\gamma=(\gamma_1,\cdots,\gamma_d)$, $H^{(k)}=(H^{(k)}_{ij})_{1\leq i, j \leq d}\in \R^{d\times d}$ with $H_{ii}^{(k)} = P_{ki}(1-P_{ki})$ and $H_{ij}^{(k)}=-P_{ki}P_{kj}$ for $j\neq i$.  Throughout this paper, we assume that there exists some constant $c>0$ such that  $\xi^{'}\sigma\sigma^{'}\xi \geq c\xi^{'}\xi$ for all  $\xi \in \R^d$. It is shown in \cite[Theorem 3]{DG1} that  the diffusion  $(X_t)_{t\geq 0}$ is exponentially ergodic and admits a unique invariant measure $\mu$. It is well known that for every $x\in \R^d$, SDE (\ref{hSDEg}) has a unique solution $(X^{x}_t)_{t\ge 0}$ starting from $x$ and the solution is nonexplosive from \eqref{e:AV} below  and \cite[Theorem 2.1]{MT2}.

The {\bf EM scheme} that we design to approximate the invariant measure $\mu$ of $(X_t)_{t\geq 0}$ reads as the following:
\begin{eqnarray}\label{e:XD}
\tl{X}_{k+1}^{\eta}
&=& \tl{X}_{k}^{\eta}+g(\tl{X}_{k}^{\eta}) \eta + \sqrt{\eta} \sigma \xi_{k+1},
\end{eqnarray}
where $k\in \mathbb{N}_0\triangleq \mathbb{N}\cup\{0\}$, $\tl{X}^{\eta}_{0}$ is the initial value and $\{\xi_k\}_{k\in \mathbb{N}}$ are the independent standard $d$-dimensional Gaussian random variables.

Our first main result is the following theorem about this EM scheme, which provides a non-asymptotic estimate for the error between the ergodic measures of the SDE and its EM scheme.

\begin{theorem}\label{thm:DDE}
$(\tl{X}_{k}^{\eta})_{k\in \mathbb{N}_0}$ defined by \eqref{e:XD} admits a unique invariant measure $\tl \mu_{\eta}$ and is exponentially ergodic. Moreover, the following two statements hold:

(i) There exists some positive constant $C$, not depending on $\eta$, such that
\begin{eqnarray*}
d_{W}(\mu,\tl \mu_{\eta}) \ \leq \ C\eta^{\frac{1}{2}},
\end{eqnarray*}
where $d_W$ is the Wasserstein-1 distance, see \eqref{e:dW} for the definition.

(ii)  For any (small) $\delta>0$ and taking $\eta=\delta^2$, we can run the EM algorithm $N:=O(\delta^{-2} \log \delta^{-1})$ steps so that the law of $\tl{X}^\eta_N$, denoted as $\mcl L(\tl{X}^\eta_N)$, satisfies
$$d_W(\mcl L(\tl{X}^\eta_N),\mu) \ \leq \ \delta.$$
\end{theorem}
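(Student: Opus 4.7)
The plan is to prove the three claims of Theorem~\ref{thm:DDE} in sequence. Parts (i) and (ii) each rest on a separate engine: a discrete Lyapunov drift for the chain (part (i)), and Stein's method combined with the regularity estimates of \cite{Gur1} for the Poisson equation of the limiting diffusion (part (ii)). Part (iii) is then essentially a triangle inequality between (i) and (ii) together with a careful choice of $\eta$ and $N$.

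For part (i), I would take $V(x)=1+|x|^{2}$ and compute
\begin{equation*}
\E[V(\tl X^{\eta}_{k+1})\mid \tl X^{\eta}_{k}=x] = V(x) + 2\eta\langle x,g(x)\rangle + \eta^{2}|g(x)|^{2} + \eta\,\mathrm{tr}(\sigma\sigma'),
\end{equation*}
using that $\xi_{k+1}$ is standard Gaussian and independent of $\tl X^{\eta}_{k}$. The same dissipativity of $g$ that underlies the exponential ergodicity of $(X_{t})$ in \cite{DG1} yields $\langle x,g(x)\rangle\le -c_{1}|x|^{2}+c_{2}$, so for all sufficiently small $\eta$ one obtains a Lyapunov-type drift bound
\begin{equation*}
\E[V(\tl X^{\eta}_{k+1})\mid \tl X^{\eta}_{k}=x] \le (1-c_{3}\eta)V(x)+c_{4}\eta.
\end{equation*}
The Gaussian one-step density has full support, producing small-set minorization on compact sets; Harris' theorem then delivers the unique invariant measure $\tl\mu_{\eta}$, $V$-geometric ergodicity, and a second-moment bound on $\tl\mu_{\eta}$ that is uniform in small $\eta$.

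For part (ii), the approach is Stein's method. For a $1$-Lipschitz test function $h\colon\R^{d}\to\R$, let $f_{h}$ solve the Poisson equation $\mcl L f_{h} = h - \int h\,\dif\mu$, where $\mcl L$ is the generator of \eqref{hSDEg}. The regularity results of \cite{Gur1}, tailored to diffusions with piecewise-linear drift, supply uniform bounds on $\nabla f_{h}$ and $\nabla^{2} f_{h}$ together with a H\"older modulus for $\nabla^{2} f_{h}$ that survives the non-smoothness of $g$ at the hyperplane $\{{\rm e}' x = 0\}$. Invariance of $\tl\mu_{\eta}$ under the one-step EM kernel $\mcl A_{\eta} f(x):=\eta^{-1}\E[f(\tl X^{\eta}_{1})-f(x)\mid \tl X^{\eta}_{0}=x]$ gives
\begin{equation*}
\int h\,\dif\tl\mu_{\eta} - \int h\,\dif\mu = \int \bigl(\mcl L f_{h} - \mcl A_{\eta} f_{h}\bigr)\dif\tl\mu_{\eta}.
\end{equation*}
A second-order Taylor expansion of $f_{h}(x+\eta g(x)+\sqrt\eta\,\sigma\xi_{1})$ around $x$, together with $\E[\sigma\xi_{1}]=0$ and $\E[(\sigma\xi_{1})(\sigma\xi_{1})']=\sigma\sigma'$, isolates a deterministic term $\tfrac{\eta}{2}\,g(x)'\nabla^{2} f_{h}(x)g(x)$ plus a remainder whose size is governed by the H\"older modulus of $\nabla^{2} f_{h}$. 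Both pieces are $O(\eta^{1/2})$ after integration against the uniform moment bound on $\tl\mu_{\eta}$, and taking the supremum over $1$-Lipschitz $h$ yields $d_W(\mu,\tl\mu_{\eta})\le C\eta^{1/2}$. The crux — and the step I expect to be the main obstacle — is ensuring that the constants inherited from \cite{Gur1} are genuinely uniform in $h$ (depending only on its Lipschitz norm) and that the non-smoothness of $g$ at $\{{\rm e}' x=0\}$ does not cost an additional factor of $\eta^{-1/2}$ in the expansion; this is where the piecewise-linear structure of the drift combined with the ellipticity of $\sigma\sigma'$ must be exploited carefully.

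For part (iii), I would combine (i) and (ii) via the triangle inequality
\begin{equation*}
d_W\bigl(\mcl L(\tl X^{\eta}_{N}),\mu\bigr) \le d_W\bigl(\mcl L(\tl X^{\eta}_{N}),\tl\mu_{\eta}\bigr) + d_W(\tl\mu_{\eta},\mu).
\end{equation*}
The exponential ergodicity from part (i), with a contraction rate $\lambda>0$ uniform in small $\eta$ (which follows from the uniform-in-$\eta$ Lyapunov constants), bounds the first term by $C\eup^{-\lambda N\eta}$; by (ii) the second term is $\le C\eta^{1/2}$. Setting $\eta=\delta^{2}$ gives the second term $\le C\delta$, and choosing $N$ so that $\lambda N\eta\ge \log(C/\delta)$, i.e.\ $N\ge \lambda^{-1}\delta^{-2}\log(C/\delta) = O(\delta^{-2}\log\delta^{-1})$, drives the first term below $\delta$ as well, which after an innocuous rescaling of $\delta$ yields the claim.
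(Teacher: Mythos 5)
Your Stein-method treatment of the Wasserstein bound and the triangle-inequality step match the paper's proof of parts (i)--(ii) essentially exactly: the identity $\tl\mu_{\eta}(h)-\mu(h)=\E[\mathcal{A}f(W)]$ with $W\sim\tl\mu_{\eta}$ (which the paper obtains by writing $0=\E f(W')-\E f(W)$ for $W'$ the one-step iterate), the second-order Taylor expansion isolating a $g'\nabla^2 f\,g$ piece plus a H\"older remainder for $\nabla^2 f$, the polynomial regularity estimates of Lemma~\ref{lem:Lipregf} (uniform over $h\in{\rm Lip}_0(1)$), the integration against the uniform-in-$\eta$ moment bound on $\tl\mu_{\eta}$ from Proposition~\ref{p:GeneralErgodicEM}, and the choice $\eta=\delta^2$, $N=O(\delta^{-2}\log\delta^{-1})$ are all present in the paper.

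The gap is in your preliminary ergodicity step. You assert that the dissipativity underlying the exponential ergodicity of $(X_t)$ in \cite{DG1} gives $\langle x,g(x)\rangle\le -c_1|x|^2+c_2$, i.e.\ dissipativity in the Euclidean inner product. That is not what \cite{DG1} provides, and for this piecewise-linear drift it is in general false: on $\{{\rm e}'x>0\}$ the drift matrix $-R+(R-\alpha I)p{\rm e}'$ need not have a negative-definite symmetric part, and the cross term $({\rm e}'x)^+\langle x,(R-\alpha I)p\rangle$ can be positive and of order $|x|^2$. The whole point of \cite{DG1} is to construct a \emph{common quadratic} Lyapunov matrix $\tilde{Q}$ that works simultaneously for both linear regimes, and the paper accordingly uses $V(y)=({\rm e}'y)^2+\kappa[y-p\phi({\rm e}'y)]'\tilde{Q}[y-p\phi({\rm e}'y)]+\hat{c}_2$ with a smooth cutoff $\phi$, not $1+|x|^2$. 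Because that $V$ is not a polynomial, the paper also does not obtain the discrete drift by computing $\E[V(\tl X^{\eta}_{k+1})\mid \tl X^{\eta}_k]$ directly as you propose; it instead establishes the continuous-time drift $\mathcal{A}V^{\ell}\le -c_1V^{\ell}+\breve{c}_{\ell}$, proves a strong one-step error bound $\E|X^x_\eta-\tl X^{\eta,x}_1|^{2\ell}\le \tl{C}_\ell(1+V^\ell(x))\eta^{3\ell}$, and then transfers the continuous drift to the EM kernel (Lemmas~\ref{lem:AV2}--\ref{lem:PXeD}). Your computation with $V(x)=1+|x|^2$ does not close as written, even though the conclusions you draw from it (unique invariant measure, geometric ergodicity, uniform-in-$\eta$ moment bounds) are exactly what Proposition~\ref{p:GeneralErgodicEM} establishes.
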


Our second set of main results are the central limit theorem (for short, CLT), the moderate deviation principle (for short, MDP) for the long term behavior of $(X_t)_{t\geq 0}$ and $(\tl{X}^{\eta}_k)_{k\in \mathbb{N}_0}$. For any $x \in \R^d$ and $T>0$, the empirical measure $\mcl E_T^x$ of $(X^x_t)_{t \ge 0}$ is defined by
 \begin{eqnarray*}
 \mcl E_T^x(A) \ = \ \frac 1T \int_0^T \delta_{X^x_s}(A) \dif s, \ \ \ \ \  A \in \mathcal{B}(\R^d),
 \end{eqnarray*}
where $\mathcal{B}(\R^d)$ is the collection of Borel sets on $\R^{d}$, $\delta_y(\cdot)$ is a delta measure, that is, $\delta_y(A)=1$ if $y \in A$ and $\delta_y(A)=0$ if $y \notin A$. It is easy to check that for any measurable function $h: \R^d \rightarrow \R$,
$$\mcl E_T^x(h) \ = \ \frac 1T \int_0^T h(X^x_s) \dif s.$$
For any $x \in \R^d$ and $n \in \mathbb{N}$, the empirical measure $ \mcl E_n^{\eta,x} $ of $(\tl{X}^{\eta,x}_k)_{k\in \mathbb{N}_0}$ is defined by
 \begin{eqnarray*}
 \mcl E_n^{\eta,x}(A) \ = \ \frac{1}{n} \sum_{k=1}^n \delta_{\tl{X}^{\eta,x}_k}(A), \ \ \ \ \  A \in \mathcal{B}(\R^d).
 \end{eqnarray*}
%where $\mathcal{B}(\R^d)$ is the collection of Borel sets on $\R^{d}$ and $\delta_y(\cdot)$ is a delta measure. 
It is easy to check that for any measurable function $h: \R^d \rightarrow \R$,
$$\mcl E_n^{\eta,x}(h) \ = \ \frac{1}{n} \sum_{k=1}^n h(\tl{X}^{\eta,x}_k).$$

\begin{theorem} [CLT] \label{thm:CLT}
For any $h\in \mathcal{B}_b(\R^d,\R)$ and $x \in \R^d$,  $\sqrt{t} \left[\mcl E^x_t(h)-\mu(h)\right]$ weakly converges to  $\mathcal{N}(0,\mu(|\sigma^{\prime}  \nabla f|^2))$ as  $t \rightarrow \infty$, where $f$ is the solution to the Stein's equation \eqref{e:SE}. Furthermore, $\mu(|\sigma' \nabla f|^2)\leq C\|h\|^2_{\infty}<\infty$ for some $C>0$.
%	\begin{eqnarray*}
%	\sqrt{t} \left[\mcl E^x_t(h)-\mu(h)\right]  &\Rightarrow&  \mathcal{N}(0,\mu(|\sigma^{\prime}  \nabla f|^2)) \quad \quad \ {\rm as} \ \  t \rightarrow \infty,
%	\end{eqnarray*}
\end{theorem}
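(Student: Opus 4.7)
The approach is a Stein equation / martingale CLT decomposition. Let $f$ denote the solution to Stein's equation \eqref{e:SE}, namely $\mcl L f = h - \mu(h)$, where $\mcl L \varphi(x) = g(x) \cdot \nabla \varphi(x) + \tfrac12 \textrm{tr}(\sigma\sigma' \nabla^2 \varphi(x))$ is the generator of the SDE \eqref{hSDEg}. The essential input is Gurvich's regularity theory for Stein's equation associated with piecewise-linear-drift diffusions \cite{Gur1} (already invoked for Theorem~\ref{thm:DDE}), which provides the gradient bound $\|\nabla f\|_\infty \le C\|h\|_\infty$ together with $C^2$ control of $f$ away from the kink hyperplane $\{x \in \R^d : {\rm e}'x = 0\}$.

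Granted Itô's formula (see the obstacle below), I would apply it to $f(X^x_t)$ to obtain
\begin{eqnarray*}
f(X^x_t) - f(x) \ = \ \int_0^t \mcl L f(X^x_s)\,\dif s + M_t, \qquad M_t \ := \ \int_0^t \bigl(\sigma' \nabla f(X^x_s)\bigr)' \dif B_s,
\end{eqnarray*}
and substituting the Stein identity $\mcl L f = h - \mu(h)$ would yield the decomposition
\begin{eqnarray*}
\sqrt{t}\,\bigl[\mcl E^x_t(h) - \mu(h)\bigr] \ = \ \frac{f(X^x_t) - f(x)}{\sqrt{t}} \ - \ \frac{M_t}{\sqrt{t}}.
\end{eqnarray*}
The boundary term vanishes in probability: the Lipschitz continuity of $f$ coming from $\|\nabla f\|_\infty < \infty$, combined with the moment bound $\sup_{t \ge 0} \E|X^x_t|^2 < \infty$ (a consequence of the exponential ergodicity established in \cite{DG1}), forces $(f(X^x_t) - f(x))/\sqrt{t} \to 0$.

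For the martingale part, the predictable quadratic variation is
\begin{eqnarray*}
\langle M \rangle_t \ = \ \int_0^t |\sigma' \nabla f(X^x_s)|^2 \,\dif s.
\end{eqnarray*}
Since $|\sigma' \nabla f|^2$ is bounded, the exponential ergodicity in \cite{DG1} delivers the strong law $\langle M \rangle_t / t \to \mu(|\sigma' \nabla f|^2)$ almost surely, and the central limit theorem for continuous martingales with convergent normalised bracket then gives $M_t / \sqrt{t} \Rightarrow \mcl N\bigl(0, \mu(|\sigma' \nabla f|^2)\bigr)$, completing the weak-convergence assertion. The variance estimate $\mu(|\sigma' \nabla f|^2) \le C \|h\|_\infty^2$ is then immediate from $\|\sigma' \nabla f\|_\infty \le \|\sigma\|_{\textrm{op}} \|\nabla f\|_\infty \le C \|h\|_\infty$.

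The principal obstacle is that the Stein solution $f$ is only piecewise $C^2$ — with a possible loss of regularity across $\{{\rm e}'x = 0\}$ inherited from the kink of $g$ — so classical Itô's formula is not directly applicable. I would resolve this by mollification: set $f_\e := f * \rho_\e$ with $\rho_\e$ a smooth kernel, apply Itô's formula to $f_\e(X^x_t)$, and let $\e \to 0$, using Gurvich's uniform gradient (and one-sided Hessian) estimates together with the non-degeneracy hypothesis on $\sigma$, which guarantees that the occupation time of the kink hyperplane by $X^x$ is almost surely zero. Equivalently, an Itô--Krylov formula for $W^{2,p}_{\textrm{loc}}$ functions applied to the uniformly non-degenerate diffusion $X$ would handle the same issue once Gurvich's estimates are recast in Sobolev scale.
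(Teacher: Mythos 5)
Your overall strategy — decompose $\sqrt t\,[\mathcal E^x_t(h)-\mu(h)]$ through Stein's equation into a vanishing boundary term $\frac{f(X^x_t)-f(x)}{\sqrt t}$ and a martingale $M_t/\sqrt t$, then apply a martingale CLT — is the same as the paper's. But there is a genuine gap in the regularity input you feed into it, and it propagates into the two places where the real work lies.

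You assert that Gurvich's theory supplies the \emph{uniform} bound $\|\nabla f\|_\infty \le C\|h\|_\infty$. This is not available. First, the test function here is only $h\in\mathcal B_b(\R^d,\R)$, so Gurvich's regularity (Lemma~\ref{lem:Lipregf}), which is for Lipschitz $h$, does not apply; the relevant estimate is Lemma~\ref{lem:regf}, obtained by Bismut/Malliavin calculus, and it gives only
$|\nabla f(x)| \le C\|h\|_\infty (1+|x|^2)$, polynomial growth. (Even in the Lipschitz case Gurvich only gives $|\partial_i f(x)| \le C(1+|x|^2)$.) Consequently $|\sigma'\nabla f|^2$ is \emph{not} a bounded observable — it grows like $|x|^4$ — which undercuts two of your steps: (a) the variance bound $\mu(|\sigma'\nabla f|^2)\le C\|h\|^2_\infty$ is not ``immediate'' from a $\sup$-norm on $\nabla f$; it requires $\mu((1+|\cdot|^2)^2)<\infty$, i.e.\ a fourth-moment bound on the invariant measure (Lemma~\ref{lem:AV2}); and (b) your claim that ``exponential ergodicity delivers the strong law $\langle M\rangle_t/t\to \mu(|\sigma'\nabla f|^2)$ almost surely'' is no longer automatic, because the integrand is unbounded and one must control its moments along the trajectory. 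The paper sidesteps this by \emph{not} invoking the continuous-martingale-with-convergent-bracket CLT: it slices $[0,t]$ into unit intervals, sets $U_i=\int_{i-1}^i(\nabla f(X^x_s))'\sigma\,\dif B_s$, verifies the Lindeberg condition and $L^2$ convergence of $\frac{1}{\mu(|\sigma'\nabla f|^2)t}\sum_i|U_i|^2\to 1$ using the quadratic gradient bound and the Lyapunov moment estimates from Lemma~\ref{lem:AV2}, and then applies the McLeish/Sethuraman martingale CLT. Your route can be made rigorous, but it requires proving an ergodic theorem in $L^1$ or a.s.\ for an unbounded (quartic-growth) observable, which is more than ``exponential ergodicity'' alone gives you; the paper's $L^2$ computation is the substitute.

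Your concern about Itô's formula for $f$ is reasonable in principle (since $h$ is merely measurable and $g$ has a kink along $\{{\rm e}'x=0\}$), and the mollification/Itô--Krylov route you sketch would indeed close it; the paper applies Itô directly, implicitly relying on Lemma~\ref{prop:ST} having produced a genuine solution of the elliptic equation and on non-degeneracy of $\sigma\sigma'$. That point is thus not a flaw of your write-up relative to the paper — but the uniform gradient bound is, and it is what makes your martingale step look easier than it actually is.
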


\begin{theorem}[MDP]\label{thm:MDP}
For any $h\in \mathcal{B}_b(\R^d,\R)$, $x \in \R^d$ and measurable set $A\subset \R$, one has
	\begin{eqnarray*}
	-\inf_{z \in A^{ {\rm o} } } \frac{ z^2 }{ 2 \mu(| \sigma^{\prime}  \nabla f |^2)  }
	\ &\leq& \ 
	\liminf_{t\to\infty}\frac{1}{a_t^2}\log \mathbb{P} \left( \frac{\sqrt{t}}{a_t}  \left[\mcl E^x_t(h)-\mu(h)\right]    \in A \right)  \\
	\ &\leq& \
	\limsup_{t\to\infty}\frac{1}{a_t^2}\log \mathbb{P} \left( \frac{\sqrt{t}}{a_t}  \left[\mcl E^x_t(h)-\mu(h)\right]  \in A \right)
\ \leq \ -\inf_{z \in \bar{A}} \frac{ z^2 }{ 2 \mu(| \sigma^{\prime}  \nabla f |^2)  },
	\end{eqnarray*}
	where $\bar{A}$ and $A^{ {\rm o} }$ are the closure and  interior of set $A$, respectively, and $a_t$ satisfies $a_t \to \infty$ and $\frac{a_t}{\sqrt{t}} \to 0$ as $t\to \infty$ and $f$ is the solution to the Stein's equation \eqref{e:SE}.
\end{theorem}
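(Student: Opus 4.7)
The plan is to reduce the MDP for the empirical measure to an MDP for a continuous martingale via the Stein's equation, and then appeal to a Puhalskii-type MDP for martingales. The starting point is the Poisson-equation identity $\mathcal{L}f = h - \mu(h)$, where $\mathcal{L}$ is the infinitesimal generator of $(X_t)_{t\ge 0}$ and $f$ is the solution of the Stein's equation \eqref{e:SE}. Applying It\^o's formula to $f(X^x_t)$ and using that $\mathcal{L}f = h - \mu(h)$, one obtains
\begin{equation*}
\int_0^t \bigl[ h(X^x_s) - \mu(h) \bigr] \dif s \ = \ f(X^x_t) - f(x) - M_t, \qquad M_t \ := \ \int_0^t \bigl( \sigma^{\prime} \nabla f(X^x_s) \bigr)^{\prime} \dif B_s.
\end{equation*}
Dividing by $a_t \sqrt{t}$ gives the decomposition
$\tfrac{\sqrt t}{a_t}[\mcl E^x_t(h) - \mu(h)] = \tfrac{1}{a_t\sqrt t}[f(X^x_t) - f(x)] - \tfrac{M_t}{a_t\sqrt t}$.

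Next, I would show that the boundary term $\tfrac{1}{a_t\sqrt t}[f(X^x_t)-f(x)]$ is super-exponentially negligible at speed $a_t^2$. By the regularity of the Stein's equation established in \cite{Gur1} and invoked already for Theorem \ref{thm:CLT}, $\|f\|_\infty \le C\|h\|_\infty < \infty$, so this term is deterministically $O\bigl(1/(a_t\sqrt t)\bigr)$, hence tends to zero faster than any exponential rate. Therefore the MDP for $\tfrac{\sqrt t}{a_t}[\mcl E^x_t(h) - \mu(h)]$ is equivalent to the MDP for $-M_t/(a_t\sqrt t)$.

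To obtain the MDP for the continuous martingale $(M_t)_{t\ge 0}$, I would apply Puhalskii's criterion (continuous-time martingale MDP): it suffices to prove the super-exponential ergodic estimate
\begin{equation*}
\limsup_{t\to\infty} \frac{1}{a_t^2} \log \PP\!\left( \left| \frac{\langle M\rangle_t}{t} - \mu\bigl(|\sigma^{\prime}\nabla f|^2\bigr) \right| > \delta \right) \ = \ -\infty \quad \text{for every } \delta>0,
\end{equation*}
together with a bound on the jumps, which here is trivial since $M$ is continuous. Given $\langle M\rangle_t = \int_0^t |\sigma^{\prime}\nabla f(X^x_s)|^2 \dif s$ and the boundedness of $\sigma^{\prime}\nabla f$ (again from Gurvich's regularity), the above super-exponential law of large numbers is a standard consequence of the exponential ergodicity of $(X_t)_{t\ge 0}$ proved in \cite[Theorem 3]{DG1}; concretely, one combines a Lyapunov/drift-condition argument with a Chernoff-type bound, or invokes the Donsker--Varadhan upper bound at speed $t$ and contracts to speed $a_t^2 \ll t$. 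Once this estimate is in hand, the martingale MDP yields the rate function $z^2/\bigl(2\mu(|\sigma^{\prime}\nabla f|^2)\bigr)$, matching the claimed bounds in Theorem \ref{thm:MDP}.

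The main obstacle I anticipate is the verification of the super-exponential ergodic estimate for $\langle M\rangle_t/t$. The piecewise-linear drift $g$ is only Lipschitz (not smooth) and the state space is $\R^d$, so one cannot use classical uniform ellipticity on a compact manifold. Instead, one must carefully exploit the quadratic Lyapunov function implicit in the exponential ergodicity of \cite{DG1}, together with the boundedness of $\sigma^{\prime}\nabla f$ coming from \cite{Gur1}, to deduce the required exponential deviation estimate for the additive functional $\int_0^t |\sigma^{\prime}\nabla f(X^x_s)|^2\dif s - t\,\mu(|\sigma^{\prime}\nabla f|^2)$ at the moderate-deviation speed. The remaining ingredients (It\^o decomposition, regularity of Stein's solution, Puhalskii's theorem) are essentially off-the-shelf once this estimate is secured.
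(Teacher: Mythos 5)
Your approach is genuinely different from the paper's: you decompose via It\^o and the Stein equation into a martingale plus a boundary term, and you aim to invoke a Puhalskii-type martingale MDP, with the main work being a super-exponential law of large numbers for $\langle M\rangle_t/t$. The paper instead verifies a spectral-gap / simple-eigenvalue condition for $(P_t)$ (via the weighted-total-variation ergodicity of Lemma~\ref{lem:AV2} and \cite[Remark (2.17)]{WLM2}), applies Wu's ready-made MDP theorem \cite[Theorem 2.1]{WLM1} directly to get a large-deviation principle at speed $a_t^{-2}$ with rate $z^2/(2\mathcal{V}(h))$, and only afterwards uses It\^o plus the Stein equation to identify $\mathcal{V}(h) = \mu(|\sigma'\nabla f|^2)$ through a second-moment computation. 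The advantage of the paper's route is that the unboundedness of $f$ and $\nabla f$ is absorbed into the hypotheses of Wu's theorem (a Lyapunov-type ergodicity condition), rather than having to be controlled pathwise.

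However, your proposal has a genuine gap rooted in an incorrect regularity claim. You assert that $\|f\|_\infty \le C\|h\|_\infty < \infty$ and that $\sigma'\nabla f$ is bounded, attributing this to the Gurvich regularity. For $h\in\mathcal{B}_b(\R^d,\R)$ the relevant estimate is Lemma~\ref{lem:regf}, which gives only $|f(x)|\le C\|h\|_\infty(1+|x|^2)$ and $|\nabla f(x)|\le C\|h\|_\infty(1+|x|^2)$ — quadratic growth, not boundedness. (Gurvich's Lemma~\ref{lem:Lipregf} is for Stein's equation I with Lipschitz $h$ and also yields polynomial, not bounded, derivatives.) Consequently the boundary term $[f(X^x_t)-f(x)]/(a_t\sqrt t)$ is \emph{not} deterministically $O(1/(a_t\sqrt t))$, and showing it is super-exponentially negligible at speed $a_t^2$ requires a moderate-deviation tail bound on $|X^x_t|^2$. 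More seriously, the integrand $|\sigma'\nabla f(X^x_s)|^2$ in $\langle M\rangle_t$ grows like $|X^x_s|^4$, so the super-exponential LLN you need is for an unbounded additive functional; the ``standard consequence of exponential ergodicity'' you invoke does not apply off the shelf, and establishing it would require exponential-moment control compatible with the moderate-deviation speed. Until the polynomial growth of $f$ and $\nabla f$ is addressed explicitly, both halves of the Puhalskii-criterion verification remain open.
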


\begin{theorem}[CLT]\label{thm:EMCLT}
For any $h$ satisfying $|h|\leq V^{\ell}$ with some integer $\ell$ and $V$ be in \eqref{e:Lypfun} and for any initial distribution, 
$\sqrt{n} \big[ \mcl E_n^{\eta, \tl{X}_0^{\eta}}(h)  -  \tl{\mu}_{\eta}(h) \big]$ weakly converges to $\mathcal{N}(0, \sigma_h^2)$ as $n\to \infty$,
%     \begin{eqnarray*}
%\sqrt{n} [ \mcl E_n^{\eta, \tl{X}_0^{\eta}}(h)  -  \tl{\mu}_{\eta}(h)  ] \  \Rightarrow \ \mathcal{N}(0, \sigma_h^2), \qquad  {\rm \ as \ }  n\to \infty
%	\end{eqnarray*}
where 
     \begin{eqnarray}\label{e:sigmaf2}
\sigma_h^2  \ = \ {\rm var}_{\tl{\mu}_{\eta}} [ h(\tl{X}_0^{\eta}) ] + 2 \sum_{i=1}^{\infty} {\rm cov}_{\tl{\mu}_{\eta}} [ h(\tl{X}_0^{\eta}), h(\tl{X}_i^{\eta})].
	\end{eqnarray} 
\end{theorem}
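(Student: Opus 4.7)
The Markov chain $(\tl X_k^\eta)_{k\in\mathbb{N}_0}$ generated by the EM scheme \eqref{e:XD} is time-homogeneous, and by Theorem \ref{thm:DDE} admits the unique invariant measure $\tl\mu_\eta$ together with geometric ergodicity. Writing $P$ for its one-step transition operator, the plan is to use the classical Gordin-type Poisson equation / martingale decomposition, followed by the martingale CLT. This requires more than the $V$-geometric ergodicity already available from Theorem \ref{thm:DDE}: I will first establish higher-order Foster drift inequalities $PV^{m}\le\rho_m V^{m}+K_m$ with $\rho_m\in(0,1)$ for every sufficiently large integer $m$, which together with the minorization underlying Theorem \ref{thm:DDE} upgrade the ergodicity estimate to $\|P^k(x,\cdot)-\tl\mu_\eta\|_{V^\ell}\le CV^\ell(x)\rho^k$ for some $\rho\in(0,1)$.

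Define the Poisson solution
$$\hat h(x) \ := \ \sum_{k=0}^\infty \bigl[P^k h(x)-\tl\mu_\eta(h)\bigr],$$
which, using $|h|\le V^\ell$ and the $V^\ell$-ergodicity bound, converges absolutely, obeys $|\hat h|,|P\hat h|\le CV^\ell$, and solves $\hat h-P\hat h=h-\tl\mu_\eta(h)$. Setting $D_{k+1}:=\hat h(\tl X_{k+1}^\eta)-P\hat h(\tl X_k^\eta)$ produces a martingale difference sequence relative to the natural filtration, and a telescoping rearrangement yields
$$\sum_{k=1}^n\bigl[h(\tl X_k^\eta)-\tl\mu_\eta(h)\bigr] \ = \ \sum_{k=1}^n D_{k+1} \, + \, \hat h(\tl X_1^\eta)-\hat h(\tl X_{n+1}^\eta).$$
The boundary term divided by $\sqrt n$ tends to zero in probability via Markov's inequality and the uniform-in-$n$ moment bound $\sup_n\E[V^\ell(\tl X_n^\eta)]<\infty$ (a direct consequence of the drift inequality, applicable under any initial distribution with $\E[V^\ell(\tl X_0^\eta)]<\infty$), reducing the CLT to the one for $n^{-1/2}\sum_{k=1}^n D_{k+1}$.

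The martingale CLT applies once (i) the conditional quadratic variation $n^{-1}\sum_{k=1}^n[P\hat h^2(\tl X_k^\eta)-(P\hat h)^2(\tl X_k^\eta)]$ converges in probability to a deterministic constant and (ii) a conditional Lindeberg condition holds. Item (i) follows from the ergodic theorem, transferred from stationary initial law to arbitrary initial distribution by coupling with a stationary copy through the $V^\ell$-total-variation contraction, producing the limit $\tl\mu_\eta[\hat h^2-(P\hat h)^2]$; item (ii) is handled by the crude bound $|D_{k+1}|\le C[V^\ell(\tl X_{k+1}^\eta)+V^\ell(\tl X_k^\eta)]$ together with $\sup_n\E[V^{(2+\delta)\ell}(\tl X_n^\eta)]<\infty$ for some small $\delta>0$. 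To identify the limit with $\sigma_h^2$ in \eqref{e:sigmaf2}, write $\bar h:=h-\tl\mu_\eta(h)$; from $\hat h-P\hat h=\bar h$ one obtains $\hat h^2-(P\hat h)^2=\bar h(\hat h+P\hat h)$, and substituting the series expansions $\hat h=\sum_{k\ge 0}P^k\bar h$ and $P\hat h=\sum_{k\ge 1}P^k\bar h$ together with the stationarity identity $\tl\mu_\eta[\bar h\cdot P^k\bar h]={\rm cov}_{\tl\mu_\eta}[h(\tl X_0^\eta),h(\tl X_k^\eta)]$ recovers \eqref{e:sigmaf2} exactly. The main obstacle I anticipate is the verification of the higher-order drift inequalities for $V^m$ under the EM step \eqref{e:XD}; this rests on the piecewise-linear dissipativity of $g$ in \eqref{hSDEg} ensuring that the contractive drift dominates the polynomial inflation contributed by the Gaussian increment $\sqrt\eta\sigma\xi_{k+1}$ for sufficiently small $\eta$, and it is the pivotal ingredient that makes the $V^\ell$-polynomial control of $\hat h$ and of all the associated martingale differences go through.
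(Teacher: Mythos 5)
Your proposal is correct but takes a genuinely different route from the paper. The paper's proof of Theorem \ref{thm:EMCLT} is a one-line appeal to authority: having established geometric ergodicity of $(\tl X_k^\eta)$ in total variation and the uniform-in-$\eta$ moment bound $\tl\mu_\eta(V^\ell)\le C$ in Proposition \ref{p:GeneralErgodicEM}, the authors invoke Theorem~9 of Jones (\cite{jones2004markov}), which delivers the Markov-chain CLT with variance given precisely by \eqref{e:sigmaf2} for geometrically ergodic chains with enough moments. You instead reconstruct the result from scratch through the standard Gordin route: solve the discrete Poisson equation $\hat h-\tl{\mathcal P}_\eta\hat h=h-\tl\mu_\eta(h)$ via the series $\hat h=\sum_{k\ge 0}[\tl{\mathcal P}_\eta^k h-\tl\mu_\eta(h)]$, telescope to a martingale-difference array plus a negligible boundary term, apply the martingale CLT, and identify the variance. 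Your identity $\hat h^2-(\tl{\mathcal P}_\eta\hat h)^2=\bar h(\hat h+\tl{\mathcal P}_\eta\hat h)$ and the series substitution do recover \eqref{e:sigmaf2} correctly. The trade-off is that the paper's route is shorter but opaque (the reader must trust Jones' theorem, whose own proof is typically via regeneration or mixing), whereas yours is self-contained and structurally parallel to the paper's own proof of the continuous-time CLT (Theorem \ref{thm:CLT}), which is likewise based on the Poisson/Stein equation plus a martingale CLT. One remark: the ``main obstacle'' you anticipate, namely the higher-order Foster drift inequalities $\tl{\mathcal P}_\eta V^m\le\check\gamma_m V^m+\check K_m$ with $\check\gamma_m\in(0,1)$ uniformly for small $\eta$, is not actually an obstacle here because it is already proved in Lemma \ref{lem:PXeD} of the Appendix, and the induced $V^\ell$-weighted geometric ergodicity is recorded at the start of the proof of Theorem \ref{thm:EMMDP}; so those ingredients are available off the shelf and your argument closes without additional work.
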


\begin{theorem}[MDP]\label{thm:EMMDP}
For any $h\in \mathcal{B}_b(\R^d,\R)$, $x \in \R^d$ and measurable set $A\subset \R$, one has
	\begin{eqnarray*}
	-\inf_{z \in A^{ {\rm o} } } \frac{ z^2 }{2 \mathcal{V}(h)}
	\ &\leq& \ 
	\liminf_{n \to\infty}\frac{1}{a_n^2}\log \mathbb{P} \left( \frac{\sqrt{n}}{a_n}  \left[ \mcl E_n^{\eta,x}(h) - \tl{\mu}_{\eta}(h)\right]    \in A \right)  \\
	\ &\leq& \
	\limsup_{n \to\infty}\frac{1}{a_n^2}\log \mathbb{P} \left( \frac{\sqrt{n}}{a_n}  \left[ \mcl E_n^{\eta,x}(h) - \tl{\mu}_{\eta}(h)\right]  \in A \right)
\ \leq \ -\inf_{z \in \bar{A}} \frac{ z^2 }{ 2 \mathcal{V}(h) },
	\end{eqnarray*}
	where $\bar{A}$ and $A^{ {\rm o} }$ are the closure and  interior of set $A$, respectively,  and $a_n$ satisfies $a_n \to \infty$ and $\frac{a_n}{\sqrt{n}} \to 0$ as $n\to \infty$ and 
	     \begin{eqnarray}\label{e:EMVh}
\mathcal{V}(h) \ &=& \  \langle  (  h-\tl{\mu}_{\eta}(h)  )^2  \rangle_{\tl{\mu}_{\eta}} + 2\sum_{k=1}^{\infty} \langle \tl{\mathcal{P}}^k_{\eta} h, h-\tl{\mu}_{\eta}(h) \rangle_{\tl{\mu}_{\eta}}.
	\end{eqnarray}
\end{theorem}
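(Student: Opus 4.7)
The plan is to follow the classical martingale decomposition approach for additive functionals of geometrically ergodic Markov chains, combined with an MDP for martingale arrays. First, using the geometric $V$-ergodicity of $(\tl{X}_k^{\eta})_{k \in \mathbb{N}_0}$ that underlies Theorem \ref{thm:DDE} (via the Lyapunov function $V$ in \eqref{e:Lypfun}), for bounded measurable $h$ I would define the Poisson solution
\[
\hat h(x) \ := \ \sum_{k=0}^{\infty} \tl{\mathcal{P}}_\eta^k \bigl(h - \tl{\mu}_\eta(h)\bigr)(x),
\]
which converges in the $V$-weighted supremum norm and satisfies $\hat h - \tl{\mathcal{P}}_\eta \hat h = h - \tl{\mu}_\eta(h)$. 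The decomposition
\[
\sum_{k=1}^n \bigl[h(\tl{X}_k^{\eta,x}) - \tl{\mu}_\eta(h)\bigr] \ = \ M_n + \hat h(\tl{X}_0^{\eta,x}) - \hat h(\tl{X}_n^{\eta,x}),
\]
with $M_n := \sum_{k=1}^n \bigl[\hat h(\tl{X}_k^{\eta,x}) - \tl{\mathcal{P}}_\eta \hat h(\tl{X}_{k-1}^{\eta,x})\bigr]$ a martingale, reduces the problem to an MDP for $M_n/(\sqrt n\,a_n)$, since the boundary terms, controlled by $V(\tl{X}_0^{\eta,x}) + V(\tl{X}_n^{\eta,x})$, are exponentially negligible at the MDP scale $\sqrt{n}/a_n$.

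Next, I would invoke a martingale MDP (for instance, Puhalskii's theorem or Dembo--Zeitouni, Theorem 4.1) applied to $M_n/(\sqrt n\,a_n)$. Two ingredients are required. First, one needs $\frac{1}{n}\langle M\rangle_n \to \mathcal{V}(h)$ in $\PP$-probability; this follows from the ergodic theorem applied under $\tl{\mu}_\eta$ to the function $x \mapsto \tl{\mathcal{P}}_\eta(\hat h^2)(x) - (\tl{\mathcal{P}}_\eta \hat h)^2(x)$, and the dependence on the starting point $x$ is absorbed using $V$-uniform ergodicity to produce an exponential equivalence between the law started from $x$ and the stationary law at the MDP rate. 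Second, one needs a Lindeberg-type exponential bound
\[
\frac{1}{a_n^2} \log \sum_{k=1}^n \E\bigl[\exp\bigl(\lambda a_n |\Delta M_k|/\sqrt n\bigr)\bigr] \ \longrightarrow \ 0 \qquad \forall \lambda > 0,
\]
which follows from $|\hat h(x)| \le C_h V(x)$ together with uniform Gaussian-type tail control on $V(\tl{X}_k^\eta)$ under the Lyapunov drift condition already established for the EM scheme.

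Finally, identifying the limit variance is direct: under stationarity,
\[
\E_{\tl{\mu}_\eta}\bigl[\bigl(\hat h(\tl{X}_1^\eta) - \tl{\mathcal{P}}_\eta \hat h(\tl{X}_0^\eta)\bigr)^2\bigr] \ = \ \tl{\mu}_\eta(\hat h^2) - \tl{\mu}_\eta\bigl((\tl{\mathcal{P}}_\eta \hat h)^2\bigr),
\]
and substituting $\hat h = (h - \tl{\mu}_\eta(h)) + \tl{\mathcal{P}}_\eta \hat h$ together with the telescoping representation $\tl{\mathcal{P}}_\eta \hat h = \sum_{k\ge 1} \tl{\mathcal{P}}_\eta^k(h - \tl{\mu}_\eta(h))$ produces exactly $\mathcal{V}(h)$ in \eqref{e:EMVh}. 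The hardest step will be verifying the Lindeberg exponential condition uniformly in the non-stationary starting point: although $h$ is bounded, $\hat h$ is only $V$-bounded, so sharp enough exponential moment bounds on $V(\tl{X}_k^\eta)$ are needed under all relevant laws, which in turn requires iterating the uniform-in-$\eta$ Lyapunov drift estimate from the proof of Theorem \ref{thm:DDE} and combining it with a geometric tightness argument to secure exponential equivalence at the moderate deviation scale.
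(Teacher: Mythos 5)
Your proposal takes a genuinely different route from the paper. The paper proves Theorem~\ref{thm:EMMDP} by a spectral argument: it uses Proposition~\ref{p:GeneralErgodicEM} to get $\|\tilde{\mathcal{P}}_\eta^n(x,\cdot) - \tilde\mu_\eta\|_{1+V}\le C\eta^{-1}e^{-cn\eta}$, invokes Wu's remark (\cite[Remark (2.17)]{WLM2}) to deduce a spectral gap near the eigenvalue $1$ in the $V$-weighted space, verifies that $1$ is a simple eigenvalue and the only one on the unit circle (via the argument that any unimodular eigenfunction must be constant by ergodicity), and then applies \cite[Theorem 2.1]{WLM1} as a black box to conclude the MDP directly with the variance $\mathcal{V}(h)$ of \eqref{e:EMVh}. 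In contrast, you re-derive the MDP from scratch via the Poisson solution $\hat h$, a martingale decomposition, and a martingale MDP. Your variance identification via $\hat h = (h-\tilde\mu_\eta(h)) + \tilde{\mathcal{P}}_\eta\hat h$ is correct and recovers \eqref{e:EMVh}, and the overall architecture is the standard constructive companion to Wu's spectral criterion.

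There are, however, two genuine gaps in your route that the paper's black-box approach avoids. First, since the chain is only $V$-uniformly (not uniformly) ergodic on $\R^d$, the Poisson solution is $V$-bounded, $|\hat h| \le C_h(1+V)$, not bounded. Exponential negligibility of the boundary terms $\hat h(\tl X_0^{\eta,x})-\hat h(\tl X_n^{\eta,x})$ at the MDP scale then requires, for every $M>0$, that $\PP(V(\tl X_n^\eta) > \epsilon\sqrt{n}\,a_n)=o(e^{-Ma_n^2})$. Because $a_n$ can grow as slowly as one likes, the polynomial moment bound $\tilde\mu_\eta(V^\ell)\le C_\ell$ from Proposition~\ref{p:GeneralErgodicEM} is not sufficient (Markov's inequality only yields logarithmic, not super-exponential, decay at this scale); one would need a sub-exponential moment $\E e^{\lambda V(\tl X_n^\eta)}<\infty$ for some $\lambda>0$ uniformly in $n$, which the paper does not establish and does not follow from iterating the existing $V$-drift bound. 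You flag this as the hardest step, but the proposed remedy (iterating the Lyapunov estimate and a geometric tightness argument) does not upgrade polynomial to exponential integrability. Second, for a martingale MDP one needs more than $\frac{1}{n}\langle M\rangle_n \to \mathcal{V}(h)$ in probability: the standard conditions (Puhalskii; Dembo--Zeitouni) require a super-exponential estimate of the form $\limsup_n \frac{1}{a_n^2}\log\PP(|\frac1n\langle M\rangle_n - \mathcal{V}(h)|>\epsilon) = -\infty$, together with a super-exponential Lindeberg condition. These in turn need the same exponential integrability of $V$. The paper sidesteps both issues entirely by working in the weighted-operator framework where the spectral gap and simplicity of the unit eigenvalue are the only inputs required by Wu's theorem.
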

\begin{remark} 
We shall see below that Stein's equation will play an important role in proving Theorems \ref{thm:DDE}, \ref{thm:CLT} and \ref{thm:MDP}.  As $\eta \rightarrow 0$ and $n \eta \rightarrow \infty$, we expect that the CLT in Theorem \ref{thm:EMCLT} holds  with the variance $\sigma^2_h$ replaced with $\mu(| \sigma^{\prime}  \nabla f |^2)$, however, the issue of exchanging limits of $\eta$ and $n$ seems very hard. When $h \in \mathcal{C}^2_b(\R^d, \R)$ and $g$ is second order differentiable and strongly dissipative, \cite{lu2020central} gives a proof of this conjecture.
\end{remark}

\subsection{Related works}

Our paper is relevant to the following four streams of works in the literature.

{\it (a) Steady state analysis of many-server queues.}
A few significant results have been obtained for understanding the steady-state of many-server queues, see, e.g., \cite{APS1,AHP1,AHPS1,GG13a,GG13b,AR20,Gur1,BD1,BDF} and references therein.
The most relevant to us is the recent development using Stein's method to analyze the steady state of queueing processes via diffusion approximations.
Gurvich \cite{Gur1} provides a framework of analyzing the steady states via direct diffusion approximations (rather than diffusion limits) for a family of continuous-time exponentially ergodic Markov processes with state spaces,
in particular, the gap between the steady-state moments of the diffusion models and those of the Markov processes is characterized.  This result can be applied to Markovian many-server queueing systems.
Braverman et al.  \cite{BDF} introduced the Stein's method framework formally, proving Wasserstein and Kolmogorov distances between the steady-state distributions of the queueing processes and approximate diffusion models, and applied to the classical Erlang A and C models, where the bound is characterized by the system size.
Braverman  and Dai \cite{BD1} then extended this approach for the $M/Ph/n+M$ queues.
 Braverman et al. \cite{BDF20}  recently studied high order steady-state approximations of 1-dimensional Markov chains and applied to Erlang C models.
As discussed before, invariant measure of the diffusion limit of the $M/Ph/n+M$ queues lacks an explicit expression and is difficult to compute directly. In this work, we provide a stochastic algorithm to compute the invariant measures for an approximate diffusion model of the $M/Ph/n+M$ queue.
Unlike the work \cite{BD1}, our work characterizes the non-asymptotic error bound in terms of the step size in the algorithm.

It is worth noting that for models with the one-dimensional diffusion approximations, since the invariant measure has an explicit density, when studying steady state approximation problem by Stein's method, one can explicitly solve Stein's equation and obtain the desired regularity properties easily. That is very similar to the one dimensional normal approximation case. It is well known that the generalization of Stein's method from one to multi-dimensional approximations is highly nontrivial \cite{CM08,RR-09}.
 Our problem is a multidimensional diffusion approximation.

{\it (b) Error estimates of EM schemes for diffusions.}
Let us recall the results concerning the error estimates between the ergodic measures of SDEs and their EM scheme.
For the ease of stating and comparing the results in the literatures below, we denote in this subsection by  $(Y_{t})_{t \ge 0}$ and $(\tl{Y}_{n})_{n \in \N_{0}}$ the stochastic processes associated to SDEs and their EM scheme respectively, and by $\pi_{sde}$ and $\pi_{em}$ their ergodic measures respectively.

There have been many results concerning the error estimates between the ergodic measures of $(Y_{t})_{t \ge 0}$ and $(\tl{Y}_{n})_{n \in \N_{0}}$, see for instance \cite{BDMS1, BCR1, BBC1, DM1, DM2, KT1, LP1, P1, P2, T3}, but most of them are
asymptotic type.  For asymptotic results, we recall those in the literatures \cite{GHL1, P1, P2,BCR1} whose settings are close to ours. \cite{GHL1, P1, P2} considered an empirical measure $\Pi^{n}_{em}$ of $\pi_{em}$ for a class of SDEs driven by multiplicative L\'evy noises,  showing that $\frac1{\sqrt{\Gamma_n}}(\Pi^{n}_{em}(f)-\pi_{sde}(f))$ converges to a normal distribution as $n \rightarrow \infty$ for $f$ in a certain high order differentiable function family ($\Gamma_n$ has the same order as $n$), while a similar type CLT was obtained in \cite{BCR1} for a reflected SDE driven arising in queue systems. All these works need strong dissipation and high order differentiability conditions on the drift of SDEs, which do not hold in our queue systems.

Among the few non-asymptotic results, the works in \cite{BDMS1,DM1,DM2}, arising from the Langevin dynamics sampling, are probably most close to ours.
Due to the distribution sampling motivation, their SDEs are gradient systems, i.e., the drift is the gradient of a potential $U$, thus analysis tools such as concentration inequalities are available.
Under certain conditions on the drift, they proved non-asymptotic bounds for total variation or Wasserstein-2 distance between $\pi_{sde}$ and $\pi_{em}$, their analysis heavily depends on the gradient form of the drift, and is not easily seen to be extended to a non-gradient system. Our SDE is not a gradient system in that its drift $g(x)$ can not be represented as a gradient of a potential, what is worse is that $g(x)$ is even not differentiable.

\cite{BBC1,T3} gave non-asymptotic results for the difference between the law of $\tl{Y}_n$ and $\pi_{sde}$ for large $n$. Most of these works need strong dissipation and high order differentiability assumptions on the drift of SDE, and their estimates are in the form
$|\E h(\tl{Y}_n)-\pi_{sde}(h)|$ or $|\frac{1}n \sum_{i=1}^n h(\tl{Y}_i)-\pi_{sde}(h)|$ for $h$ in a certain high order differentiable function family, from which one usually cannot derive a bound between the law of $\tl{Y}_n$ and $\pi_{sde}$ in a Wasserstein type distance.

%{\it (c) Stein's method and Mallivian calculus for diffusions.}
%In the very recent work \cite{FSX1}, the third author et al. developed a strategy for using Stein's method and Malliavin calculus to study diffusion approximations. We will follow the strategy in this paper, but a direct application of the Malliavin calculus in \cite{FSX1} will lead to a loose error bound. Thanks to the regularity estimates developed by \cite{Gur1,BD1}, we can avoid using Malliavin calculus to study Stein's equation.

{\it (c) CLT and MDP with respect to ergodic measures.}  \cite{DG1} proved that SDE (\ref{hSDEg}) is exponentially ergodic with ergodic measure $\mu$. % but did not provide the properties of $\mu$. 
Inspired by the work \cite{WLM1, WXX1}, we will establish the CLT and MDP with respect to $\mu$, in which the related variance can be determined by solving Stein's equation. Because the test functions $h$ in Eq. \eqref{e:SE} are in $\mathcal B_b(\R^d,\R)$, we need to apply Malliavin calculus to study the regularity of the solution. %Furthermore, we will establish Harnack inequality by a coupling argument developed by Wang \cite{WFY1} and show that the support of $\mu$ is $\R^d$. 
We also prove the CLT and MDP of the EM scheme, there exist very few results for studying CLT and MDP of EM scheme, see \cite{fukasawa2020efficient,lu2020central}.

\subsection{Organization of the paper}

In the remainder of this section, we introduce notations which will be frequently used. Sections \ref{sec-proof strategy}, \ref{s:CLTMDP} and \ref{sec:EMCLTMDP} give the proofs of our main results, in which several auxiliary propositions will be proved in Appendix \ref{App:GeneralErgodicEM}. In Section \ref{Malliavin-Stein},
we introduce two Stein's equations with different test functions $h$, and use the Malliavin calculus to get the regularity for solution when $h\in \mathcal{B}_b(\R^d,\R)$. % The Harnack inequality is established in Section \ref{sec:HI}, and plays an important role in proving the support of the invariant measure of the limiting diffusion $(X_t)_{t\geq 0}$. 
The moment estimate for a weighted occupation time are in Section \ref{app:occupation}. Some proofs for auxiliary lemmas are in Appendix \ref{sec:AAS}.

\subsection{Notations}
Let $\R$ and $\mathbb{C}$ be real numbers and complex numbers respectively. The Euclidean metric is denoted by $| \cdot |$. For matrixes $A=(A_{ij})_{d\times d}$ and $B=(B_{ij})_{d\times d}$, denote $\langle A, B \rangle_{ {\rm HS} }=\sum_{i,j=1}^d A_{ij} B_{ij}$ and Hilbert Schmidt norm is $ \| A \|_{  {\rm HS} } = \sqrt{ \sum_{i,j=1}^d A_{ij}^2   } $ and operator norm is $\| A \|_{ {\rm op} } = \sup_{  |u|=1 } |Au|$.
We write a symmetric matrix $A>0\, (A<0)$ if $A$ is a positive (negative) definite matrix, and write $A\ge 0 \, (A \leq0)$ if $A$ is a positive (negative) semi-definite matrix. $\langle x, y \rangle$ means the inner product, that is, $\langle x, y \rangle = x^{\prime} y$ for $x,y\in \R^d$. $\otimes$ is the outer product, that is, for vector $u=(u_1,\cdots,u_d)$ and matrix  $A=(A_{ij})_{d\times d}$, then $(u \otimes A)_{ijk}=u_{i}A_{jk}$ for $1 \le i, j, k \le d$.

$\mathcal{C}^k(\R^d, \R_+)$ means $\R_+$-valued $k$-times continuous derivatives functions defined on $\R^d$ with $k\in \N$ and $\R_+=[0,\infty)$. $ \mathcal{C}_b(\R^d, \R)$ is $\R$-valued continuous bounded functions defined on $\R^d$. $\mathcal{B}_b(\R^d, \R )$ means $\R$-valued Borel bounded measurable functions defined on $\R^d$. Denote $\| f \|_{\infty}= {\rm ess}\sup_{x\in \R^d} |f(x)|$ for $f\in\mathcal{B}_b(\R^d, \R )$. For $f\in \mathcal{C}^2(\R^d, \R)$, denote $\nabla f=(\partial_1 f, \partial_2 f, \cdots, \partial_d f) \in \R^d$ and $\nabla^2 f= (   \partial_{ij} f)_{1\leq i,j \leq d} \in \R^{d\times d}$ the gradient and Hessian matrix for function $f$. For $f\in \mathcal{C}^1(\R^d, \R)$ and $u, x\in \R^d$, the directional derivative $\nabla_{u} f (x)$ is defined by
\begin{eqnarray*}
\nabla_{u} f(x) &=& \lim_{\e_1 \to 0} \frac{f(x+\e_1 u) - f(x)}{\e_1}.
\end{eqnarray*}
We know $\nabla f (x) \in \R^d$ for each $x \in \R^{d}$ and $\nabla_{u} f (x) =\langle \nabla f(x), u \rangle$. For $f \in \mathcal{C}^2(\R, \R)$, $\dot{f}$ and $\ddot{f}$ are the first and second derivatives of function $f$ respectively. For any probability measure $\nu$, denote $\nu(f) = \int f(x) \nu(\dif x)$.

$B(y,r)$ means the ball in $\R^d$ with centre $y \in \R^d$ and radius $r>0$, that is, $B(y,r)=\{z\in \R^d:|z-y|\leq r\}$.

$\mathcal{N}(a,A)$ with $a\in \R^d$ and $A\in \R^{d\times d}$ denotes Gaussian distribution with mean $a$ and covariance matrix $A$.

A sequence of random variables $\{Y_n, n\geq 1 \}$ is said to converge weakly or converge in distribution to a limit $Y_{\infty}$, that is, $Y_n \Rightarrow Y_{\infty}$ if $\lim_{n \rightarrow \infty} \E f(Y_{n})=\E f(Y_{\infty})$ for all bounded continuous function  $f$. In addition,  $Y_n \stackrel{p}{\longrightarrow} Y_{\infty}$ means convergence in probability, namely, $\lim_{n\to \infty} \PP(|Y_n - Y_{\infty}|>\delta)=0$ for all $\delta \geq 0$. $Y_n \stackrel{L^p}{\longrightarrow} Y_{\infty}$ means the $L^p$ convergence, that is, $\lim_{n\to \infty}\E |Y_n-Y_{\infty}|^p=0$.

Denote $X^x_t$ the process $X_t$ given $X_0=x$.  Denote by $P_{t}(x,\cdot)$ the transition probability of $X_{t}$ given $X_{0}=x$. Then the associated Markov semigroup $(P_t)_{t\ge 0}$ is given by,  for all $x\in\R^d$ and $f\in\mathcal{B}_b(\R^d,\R )$
\begin{eqnarray*}
P_tf(x) 
\ = \ \E f(X^x_t)
\  = \ \int_{\R^{d}} f(y) P_{t}(x, \dif y), \quad \forall t\ge0.
\end{eqnarray*}
The generator $\mathcal{A}$ of $(X_t)_{t\geq 0}$ is given by, for $y\in\R^d$,
\begin{eqnarray}\label{e:A}
\mathcal{A} f(y) \ = \ \langle \nabla f(y), g(y)\rangle+\frac{1}{2} \langle \sigma\sigma^{\prime}, \nabla^2 f(y) \rangle_{{\rm HS}}, \quad f \in \mathcal{D}(\mathcal{A}),
\end{eqnarray}
where $\mathcal{D}(\mathcal{A})$ is the domain of $\mathcal{A}$, whose exact form is determined by the function space where the semigroup $(P_{t})_{t \ge 0}$ is located.

As $\tl{X}_{0}^{\eta}=x$, we denote $\tl{X}_{k}^{\eta,x}$ for the $k$-th step iterative. Denote by $\tilde{\mathcal{P}}_{\eta}(x,\cdot)$ the one step transition probability for the Markov chain $(\tl{X}_k^{\eta})_{k\in \mathbb{N}_0}$ with $\tl{X}_0^{\eta}=x$, that is, for $f\in \mathcal{B}_b(\R^d,\R )$ and $x\in \R^d$, one has
\begin{eqnarray*}
\tilde{\mathcal{P}}_{\eta}f(x) \ = \  \int_{\R^d} f(y)\tl{\mathcal{P}}_{\eta}(x,\dif y),
\end{eqnarray*}
and denote $\tl{\mathcal{P}}_{\eta}^k=\tl{\mathcal{P}}_{\eta} \circ  \tl{\mathcal{P}}_{\eta}^{k-1}$ for integers $k\geq 2$.

We denote by $\E^{\mu}$ the conditional expectation given that $X_{0}$ has a distribution $\mu$. If $\mu=\delta_{x}$, we write $\E^{x}=\E^{\delta_{x}}$. ${\rm var}_{\mu}(\cdot)$, ${\rm cov}_{\mu}[\cdot,\cdot]$ and $\langle \cdot, \cdot \rangle_{\mu}$ mean the expectations are respected to $\E^{\mu}$, that is, ${\rm var}_{\mu}(A) = \E^{\mu} [|A-\E A|^2]$ , ${\rm cov}_{\mu}[A,B] = \E^{\mu} [|A-\E A|\cdot |B-\E B|]$ and $\langle A, B \rangle_{\mu} = \E^{\mu} [ \langle A, B \rangle  ]$. $\E_{\mathbb{P}}$ and $\E_{\mathbb{Q}}$ mean expectations under probability spaces $\mathbb{P}$ and $\mathbb{Q}$ respectively.

%\begin{definition}(\cite[Definition 1.9  and Definition 2.5]{MH1})
%A Markov operator $P_t$ has the strong Feller (or Feller) property if, for every function $f \in \mathcal{B}_b(\R^d, \R)$ (or $f \in \mathcal{C}_b(\R^d, \R)$), one has $P_tf \in \mathcal{C}_b(\R^d, \R)$. For the Markov semigroup $(P_t)_{t \ge 0}$ associated with a Markov process $(X_t)_{t \ge 0}$, if $P_t$ is strong Feller (or Feller) for every $t>0$, then we say that the process $(X_t)_{t \ge 0}$ is strong Feller (or Feller).
%\end{definition}
%
%
%\begin{definition}(\cite[p. 138]{Q1})
%A Markov process $(X_t^x)_{t\geq 0}$ starting from $x$ is called irreducible on $\R^d$ if for any $t>0$, we have $\PP (X_t^x \in B(y,r) )>0$ with arbitrary $x,y\in \R^d$ and $r>0$.
%\end{definition}
%
%
%\begin{definition}\label{def:qua}( \cite[Definition 1.4.1]{WFY1})
%Let $\mu$ be a probability measure on $(\R^d, \mathcal{B}(\R^d))$, and $P_{t}$ with $t\in\R_+$ be a bounded linear operator on $\mathcal{B}_b(\R^d,\R)$. A measurable function $\textbf{p}$ on $\R^{2d}$ is called the kernel or density of $P_{t}$ with respect to $\mu$ if
%\begin{eqnarray*}
%P_{t} f &=& \int_{\R^d} \textbf{p} (t;\cdot, y) f(y) \mu(\dif y), \quad f\in \mathcal{B}_b(\R^d,\R).
%\end{eqnarray*}
%\end{definition}

Recall that the following measure distances.

The Wasserstein-1 distance between two probability measures $\mu_1$ and $\mu_2$ is defined as (\cite[p. 2056]{HM1})
\begin{eqnarray}\label{e:dW}
d_{W}(\mu_1,\mu_2)
&=& \sup_{h \in {\rm Lip(1)}}\left\{\int h(x) \mu_1 (\dif x) - \int h(x) \mu_2 (\dif x) \right \} \nonumber   \\
&=& \sup_{h \in {\rm Lip_0(1)}} \left \{\int h(x) \mu_1 (\dif x) - \int h(x) \mu_2 (\dif x) \right \} \nonumber \\
&=& \sup_{h \in {\rm Lip(1)}} \left \{\int h(x) \mu_1 (\dif x) - \int h(x) \mu_2 (\dif x), \ \ |h(x)| \le |x| \right \}, 
\end{eqnarray}
where ${\rm Lip(1)}$ is the set of Lipschitz function with Lipschitz constant $1$, that is,  ${\rm Lip(1)}=\{ h: |h(x)-h(y)|\leq |x-y|$ for all $x,y \in \R^d \}$, and ${\rm Lip_0(1)}:=\{h \in {\rm Lip(1)}: h(0)=0\}$.

The total variation distance (\cite[p. 57]{MH2}) between two measures $\mu_1$, $\mu_2$ is defined by
\begin{eqnarray*}
\|\mu_1-\mu_2\|_{\rm{TV}}
&=& \sup_{ \substack{h \in \mathcal{B}_b(\R^d, \R), \,  \| h \|_{\infty}\leq1}  }  \left \{ \int_{\R^d} h(x) \mu_1(\dif x) - \int_{\R^d} h(x) \mu_2(\dif x) \right \} .
\end{eqnarray*}
Let $V:\R^d \rightarrow \R_+$ be a measurable function, define a weighted supremum norm on measurable functions (\cite[p. 57]{MH2}) by
\begin{eqnarray*}
\| \varphi \|_{V} &=& \sup_{x\in \R^d } \frac{ | \varphi(x) | }{ 1+ V(x) },
\end{eqnarray*}
as well as the dual norm of measures by
\begin{eqnarray*} %\label{e:TVV}
\|\mu_1-\mu_2 \|_{\rm{TV}, \rm{V}}
&=& \sup \left\{  \int \varphi(x) \mu_1(\dif x) -\int \varphi(x) \mu_2(\dif x) :  \| \varphi \|_V \leq 1\right\}.
\end{eqnarray*}

An alternative expression for the weighted total variation norm is given by
\begin{eqnarray}\label{normTVV}
\|\mu_1-\mu_2\|_{\rm{TV}, \rm{V}}
&=&  \int_{\R^d} (1+V(x)) |\mu_1-\mu_2 | (\dif x),
\end{eqnarray}
where $\mu_1-\mu_2$ is a signed measure and $|\mu_1-\mu_2|$ is the absolute value of $\mu_1-\mu_2$. Under $V\ge 0$, one has the relation $\|\mu_1-\mu_2\|_{\rm{TV}} \leq \|\mu_1-\mu_2\|_{\rm{TV}, \rm{V}} $. If $1+V(x) \ge 1+c |x|^2 \ge c'|x|$ for some constants $c, c'>0$, it follows from \eqref{e:dW} and \eqref{normTVV} that there exists  a constant $C>0$ such that
\Be \label{e:dWandTV}
d_W(\mu_1,\mu_2) \ \leq \ C  \|\mu_1-\mu_2\|_{\rm{TV}, \rm{V}}.
\Ee

Let $P_t^*$ be the dual operator of $P_t$ for all $t\geq 0$, that is, for some measurable set $A$ and measure $\mu_1$, one has
\begin{eqnarray*}
(P_t^* \mu_1 )(A) \ = \ \int_{\R^d} P_t(x,A) \mu_1(\dif x).
\end{eqnarray*}

We use the letter $C$ to represent a positive constant, which may be different from line to line. Denote
\begin{eqnarray}
C_{\rm op} \ & = & \  \|R\|_{\rm op}+\|(R-\alpha I)p {\rm e}'\|_{\rm op}, \label{Cop}  \\
\tl{C}_{\rm op} \ & = & \ C_{\rm op}+\| \sigma\sigma^{\prime} \|_{\rm HS}+1+\|R-\alpha I\|_{\rm op}+|\beta|, \label{tlCop}  \\
C_m \ & = & \ 2m^2\tl{C}_{{\rm op}} {\rm \ for \ integers \ } m\geq 2. \label{Cm}
\end{eqnarray}

\section{Stein's equations and Malliavin calculus} \label{Malliavin-Stein}

We introduce in this section two Stein's equations, one for proving Theorem \ref{thm:DDE} and the other for proving Theorems \ref{thm:CLT} and \ref{thm:MDP}. The regularity of the first equation has been established by Gurvich \cite[Theorem 4.1]{Gur1} (see also \cite[Lemma 1]{BD1}), while that of the second equation will be established by Malliavin calculus because test functions $h$ are in $\mathcal{B}_b(\R^d,\R)$ instead of being Lipschitz. A weighted occupation time $L^{\e,x}_t$ captures the non-differentiability of $g$ at zero when applying Malliavin calculus.

\subsection{Stein's equation I}
We consider the following Stein's equation: for a Lipschitz function $h: \R^d \to \R $ with $\| \nabla h \|_{\infty}<\infty$,
\begin{eqnarray}\label{e:PoiLip}
\mathcal{A} f(x) \ = \ h(x) - \mu(h),
\end{eqnarray}
where $\mathcal{A}$ is defined as in \eqref{e:A}, and $\mu$ is the invariant measure for the process $(X_t)_{t\geq 0}$ in \eqref{hSDEg} with semigroup $(P_{t})_{t\geq 0}$.  Without of loss generality, we assume that $h\in {\rm Lip}_0(1)$. Then we can get the regularity for the solution to Stein's equation \eqref{e:PoiLip} from Gurvich \cite[Theorem 4.1]{Gur1} (see also \cite[Lemma 1]{BD1}).

\begin{lemma}\label{lem:Lipregf}
For $f$ in \eqref{e:PoiLip}, there exists some positive constant $C$ such that
\begin{eqnarray}
|f(x)| \ &\leq& \ C(1+|x|), \nonumber  \\
|\partial_i f(x)| \ &\leq& \ C(1+|x|^2), \nonumber  \\
|\partial_{ij} f(x)| \ &\leq& \ C(1+|x|^3), \label{e:2f} \\
\sup_{y\in \R^d: |y-x|<1} \frac{ | \partial_{ij} f(y)-\partial_{ij} f_(x)| }{|y-x|} \  &\leq& \ C(1+|x|^4),  \label{e:3f}
\end{eqnarray}
where $\nabla f=(\partial_1 f, \partial_2 f, \cdots, \partial_d f) \in \R^d$ and $\nabla^2 f= (   \partial_{ij} f )_{1\leq i,j \leq d} \in \R^{d\times d}$ are the gradient and Hessian matrix for $f$, respectively.
\end{lemma}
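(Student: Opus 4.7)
The plan is to represent $f$ by the integral
$$f(x) \ = \ -\int_0^{\infty} \bigl[P_t h(x) - \mu(h)\bigr] \dif t,$$
show that this is well defined, verify that it solves \eqref{e:PoiLip}, and then differentiate under the integral sign to extract the four bounds. For the size estimate $|f(x)| \le C(1+|x|)$ I would invoke the exponential ergodicity of $(X_t)_{t\ge 0}$ proved in \cite{DG1}: for $h \in {\rm Lip}_0(1)$, a synchronous coupling argument combined with the dissipativity coming from $R$ yields $|P_t h(x) - \mu(h)| \le C \exp(-\lambda t)(1+|x|)$, which integrates in $t$ to the desired bound without requiring any differentiability of $h$ or $g$.

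For the derivative estimates, the natural object is the first-variation (Jacobian) process $J^x_t = \nabla_x X^x_t$; were $g$ smooth this would satisfy $\dif J^x_t = \nabla g(X^x_t) J^x_t \dif t$. Since $\sigma$ is constant and invertible, a Bismut--Elworthy--Li integration by parts (analogous in spirit to the Malliavin computation that appears in Section \ref{Malliavin-Stein}) represents $\partial_i P_t h(x)$, $\partial_{ij} P_t h(x)$, and the finite differences $\partial_{ij} P_t h(y) - \partial_{ij} P_t h(x)$ as expectations of $h(X^x_t)$ multiplied by stochastic weights built from $J^x_t$ and $\sigma^{-1}$. Because these weights involve increasingly high powers of the Jacobian as one differentiates further, Gr\"onwall-type control of $|J^x_t|$ together with the moment bound $\E|X^x_t|^k \le C_k(1+|x|^k)$ costs one extra factor of $(1+|x|)$ per derivative. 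Combining these pointwise-in-$t$ estimates with the exponential decay of $P_th(x) - \mu(h)$ and integrating over $t \in (0,\infty)$ produces the claimed polynomial bounds \eqref{e:2f}--\eqref{e:3f}.

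The main obstacle, and the reason the result is genuinely nontrivial, is that $g$ fails to be differentiable on the hyperplane $\{x:{\rm e}'x=0\}$, so neither the classical first-variation equation nor the Bismut formula is literally valid. The resolution, carried out by Gurvich in \cite[Theorem 4.1]{Gur1} and adapted by Braverman--Dai in \cite[Lemma 1]{BD1}, rests on two observations: (a) $g$ is globally Lipschitz and piecewise linear, so the flow and its Jacobian exist in an almost everywhere sense and the first-variation equation holds off a null set; (b) the uniform non-degeneracy $\xi'\sigma\sigma'\xi \ge c|\xi|^2$ forces the Lebesgue measure of $\{t : {\rm e}'X^x_t = 0\}$ to vanish almost surely, so the singular set of $\nabla g$ is invisible to the smoothing produced by the semigroup. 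Once these two facts are verified for our generator $\mathcal A$ and invariant measure $\mu$, the four inequalities follow directly by applying Gurvich's theorem.
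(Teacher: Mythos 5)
Your proposal is correct and lands in the same place as the paper: Lemma~\ref{lem:Lipregf} is not proved in this paper at all, but is quoted directly from Gurvich~\cite[Theorem~4.1]{Gur1} (see also Braverman--Dai~\cite[Lemma~1]{BD1}), which is precisely the final step of your argument. Your preliminary sketch (integral representation of $f$, Bismut--Elworthy--Li differentiation, Jacobian control, and the zero-occupation-time argument to handle the non-differentiability of $g$ on $\{{\rm e}'x=0\}$) is a reasonable paraphrase of what Gurvich's proof does, though none of it needs to be re-derived here since the hypotheses of \cite[Theorem~4.1]{Gur1} (uniform ellipticity of $\sigma\sigma'$, global Lipschitz and piecewise-linear drift, and the Lyapunov/ergodicity structure from~\cite{DG1}) are already verified in Appendix~\ref{App:GeneralErgodicEM}.
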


\subsection{Stein's equation II} 
We consider the following Stein's equation: for $h\in \mathcal{B}_b(\R^d,\R)$
\begin{eqnarray}\label{e:SE}
\mathcal{A} f(x) \ = \ h(x) - \mu(h),
\end{eqnarray}
where $\mathcal{A}$ is defined as in \eqref{e:A}, and $\mu$ is the invariant measure for the process $(X_t)_{t\geq 0}$ in \eqref{hSDEg} with semigroup $(P_{t})_{t\geq 0}$. 

\begin{lemma}\label{prop:ST}
For function $h\in \mathcal{B}_b(\R^d,\R)$, one has

(i) A solution to \eqref{e:SE} is given by
\begin{eqnarray}\label{e:SE1}
f(x) \ = \ - \int_0^{\infty} P_t[   h(x) - \mu(h) ] \dif t.
\end{eqnarray}

(ii) The solution to \eqref{e:SE} is also given by
\begin{eqnarray*} %\label{e:SE3}
f(x) \ = \  \int_0^{\infty} e^{-\lambda t} P_t[  \lambda f(x) - h(x) + \mu(h) ] \dif t,  \quad  \forall  \lambda>0.
\end{eqnarray*}
\end{lemma}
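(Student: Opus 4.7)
The plan is to establish both items by standard semigroup manipulations, leveraging the $V$-uniform exponential ergodicity of $(X_t)_{t\ge 0}$ recorded from \cite{DG1} in the introduction.

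For (i), my first step is to check that the integral defining $f$ converges absolutely for every $x\in\R^d$. Since $h\in\mathcal{B}_b(\R^d,\R)$, exponential ergodicity delivers constants $C,\kappa>0$ such that
\begin{equation*}
|P_t h(x)-\mu(h)| \ \le \ C\|h\|_{\infty}(1+V(x))\,\mathrm{e}^{-\kappa t}, \qquad x\in\R^d,\ t\ge 0,
\end{equation*}
so $f$ is well-defined and locally bounded. Next I would use the semigroup property, together with the invariance of constants under $P_t$, to obtain
\begin{equation*}
P_s f(x) \ = \ -\int_0^{\infty}\left[P_{s+t} h(x)-\mu(h)\right]\,\dif t \ = \ -\int_s^{\infty}\left[P_u h(x)-\mu(h)\right]\,\dif u,
\end{equation*}
so that $P_s f(x)-f(x)=\int_0^s[P_t h(x)-\mu(h)]\,\dif t$. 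Dividing by $s$ and letting $s\downarrow 0$, the right-hand side tends to $h(x)-\mu(h)$ by continuity of $t\mapsto P_t h(x)$ at $t=0$. The main obstacle is to show that this semigroup-generator limit coincides with the differential expression on the right of \eqref{e:A}, i.e.\ that $f$ lies in $\mathcal{D}(\mathcal{A})$ in the pointwise sense: this uses the Malliavin-calculus regularity of $P_t h$ developed in Section \ref{Malliavin-Stein}, which compensates for the non-differentiability of $g$ at zero via the weighted occupation time $L^{\e,x}_t$. Combined with It\^o's formula applied to $f(X^x_t)$, the match is then automatic.

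For (ii), the plan is to reduce to (i) by a Fubini computation. Writing $\tilde f(x):=\int_0^{\infty}\mathrm{e}^{-\lambda t}P_t[\lambda f-h+\mu(h)](x)\,\dif t$ and splitting the integrand, I would substitute Dynkin's formula $P_t f(x)=f(x)+\int_0^t[P_u h(x)-\mu(h)]\,\dif u$ (valid by (i)) into $\lambda\int_0^{\infty}\mathrm{e}^{-\lambda t}P_t f(x)\,\dif t$ and interchange the order of integration to obtain
\begin{equation*}
\lambda\int_0^{\infty}\mathrm{e}^{-\lambda t}P_t f(x)\,\dif t \ = \ f(x)+\int_0^{\infty}\mathrm{e}^{-\lambda u}[P_u h(x)-\mu(h)]\,\dif u.
\end{equation*}
This cancels exactly with the subtracted $\int_0^{\infty}\mathrm{e}^{-\lambda t}P_t[h-\mu(h)](x)\,\dif t$, yielding $\tilde f(x)=f(x)$ as desired. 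Fubini and all exchanges of limit and integral are underwritten by the exponential bound above together with the additional $\mathrm{e}^{-\lambda t}$ weight, so no further regularity of $f$ is needed beyond what (i) already provides.
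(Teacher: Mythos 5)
Your proof follows essentially the same route the paper intends: the well-definedness computation via exponential ergodicity (Lemma \ref{lem:AV2}) is identical to the paper's, and the remaining verification — which the paper dispatches with the single sentence that ``the remainder is similar to \cite[Proposition 6.1]{FSX1}'' — is precisely the Dynkin/Fubini argument you spell out. Your Fubini reduction for part (ii) is clean and correct, and in fact gives more detail than the paper itself records.

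One step, however, is asserted too quickly for the generality claimed. You write that $\frac{1}{s}\int_0^s[P_t h(x)-\mu(h)]\,\dif t \to h(x)-\mu(h)$ ``by continuity of $t\mapsto P_t h(x)$ at $t=0$.'' For $h$ merely in $\mathcal{B}_b(\R^d,\R)$ this pointwise right-continuity at $0$ is \emph{not} automatic: for example, if $h$ is the indicator of a Lebesgue-null set then $P_t h\equiv 0$ for all $t>0$ while $h(x)$ may equal $1$. The paper's mechanism for handling rough $h$ — visible later in the proof of Lemma \ref{lem:regf} — is to regularize $h$ by a Gaussian mollification $h_\delta\in\mathcal{C}^1_b$, establish the identity for $h_\delta$ (where the continuity at $t=0$ and the generator identification via It\^{o}/Malliavin are unproblematic), and then pass to the limit $\delta\to 0$ using the closedness of the relevant operators and the uniform-in-$\delta$ bounds. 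Your sketch already gestures at this when you invoke the Malliavin-calculus regularity, but the $s\downarrow 0$ limit itself should also be routed through the mollified $h_\delta$, not asserted directly for bare $h\in\mathcal{B}_b$.
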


\begin{lemma}\label{lem:regf}
Let $h\in \mathcal{B}_b(\R^d, \R)$ and $f$ be the solution to the Stein's equation \eqref{e:SE}. There exists some positive constant $C$ such that
	\begin{eqnarray*}
|f(x)| \ &\leq& \ C\|h\|_\infty(1+|x|^2), \\
|\nabla f(x)| \ &\leq& \ C \|h\|_{\infty}(1+ |x|^2).
	\end{eqnarray*}
\end{lemma}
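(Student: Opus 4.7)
The plan is to exploit the integral representation $f(x) = -\int_0^\infty \bigl[P_t h(x) - \mu(h)\bigr]\,\dif t$ from Lemma~\ref{prop:ST}(i), and to control the integrand by combining two analytical inputs: the $V$-uniform exponential ergodicity of $(X_t)_{t\ge 0}$ with a Lyapunov function $V(x)\asymp 1+|x|^2$ (see \cite{DG1}), and a Bismut--Elworthy--Li-type gradient estimate for $P_t$ obtained via Malliavin calculus.

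For $|f(x)|$ the work is almost immediate: exponential ergodicity in the weighted total-variation norm yields
$$|P_t h(x) - \mu(h)| \ \le\ \|h\|_\infty\,\|\delta_x P_t - \mu\|_{\mathrm{TV},V}\ \le\ C\|h\|_\infty(1+|x|^2)\eup^{-\lambda t},$$
so that integrating over $t\in(0,\infty)$ gives $|f(x)|\le C\|h\|_\infty(1+|x|^2)$, as claimed.

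For $|\nabla f(x)|$ I would split $\nabla f(x) = -\int_0^1 \nabla P_t h(x)\,\dif t - \int_1^\infty \nabla P_t h(x)\,\dif t$. On $(0,1]$ I would derive a Bismut representation of the form $\nabla_u P_t h(x) = \E\bigl[h(X^x_t)\,M_t^u\bigr]$ with Malliavin weight $M_t^u = \frac{1}{t}\int_0^t \bigl\langle \sigma^{-1}\nabla_u X^x_s,\,\dif B_s\bigr\rangle$; the Itô isometry together with polynomial moment bounds on the Jacobian flow $\nabla_u X^x_s$ yields $|\nabla P_t h(x)| \le C\|h\|_\infty(1+|x|^a)/\sqrt{t}$, which is integrable near $t=0$. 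On $[1,\infty)$ I would exploit the semigroup identity $P_t h - \mu(h) = P_1\bigl(P_{t-1}h - \mu(h)\bigr)$, apply the Bismut representation at time $1$ to $P_{t-1}h - \mu(h)$, and insert the pointwise ergodic estimate $|P_{t-1}h(y)-\mu(h)|\le C\|h\|_\infty(1+|y|^2)\eup^{-\lambda(t-1)}$ together with Gaussian-type moment bounds on $X^x_1$ to obtain $|\nabla P_t h(x)| \le C\|h\|_\infty(1+|x|^b)\eup^{-\lambda(t-1)}$, which integrates to a finite quantity. Combining the two pieces gives $|\nabla f(x)|\le C\|h\|_\infty(1+|x|^2)$.

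The main obstacle is the non-differentiability of $g$ on the hyperplane $\{x: {\rm e}'x = 0\}$, which obstructs a classical definition of $\nabla_u X^x_s$ and hence of the Bismut weight $M_t^u$. I would sidestep this by smoothing the piecewise-linear drift to a family $g_\e$, establishing the Bismut identity and the associated Jacobian moment bounds for the approximate SDE, and then passing to the limit $\e\downarrow 0$. The quantitative engine for this passage is the weighted occupation time $L^{\e,x}_t$ of Section~\ref{app:occupation}: uniform ellipticity of $\sigma\sigma'$ ensures that the process spends little time in an $\e$-neighbourhood of the singular hyperplane, and the polynomial-in-$|x|$ moment bounds on $L^{\e,x}_t$ allow one to control the discrepancy between the smoothed and the original Jacobian flows uniformly in $\e$, delivering the Bismut-type gradient estimate in the non-smooth regime and thus the desired bound on $\nabla f$.
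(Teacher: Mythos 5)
Your bound on $|f(x)|$ coincides with the paper's argument: apply $V$-uniform exponential ergodicity to the integrand of $f(x)=-\int_0^\infty\bigl(P_t h(x)-\mu(h)\bigr)\dif t$ and integrate. For $|\nabla f(x)|$ you take a genuinely different route. The paper does \emph{not} split the integral at $t=1$; instead it passes to the resolvent-type representation of Lemma~\ref{prop:ST}(ii), $f(x)=\int_0^\infty \eup^{-\lambda t}P_t[\lambda f(x)-h(x)+\mu(h)]\dif t$, differentiates term by term via Bismut's formula (Lemma~\ref{hLef2}), and chooses $\lambda$ large enough that $\eup^{-\lambda t}$ simultaneously absorbs the $t^{-1/2}$ singularity of the Malliavin weight and the exponential factors $\eup^{C_{\mathrm{op}}t}$, $\eup^{(C_4/2)t}$ in the moment bounds; the already-proved quadratic bound on $f$ is then fed back into the $\lambda f$ term. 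Your split-at-$t=1$ decomposition achieves the same end: Bismut on $(0,1]$ gives an integrable $t^{-1/2}$ singularity with no growth in $x$, while on $[1,\infty)$ the composition $\nabla P_1\bigl(P_{t-1}h-\mu(h)\bigr)$ lets the ergodic decay of $P_{t-1}h-\mu(h)$ supply the large-time integrability. The paper's route avoids the semigroup-composition step at the price of a self-referential estimate; your route is arguably more elementary since it never invokes the bound on $f$ inside the gradient argument. Both yield the stated $(1+|x|^2)$ bound. One step you leave implicit and should make explicit: the Bismut identity in Lemma~\ref{hLef2} is proved only for $\psi\in\Ccal^1$ with bounded gradient, so besides smoothing the drift $g\to g_\e$ (which you correctly tie to the occupation-time estimate of Section~\ref{app:occupation} and the a.s.\ convergence $J^{\e,x}_{s,t}\to J^x_{s,t}$), one must also mollify the test function, $h\mapsto h_\delta=h\ast\varphi_\delta$, prove the gradient bound for $h_\delta$ uniformly in $\delta$, and then let $\delta\to 0$ using closedness of $\nabla$ — exactly as the paper does at the end of the proof of Lemma~\ref{lem:regf}.
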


We shall use the Malliavin calculus to get the regularity in Lemma \ref{lem:regf} for $h\in \mathcal{B}_b(\R^d,\R)$. Since $g(x)$ is not differentiable, we consider an approximation of SDE \eqref{hSDEg}:
\begin{eqnarray}\label{hSDEgApp}
\dif X^{\e}_t  \ = \ g_\e (X^{\e}_t) \dif t +\sigma \dif B_t,
\end{eqnarray}
where
\begin{eqnarray*}%\label{e:ge}
g_{\e} (x) \ = \ -\beta p -Rx+\rho_{\e}({\rm e}^{\prime}  x) (R-\alpha I)p,
\end{eqnarray*}
and $\rho_{\e}$ is defined as below: for $0<\e<1$,
$$\rho_{\e}(y) \ = \  \begin{cases}
0, & \quad y<-\e, \\
y, & \quad y>\e, \\
\frac{3\e}{16} - \frac{ 1 }{ 16 \e^3} y^4 + \frac{ 3 }{8 \e} y^2 + \frac{1}{2} y, & \quad |y| \le \e. \\
\end{cases}$$
It is easy to check that $\|\dot{\rho}_{\e}\|_{\infty} \leq 1$ and that
\begin{eqnarray}
g_{\e} \ & \in & \ \mathcal{C}^2(\mathbb{R}^d,\mathbb{R}^d),  \nonumber  \\
\nabla g_{\e}(x) \ & = & \ -R +\dot{\rho}_{\e} ({\rm e}^{\prime}  x) (R-\alpha I)p {\rm e}^{\prime} , \label{e:Nge} \\
\nabla^2 g_{\e}(x) \ & =& \ \ddot{\rho}_{\e}({\rm e}^{\prime}  x){\rm e}\otimes(R-\alpha I)  p{\rm e}^{\prime} . \nonumber
\end{eqnarray}
Moreover, we can see that $ \lim\limits_{\e \rightarrow 0}g_{\e}(x)=g(x)$ for all $x\in \R^d$ and
$$\lim_{\e \rightarrow 0}\nabla g_{\e}(x) \ = \ \begin{cases}-R +(R-\alpha I)p {\rm e}^{\prime} , & \quad {\rm e}^{\prime} x>0,
\\
-R, & \quad {\rm e}^{\prime}  x<0,  \\
-R + \frac{1}{2}(R-\alpha I)p{\rm e}^{\prime}  , & \quad {\rm e}^{\prime} x=0.
\end{cases}$$

\begin{lemma}\label{lem:XXem}
For all $x \in \R^{d}$, $t\geq 0$ and intergers $m \geq 2$, we have
\begin{eqnarray}  \label{e:XXem}
\E |X^{\e,x}_t|^{m}, \E|X^{x}_{t}|^{m} \ &\leq& \ e^{C_m t}(|x|^{m}+1),
\end{eqnarray}
where $C_m$ is in \eqref{Cm}. 
%In addition 
%\begin{eqnarray*}
%\E |X_t^{0}|
%&\leq& (\tl{C}_{\rm op}t + \|\sigma\|_{\rm op}t^{\frac{1}{2}})e^{\tl{C}_{\rm op} t},
%\end{eqnarray*}
%where $\tl{C}_{\rm op}$ is in \eqref{tlCop}. 
Moreover, we have as $\e \to 0$,
\begin{eqnarray}
\E|X^{\e,x}_{t}-X^{x}_{t}|^m \rightarrow 0,  \qquad  t \geq 0. \label{e:XeCon-1}
\end{eqnarray}
\end{lemma}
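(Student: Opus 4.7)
The plan for part (i) is a straightforward Lyapunov-function argument. Take $V_m(x) = (1+|x|^2)^{m/2}$ and apply It\^o's formula to $V_m(X^x_t)$. The drift contribution is $m(1+|x|^2)^{m/2-1}\langle x, g(x)\rangle$, and the key pointwise estimate is $|\langle x, g(x)\rangle| \le \tl{C}_{{\rm op}}(1+|x|^2)$; this follows by expanding $g(x) = -\beta p - Rx + (R-\alpha I)p({\rm e}'x)^+$ and recalling the definitions of $C_{{\rm op}}$ and $\tl{C}_{{\rm op}}$ in \eqref{Cop}--\eqref{tlCop}. The explicit piecewise formula for $\rho_{\e}$ shows that $|\rho_{\e}(y)| \le |y| + C$ uniformly in $\e \in (0,1)$, so exactly the same bound $|\langle x, g_{\e}(x)\rangle| \le \tl{C}_{{\rm op}}(1+|x|^2)$ holds for the approximation. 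Expanding $\nabla^2 V_m$ one checks that the It\^o correction contributes at most $C m^2 \tl{C}_{{\rm op}} V_m(x)$. Altogether one obtains $\mathcal{A} V_m(x) \le (C_m/2) V_m(x)$ with the same bound for the generator of $X^{\e}$ uniformly in $\e$, and Gr\"onwall's inequality yields the claimed moment bound after using $V_m(x) \le C(1+|x|^m)$.

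For part (ii), I would subtract the two integral equations to obtain
\begin{equation*}
X^{\e,x}_t - X^x_t = \int_0^t \bigl[g_{\e}(X^{\e,x}_s) - g_{\e}(X^x_s)\bigr]\dif s + \int_0^t \bigl[g_{\e}(X^x_s) - g(X^x_s)\bigr]\dif s.
\end{equation*}
The first integrand is bounded by an $\e$-independent Lipschitz constant times $|X^{\e,x}_s - X^x_s|$, because $\|\dot{\rho}_{\e}\|_{\infty} \le 1$ forces $|g_{\e}(x) - g_{\e}(y)| \le C|x-y|$ uniformly in $\e$. The second integrand is handled pointwise: since $\rho_{\e}(y) = y^+$ whenever $|y| > \e$, the difference $g_{\e} - g$ vanishes off the slab $\{|{\rm e}'X^x_s| \le \e\}$, and on that slab the explicit formula for $\rho_{\e}$ gives $|g_{\e}(x) - g(x)| \le C\e$ uniformly. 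Consequently the second integral is deterministically bounded by $C\e t$.

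Taking $m$th powers, applying H\"older's inequality, and passing to expectations leads to
\begin{equation*}
\E|X^{\e,x}_t - X^x_t|^m \le C_1 t^{m-1}\int_0^t \E|X^{\e,x}_s - X^x_s|^m \dif s + C_2 (\e t)^m,
\end{equation*}
and Gr\"onwall's inequality then yields $\E|X^{\e,x}_t - X^x_t|^m \le C(\e t)^m e^{Ct} \to 0$ as $\e \to 0$. The only genuine subtlety is that $g$ fails to be differentiable on the hyperplane $\{{\rm e}'x = 0\}$, which rules out a naive Lipschitz comparison with the limit drift; this is circumvented by the design of $\rho_{\e}$, which makes the smoothing error $g_{\e} - g$ uniformly of size $O(\e)$ on a slab of width $2\e$ and exactly zero off it. This uniform bound is the main ingredient that makes the clean Gr\"onwall argument work without invoking any occupation-time estimates or density bounds (which are needed elsewhere in the paper, but not here).
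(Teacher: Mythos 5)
Your proof is correct and follows essentially the same route as the paper's: It\^o's formula combined with the linear-growth bound $|g(x)|\le \tl{C}_{\rm op}(1+|x|)$ and Gr\"onwall for part (i), and for part (ii) the decomposition $g_\e(X^{\e,x}_s)-g_\e(X^x_s)+g_\e(X^x_s)-g(X^x_s)$ together with the uniform-in-$\e$ bound $\|\nabla g_\e\|_{\rm op}\le C_{\rm op}$ and the $O(\e)$ estimate on $g_\e-g$. The only cosmetic differences are that the paper applies It\^o to $|x|^m$ directly rather than to $(1+|x|^2)^{m/2}$, and in (ii) solves the difference equation explicitly via variation of constants rather than invoking Gr\"onwall; also note that your closing remark about nondifferentiability of $g$ blocking a direct Lipschitz comparison is slightly off, since $g$ is globally Lipschitz (as $y\mapsto y^+$ is), so the add-subtract is needed only because $g_\e$ and $g$ are different functions, not because of any loss of regularity.
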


We consider the derivative of $X^{\e,x}_t$ with respect to initial value $x$, which is called the Jacobi flow. Let $u \in \mathbb{R}^d$ and the Jacobi flow $J_{t}^{\e,x}$ along the direction $u$ is defined as
\begin{eqnarray*}
\nabla_{u} X^{\e,x}_{t} &=& \lim_{\e_1 \rightarrow 0} \frac{ X^{\e,x+\e_1 u}_t - X^{\e,x}_t  }{\e_1}, \quad t\geq 0.
\end{eqnarray*}
The above limit exists and satisfies
\begin{eqnarray*}
\frac{\dif}{\dif t}\nabla_{u} X^{\e,x}_{t} &=&\nabla g_{\e} (X^{\e,x}_t) \nabla_{u} X^{\e,x}_{t}, \quad \nabla_u X_{0}^{\e,x}=u.
\end{eqnarray*}
Define
\begin{eqnarray*}
J_{s,t}^{\e,x}:&=&\exp \left( \int_s^t \nabla g_{\e} (X_{r}^{\e,x}) \dif r \right), \ \ \ \ \ \ 0 \le s \le t<\infty.
\end{eqnarray*}
It is called the Jacobian between $s$ and $t$. For notational simplicity, denote $J^{\e,x}_{t}=J^{\e,x}_{0,t}$. Then we have
\begin{eqnarray}\label{hJaF1}
\nabla_{u} X^{\e,x}_{t}   &=&  J_{t}^{\e,x} u .
\end{eqnarray}
Define
$$\widetilde{\nabla g(x)}:\ = \ -R +1_{\{{\rm e}^{\prime} x>0\}} (R-\alpha I)p {\rm e}^{\prime}.$$
It is easy to see that $\lim_{\e \rightarrow 0} \nabla g_{\e}(x)=\widetilde{\nabla g(x)}$ for all ${\rm e}^{\prime} x \ne 0$.
Because $g(x)$ is not differentiable for ${\rm e}^{\prime}  x = 0$, it is necessary for us to define the above $\widetilde{g(x)}$ which takes the same value as $\nabla g(x)$ for ${\rm e}^{\prime} x \ne 0$ and has a definition on ${\rm e}^{\prime}  x=0$. Define
$$J^{x}_{s,t}:\ = \ \exp\left(\int_{s}^{t} \widetilde{\nabla g(X^{x}_{r})} \dif r\right), \ \ \ \ \ \ x \in \R^{d}, \  \ 0 \le s \le t<\infty.$$
Then we have the following lemma. 
\begin{lemma}  \label{l:XeCon}
For any $x \in \R^{d}$, as $\e \rightarrow 0$, the following relation holds
\begin{eqnarray*}
 \|J_{s,t}^{\e,x}-J^{x}_{s,t}\|_{{\rm op}}\ {\longrightarrow} \ 0, \ \ \ \ \ \ \ & & 0 \le s \le t<\infty, \ \ \ \ \ { \rm a.s. }
\end{eqnarray*}
\end{lemma}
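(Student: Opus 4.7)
The plan is to exploit the Lipschitz continuity of the matrix exponential in operator norm to reduce the claim to a pointwise-a.e.\ convergence of the integrands in the exponents. Set
\Bes
A_\e \ := \ \int_s^t \nabla g_\e(X^{\e,x}_r)\,\dif r, \qquad A \ := \ \int_s^t \widetilde{\nabla g(X^x_r)}\,\dif r.
\Ees
Since $|\dot\rho_\e|\le 1$ by construction, one has $\|\nabla g_\e(y)\|_{\rm op}\le C_{\rm op}$ uniformly in $\e,y$, and similarly $\|\widetilde{\nabla g(y)}\|_{\rm op}\le C_{\rm op}$, with $C_{\rm op}$ as in \eqref{Cop}. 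Thus $\|A_\e\|_{\rm op}\vee\|A\|_{\rm op}\le (t-s)C_{\rm op}$, and Duhamel's formula ${\rm e}^{A_\e}-{\rm e}^A=\int_0^1 {\rm e}^{(1-u)A_\e}(A_\e-A){\rm e}^{uA}\,\dif u$ yields the deterministic bound
\Bes
\|J_{s,t}^{\e,x}-J_{s,t}^{x}\|_{\rm op} \ \le \ {\rm e}^{(t-s)C_{\rm op}}\,\|A_\e-A\|_{\rm op}.
\Ees
It therefore suffices to show $\|A_\e-A\|_{\rm op}\to 0$ a.s.

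Next I would use the explicit decomposition
\Bes
\nabla g_\e(X^{\e,x}_r)-\widetilde{\nabla g(X^x_r)} \ = \ \left[\dot\rho_\e({\rm e}'X^{\e,x}_r)-1_{\{{\rm e}'X^x_r>0\}}\right](R-\alpha I)p{\rm e}'
\Ees
to reduce everything to the scalar process ${\rm e}'X^x_r$. A short Gr\"onwall argument based on $\|g_\e-g\|_\infty=O(\e)$ (immediate from the explicit formula for $\rho_\e$) together with the $C_{\rm op}$-Lipschitz property of $g$ gives the deterministic pathwise estimate $\sup_{r\le t}|X^{\e,x}_r-X^x_r|=O(\e)$, since the two SDEs \eqref{hSDEg} and \eqref{hSDEgApp} share the same noise $\sigma\,\dif B_r$. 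Fix $\omega$ and $r\in[s,t]$ with ${\rm e}'X^x_r(\omega)\neq 0$. If ${\rm e}'X^x_r(\omega)>0$ then for all sufficiently small $\e$ one has ${\rm e}'X^{\e,x}_r(\omega)>\e$, so $\dot\rho_\e({\rm e}'X^{\e,x}_r(\omega))=1=1_{\{{\rm e}'X^x_r(\omega)>0\}}$; the case ${\rm e}'X^x_r(\omega)<0$ is analogous. Hence the integrand converges to zero pointwise on $\{r:{\rm e}'X^x_r\neq 0\}$.

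The step I expect to be the main obstacle is verifying that the exceptional set $\{r\in[s,t]:{\rm e}'X^x_r=0\}$ has zero Lebesgue measure a.s. By the assumed uniform ellipticity $\xi'\sigma\sigma'\xi\ge c|\xi|^2$, the scalar It\^o process $Y_r:={\rm e}'X^x_r$ satisfies $\dif Y_r={\rm e}'g(X^x_r)\,\dif r+{\rm e}'\sigma\,\dif B_r$ with non-degenerate diffusion coefficient ${\rm e}'\sigma\sigma'{\rm e}\ge c|{\rm e}|^2>0$, so a standard occupation-time/local-time argument (or directly the weighted occupation-time estimates developed in Section~\ref{app:occupation}) yields $\int_s^t 1_{\{Y_r=0\}}\,\dif r=0$ a.s. Together with the uniform bound $2\|(R-\alpha I)p{\rm e}'\|_{\rm op}$ on the integrand, the dominated convergence theorem then gives $\|A_\e-A\|_{\rm op}\to 0$ a.s., which finishes the proof. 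The remaining ingredients---the Duhamel bound, the Gr\"onwall comparison, and the elementary case analysis for $\dot\rho_\e$---are routine; the real content lies in the non-visitation result for the hyperplane $\{{\rm e}'x=0\}$, which is precisely what the occupation-time machinery of Section~\ref{app:occupation} is designed to supply.
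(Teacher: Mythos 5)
Your proof is correct and rests on the same key input as the paper's, namely the occupation-time estimate of Proposition~\ref{lem:occupation}, which forces $\int_s^t 1_{\{{\rm e}'X^x_r=0\}}\,\dif r=0$ a.s.\ so that the only times where the integrand fails to converge form a Lebesgue-null set. Where you diverge from the paper is in the packaging: the paper introduces the auxiliary flow $\hat J^{\e,x}_{s,t}=\exp\bigl(\int_s^t\nabla g_\e(X^{\e,x}_r)1_{\{{\rm e}'X^x_r\ne 0\}}\,\dif r\bigr)$, shows it agrees with $J^{\e,x}_{s,t}$ on the complement of the null event $N$ (since the exponent differs only on a Lebesgue-null set of times, the two integrals coincide), and then asserts $\hat J^{\e,x}_{s,t}\to J^x_{s,t}$ as ``easy to verify''. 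Your route via the Duhamel bound $\|J^{\e,x}_{s,t}-J^x_{s,t}\|_{\rm op}\le {\rm e}^{(t-s)C_{\rm op}}\|A_\e-A\|_{\rm op}$, the pathwise Gr\"onwall estimate $\sup_{r\le t}|X^{\e,x}_r-X^x_r|=O(\e)$, the explicit decomposition of the integrand through $\dot\rho_\e$, and dominated convergence, actually makes that asserted step explicit and avoids the slightly roundabout detour through $\hat J$. In effect you prove the same lemma by the same mechanism, but with a cleaner reduction that exposes more clearly why the non-visitation of the hyperplane $\{{\rm e}'x=0\}$ is exactly the right ingredient.
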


Now we give estimates for  $\|J^{\e,x}_{s,t}\|_{ {\rm op} }$ and $\|J^{x}_{s,t}\|_{ {\rm op} }$. By \eqref{e:Nge}, we can easily see that
\begin{eqnarray}  \label{e:Ngeop}
\|\nabla g_{\e}(x)\|_{ {\rm op} } \ \le \ \|R\|_{ {\rm op} }+\|(R-\alpha I) p {\rm e}^{\prime} \|_{ {\rm op} } \ = \ C_{{\rm op}},
\end{eqnarray}
from which we obtain
\begin{eqnarray*}
	\|J^{\e,x}_{s,t}\|_{ {\rm op} }
	&  \le  & \exp\left(\int_s^{t} \|\nabla g_{\e}(X^{\e,x}_{r})\|_{ {\rm op} } \dif r\right) \ \le \ e^{C_{{\rm op}}(t-s)}.
\end{eqnarray*}
So for all $0 \le s \le t<\infty$, we have
\begin{eqnarray}\label{e:JJe}
\|J^{x}_{s,t}\|_{ {\rm op} },\|J^{\e,x}_{s,t}\|_{ {\rm op} } \  \le \ e^{C_{{\rm op}}(t-s)},
\end{eqnarray}
where the bound of $\|J^{x}_{s,t}\|_{ {\rm op} }$ comes from the same argument since the bound in \eqref{e:Ngeop} also holds for $\widetilde{\nabla g(x)}$. Observe that the above estimates immediately result that for $u\in\R^d$,
\begin{eqnarray*}
|\nabla_u X_t^x|,\,|\nabla_u X_t^{\e,x}| \ \leq  \  e^{C_{{\rm op}}t}|u|.
\end{eqnarray*}

\subsection{Bismut's formula of Malliavin calculus for SDE \eqref{hSDEgApp} }

Let $v\in L^2_{\rm{loc}} ([0,\infty) \times (\Omega, \mathcal{F}, \mathbb{P}), \mathbb{R}^d)$, that is, $\mathbb{E} \int_0^t |v(s)|^2 \dif s < \infty$ for all $t >0$. Assume that $v$ is adapted to the filtration  $ (\mathcal{F}_t)_{t\geq 0}$ with $\mathcal{F}_t= \sigma(B_s: 0\leq s \leq t)$, that is, $v(t)$ is  $\mathcal{F}_t$ measurable for $t\geq 0$. Define
\begin{eqnarray}\label{hMCV}
\mathbb{V}(t) \  = \  \int_0^t v(s) \dif s, \quad t\geq 0.
\end{eqnarray}
For $t>0$, let $F_t: \mathcal{C}([0,t],\mathbb{R}^d) \rightarrow \mathbb{R}^d$ be a $\mathcal{F}_t$ measurable map. If the following limit exists
\begin{eqnarray*}
D_{\mathbb{V}} F_t(B) \  = \  \lim_{\e_1 \rightarrow 0} \frac{  F_t(B+\e_1 \mathbb{V}) - F_t(B)  }{\e_1 }
\end{eqnarray*}
in $L^2 (  (\Omega, \mathcal{F}, \mathbb{P}), \mathbb{R}^d  ) $, then $F_t(B)$ is said to be Malliavin differentiable and $D_{\mathbb{V}} F_t (B)$ is called the Malliavin derivative of $F_t(B)$ in the direction $\mathbb{V}$.

\textbf{Bismut's formula.}  For Malliavin differentiable $F_t(B)$ such that $F_t(B), D_{\mathbb{V}} F_t(B) \in L^2((\Omega,\mathcal{F},\mathbb{P}), \mathbb{R}^d)$, we have
\begin{eqnarray}\label{e:BisFor}
\mathbb{E} [D_{\mathbb{V}} F_t(B)]  \ = \   \mathbb{E}  \left[   F_t(B)  \int_0^t \langle  v(s), \dif B_s  \rangle \right].
\end{eqnarray}

The following Malliavin derivative of $X^{\e,x}_t$ along the direction $\mathbb{V}$ exists in $L^2((\Omega,\mathcal{F},\mathbb{P}), \mathbb{R}^d)$ and is defined by
\begin{eqnarray}\label{MaC1}
D_{\mathbb{V}} X^{\e,x}_t &=& \lim_{\e_1 \rightarrow 0 }\frac{  X^{\e,x}_t(B+\e_1 \mathbb{V}) - X^{\e,x}_t(B)  }{\e_1}.
\end{eqnarray}
It satisfies the following equation
\begin{eqnarray*}
D_{\mathbb{V}} X^{\e,x}_t &=& \sigma \mathbb{V}(t)  + \int_0^t \nabla g_{\e} (X_{s}^{\e,x}) D_{\mathbb{V}} X_{s}^{\e,x} \dif s, \quad D_{\mathbb{V}} X_{0}^{\e,x}=0,
\end{eqnarray*}
which is solved by
\begin{eqnarray*}
D_{\mathbb{V}} X^{\e,x}_t &=&  \int_0^t J_{r,t}^{\e,x} \sigma v(r) \dif r.
\end{eqnarray*}
Taking $v(r)=\frac{\sigma^{-1}}{ t}  J^{\e,x}_{r}u$ for $0\leq r \leq t$, by \eqref{hJaF1}, we get
\begin{eqnarray}  \label{e:DVNu}
D_{\mathbb{V}} X^{\e,x}_t&=& \nabla_{u} X^{\e,x}_{t} .
\end{eqnarray}
With the same $v$, a similar straightforward calculation gives that
\begin{eqnarray*}
D_{\mathbb{V}} X_{s}^{\e,x} & = &\frac{s}{t} \nabla_u X_{s}^{\e,x}, \quad 0 \le s \le t.
\end{eqnarray*}

For further use, for $x, u\in \R^d$, we define
\begin{eqnarray*}
\mathcal{I}_{u}^{\e,x}(t) \ : = \ \frac{1}{ t } \int_0^t \langle \sigma^{-1} J^{\e,x}_{r} u, \dif B_r \rangle
\textrm{ \ \ and \ \ }
\mathcal{I}_{u}^{x}(t) \ : = \ \frac{1}{ t } \int_0^t \langle \sigma^{-1} J^{x}_{r} u, \dif B_r \rangle.
\end{eqnarray*}

Now we are at the position to state the following lemmas.
\begin{lemma}\label{lem:EIm}
For all $x, u \in \R^d$, $m \ge 2$ and $t>0$, we have
\begin{eqnarray}
\E \left|\mathcal{I}_{u}^{\e,x}(t)\right|^m, \E \left|\mathcal{I}_{u}^{x}(t)\right|^m \ &\le& \ \frac{C |u|^{m}}{t^{m/2}} e^{mC_{ {\rm op} }t}, \label{e:IuexEst}  \\
	\lim_{\e \rightarrow 0} \E \left|\mathcal{I}_{u}^{\e,x}(t)- \mathcal{I}_{u}^{x}(t)\right|^m&=&0. \label{e:IueCon}
\end{eqnarray}
\end{lemma}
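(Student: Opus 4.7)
\textbf{Proof plan for Lemma \ref{lem:EIm}.} The key observation is that both $\mathcal{I}_u^{\e,x}(t)$ and $\mathcal{I}_u^x(t)$ are (scaled) It\^o integrals, so the Burkholder--Davis--Gundy (BDG) inequality reduces everything to controlling the integrand pathwise, and the hard work has already been done by the Jacobian bound \eqref{e:JJe} and the a.s.\ convergence in Lemma \ref{l:XeCon}.

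For the moment bound \eqref{e:IuexEst}, I would apply BDG to the martingale $M_s := \int_0^s \langle \sigma^{-1}J_r^{\e,x}u,\dif B_r\rangle$ to obtain
\begin{equation*}
\E|\mathcal{I}_u^{\e,x}(t)|^m \ = \ \frac{1}{t^m}\E|M_t|^m \ \leq \ \frac{C_m}{t^m}\,\E\Bigl(\int_0^t |\sigma^{-1}J_r^{\e,x}u|^2\,\dif r\Bigr)^{m/2}.
\end{equation*}
Using the operator-norm inequality $|\sigma^{-1}J_r^{\e,x}u|\leq \|\sigma^{-1}\|_{\rm op}\,\|J_r^{\e,x}\|_{\rm op}\,|u|$ together with \eqref{e:JJe}, namely $\|J_r^{\e,x}\|_{\rm op}\leq e^{C_{\rm op} r}\leq e^{C_{\rm op} t}$, the integrand is bounded deterministically by $\|\sigma^{-1}\|_{\rm op}^2 e^{2C_{\rm op} t}|u|^2$. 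Integrating over $[0,t]$ and raising to power $m/2$ gives
\begin{equation*}
\E|\mathcal{I}_u^{\e,x}(t)|^m \ \leq \ \frac{C_m}{t^m}\,\|\sigma^{-1}\|_{\rm op}^m\,e^{mC_{\rm op}t}\,|u|^m\,t^{m/2} \ = \ \frac{C|u|^m}{t^{m/2}}\,e^{mC_{\rm op}t},
\end{equation*}
which is the claimed bound. The estimate for $\mathcal{I}_u^x(t)$ is identical since the same bound \eqref{e:JJe} applies to $\|J_r^x\|_{\rm op}$.

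For the convergence \eqref{e:IueCon}, I would again invoke BDG, now for the difference, to get
\begin{equation*}
\E|\mathcal{I}_u^{\e,x}(t) - \mathcal{I}_u^x(t)|^m \ \leq \ \frac{C_m}{t^m}\,\E\Bigl(\int_0^t |\sigma^{-1}(J_r^{\e,x}-J_r^x)u|^2\,\dif r\Bigr)^{m/2}.
\end{equation*}
By Lemma \ref{l:XeCon}, the integrand converges to $0$ a.s.\ for each $r\in[0,t]$, and it is dominated pathwise by the deterministic bound $4\|\sigma^{-1}\|_{\rm op}^2 e^{2C_{\rm op} t}|u|^2$ coming from \eqref{e:JJe}. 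Two applications of dominated convergence --- first inside the $\dif r$-integral and then, since $(\int_0^t|\sigma^{-1}(J_r^{\e,x}-J_r^x)u|^2\dif r)^{m/2}$ is dominated by the deterministic constant $(4t\|\sigma^{-1}\|_{\rm op}^2 e^{2C_{\rm op} t}|u|^2)^{m/2}$, under the outer expectation --- yield the desired convergence to zero.

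The only mildly delicate point is verifying that dominated convergence is legitimately applicable at both stages; this is immediate here because the dominating function $2e^{C_{\rm op} r}$ is deterministic (not merely integrable), so no uniform integrability argument is needed. Thus the proof is essentially a clean application of BDG plus \eqref{e:JJe} for (i), and BDG plus Lemma \ref{l:XeCon} together with dominated convergence for (ii).
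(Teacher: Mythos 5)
Your proposal is correct and follows exactly the same strategy as the paper: Burkholder--Davis--Gundy to reduce the moment estimate to the pathwise Jacobian bound \eqref{e:JJe}, and BDG plus dominated convergence (via the a.s.\ convergence of Lemma \ref{l:XeCon}) for the limit \eqref{e:IueCon}. Your extra remarks spelling out the two stages of dominated convergence are a helpful, accurate elaboration of the paper's terse ``by dominated convergence theorem'' step, but the underlying argument is the same.
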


\begin{lemma}\label{hLef2}
Let $\psi \in \mathcal{C}^1(\mathbb{R}^d,\mathbb{R})$ be such that $\| \nabla   \psi \|_{\infty} < \infty$. For every $t>0$, $x \in \mathbb{R}^d$  and $u \in \mathbb{R}^d$, we have
\begin{eqnarray*}
\nabla_{u} \mathbb{E} [\psi (X^{x}_t)]  \ = \  \mathbb{E}[\psi(X^{x}_t) \mathcal{I}_{u}^x(t)].
\end{eqnarray*}
\end{lemma}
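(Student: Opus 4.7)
The plan is to derive Bismut's formula for the non-smooth drift $g$ by first applying the classical Bismut formula to the regularized SDE \eqref{hSDEgApp} and then passing $\e \to 0$, leveraging the convergence statements \eqref{e:XeCon-1}, \eqref{e:IuexEst} and \eqref{e:IueCon} already established in this section, together with Lemma \ref{l:XeCon}.

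\textbf{Step 1: Bismut's formula for the smooth approximation.} Fix $\e>0$ and take the control $v(r) = \sigma^{-1} J^{\e,x}_r u /t$ on $[0,t]$, so that $\mathbb{V}(t) = \int_0^t v(s)\,ds$. Identity \eqref{e:DVNu} combined with the chain rule gives $D_{\mathbb{V}}[\psi(X^{\e,x}_t)] = \langle \nabla\psi(X^{\e,x}_t), \nabla_u X^{\e,x}_t\rangle$. Since $g_\e \in \mcl{C}^2$, $\|\nabla\psi\|_\infty <\infty$, and $|\nabla_u X^{\e,x}_t| \le e^{C_{\rm op} t}|u|$ from \eqref{e:JJe}, the directional derivative $\nabla_u$ commutes with $\E$, and then \eqref{e:BisFor} yields
$$\nabla_u \E[\psi(X^{\e,x}_t)] \ = \ \E\bigl[\psi(X^{\e,x}_t)\, \mcl{I}^{\e,x}_u(t)\bigr].$$

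\textbf{Step 2: Integral form and passage $\e \to 0$.} Integrating the above identity along the segment from $x$ to $x+\e_1 u$ gives
$$F_\e(x+\e_1 u) - F_\e(x) \ = \ \int_0^{\e_1} \E\bigl[\psi(X^{\e,\,x+\tau u}_t)\, \mcl{I}^{\e,\,x+\tau u}_u(t)\bigr] d\tau,$$
where $F_\e(y) := \E[\psi(X^{\e,y}_t)]$. Since $\|\nabla\psi\|_\infty < \infty$ forces $|\psi(y)| \le C(1+|y|)$, the $L^m$ convergence \eqref{e:XeCon-1} implies $F_\e \to F$ pointwise with $F(y) := \E[\psi(X^y_t)]$. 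For the right-hand side, a Cauchy--Schwarz split together with the uniform moment bounds \eqref{e:XXem}, \eqref{e:IuexEst} and the $L^m$ convergences \eqref{e:XeCon-1}, \eqref{e:IueCon} produces pointwise convergence in $\tau$ of the integrand to $\E[\psi(X^{x+\tau u}_t)\, \mcl{I}^{x+\tau u}_u(t)]$, and a $\tau$-uniform bound (from the same estimates) permits bounded convergence. Hence
$$F(x+\e_1 u) - F(x) \ = \ \int_0^{\e_1} \E\bigl[\psi(X^{x+\tau u}_t)\, \mcl{I}^{x+\tau u}_u(t)\bigr] d\tau.$$

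\textbf{Step 3: Identifying the directional derivative.} Dividing by $\e_1$ and letting $\e_1 \to 0$ produces the stated formula provided the integrand $\tau \mapsto G(\tau) := \E[\psi(X^{x+\tau u}_t)\, \mcl{I}^{x+\tau u}_u(t)]$ is continuous at $\tau = 0$; this is where I expect the main technical obstacle to lie. The convergence $X^{x+\tau u}_t \to X^x_t$ in every $L^m$ is a standard Gr\"onwall estimate since $g$ is globally Lipschitz. The delicate part is showing $\mcl{I}^{x+\tau u}_u(t) \to \mcl{I}^x_u(t)$ in $L^2$, because $J^y_r$ is built from $\widetilde{\nabla g}$, which is discontinuous on $\{{\rm e}' y = 0\}$. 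The plan here is to exploit the non-degeneracy of $\sigma\sigma'$: the law of ${\rm e}' X^x_s$ admits a density, so $\{s \in [0,t] : {\rm e}' X^x_s = 0\}$ has Lebesgue measure zero almost surely; combined with the uniform pathwise convergence $X^{x+\tau u}_\cdot \to X^x_\cdot$ on $[0,t]$ and the boundedness of $\widetilde{\nabla g}$, dominated convergence inside the integral $\int_0^r \widetilde{\nabla g}(X^y_s)\,ds$ gives $J^{x+\tau u}_r \to J^x_r$ almost surely, and then in $L^m$ via the uniform operator bound \eqref{e:JJe}. It\^o's isometry finally upgrades this to $L^2$ convergence of the stochastic integral defining $\mcl{I}^{x+\tau u}_u(t)$, completing the identification.
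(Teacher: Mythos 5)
Your Step 1 is exactly the paper's; the convergence facts you invoke (the $\varepsilon$-uniform moment bounds, \eqref{e:XeCon-1}, \eqref{e:IuexEst}, \eqref{e:IueCon}) are also exactly the ones the paper uses. Where you diverge is in how the limit is passed to the un-regularized process. The paper takes a shortcut: having shown $\mathbb{E}[\psi(X^{\e,x}_t)]\to\mathbb{E}[\psi(X^{x}_t)]$ and $\mathbb{E}[\psi(X^{\e,x}_t)\mathcal{I}^{\e,x}_u(t)]\to\mathbb{E}[\psi(X^{x}_t)\mathcal{I}^{x}_u(t)]$ for each fixed $x$, it simply invokes closedness of the operator $\nabla$ (citing Partington) to conclude that $\nabla_u\mathbb{E}[\psi(X^x_t)]$ exists and equals the limit of the derivatives. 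You instead integrate the $\e$-level Bismut identity along the segment $[x,x+\e_1 u]$, pass $\e\to 0$ under the $\tau$-integral, and then need to verify that $G(\tau)=\mathbb{E}[\psi(X^{x+\tau u}_t)\mathcal{I}^{x+\tau u}_u(t)]$ is continuous at $\tau=0$ to identify the directional derivative via the fundamental theorem of calculus.

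Both routes are sound, and the trade-off is real. Your FTC argument is more elementary and more explicit about what limits are actually being taken; it sidesteps the question of in which function space $\nabla$ is being treated as a closed operator and in what topology the two convergences hold (the paper's estimates are only pointwise in $x$ and it does not spell this out). The price you pay is the continuity of $G$ in the initial point, which is a genuinely new technical item: the paper never needs to vary $x$. Your sketch for it is correct — the non-degeneracy of $\sigma\sigma'$ and the weighted occupation-time estimate of Proposition \ref{lem:occupation} give that $\{s\in[0,t]:\mathrm{e}'X^x_s=0\}$ has zero Lebesgue measure a.s., and since $\widetilde{\nabla g}$ is locally constant off that hyperplane and $X^{x+\tau u}_\cdot\to X^x_\cdot$ uniformly on $[0,t]$ by Gr\"onwall, dominated convergence and \eqref{e:JJe} yield $J^{x+\tau u}_{r}\to J^{x}_{r}$ in every $L^m$, hence $\mathcal{I}^{x+\tau u}_u(t)\to\mathcal{I}^{x}_u(t)$ in $L^2$ by It\^o isometry. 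This continuity-in-$x$ lemma is essentially a parallel of Lemma \ref{l:XeCon} but with $\tau$ in place of the regularization parameter $\e$; it is plausible and routine, but it is an extra step the paper's closed-operator argument avoids.
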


\section{Proof of Theorem \ref{thm:DDE} } \label{sec-proof strategy}

We give the proof of Theorem \ref{thm:DDE} by the help of Proposition \ref{p:GeneralErgodicEM} below, whose proof will be given in Appendix \ref{App:GeneralErgodicEM}. 
\begin{proposition} \label{p:GeneralErgodicEM}
$(\tl{X}_{k}^{\eta})_{k\in \mathbb{N}_0}$ in \eqref{e:XD} admits a unique invariant measure $\tl \mu_{\eta}$ and is exponentially ergodic. More precisely, for any $k\in \mathbb{N}_0$, one has
\begin{eqnarray*}
d_W( (\tl{\mathcal{P}}_{\eta}^k)^* \nu, \tl{\mu}_{\eta})
\ &\leq& \ C\eta^{-1} e^{-c k\eta} , \\
\| (\tl{\mathcal{P}}_{\eta}^k)^* \nu- \tl{\mu}_{\eta} \|_{\rm TV}
\ &\leq& \ C\eta^{-1} e^{-c k\eta},
\end{eqnarray*}
where $C, c$ are positive constants independent of $k$ and $\eta$. Moreover, for integers $\ell \geq 2$, there exists some positive constant $C$ depending on $\ell$ but not on $\eta$ such that
\begin{eqnarray*}
\tl \mu_{\eta}(|\cdot|^{\ell}) \ \ \leq \ \ C, \ \ \ \ \ell \ge 2.
\end{eqnarray*}
\end{proposition}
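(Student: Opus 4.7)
The plan is to combine a Foster--Lyapunov drift condition with an $\eta$-uniform minorization condition on compact sub-level sets of a Lyapunov function, and then invoke (the $n_0$-step version of) Harris's theorem. First, taking $V(x) := 1 + |x|^2$, squaring \eqref{e:XD} and conditioning on $\tl{X}_k^\eta$ give
\begin{equation*}
\tl{\mcl{P}}_\eta V(x) \ = \ V(x) + 2\eta\langle x, g(x)\rangle + \eta^{2}|g(x)|^{2} + \eta\,\mathrm{Tr}(\sigma\sigma').
\end{equation*}
Since $R = (I-P')\mathrm{diag}(v)$ is coercive on $\R^d$ (its symmetric part is strictly positive by the substochasticity of $P$ and positivity of $v$) and the piecewise-linear correction $(R-\alpha I)p({\rm e}'x)^+$ contributes at most $|(R-\alpha I)p|\sqrt d\,|x|$, one obtains $\langle x, g(x)\rangle \leq -c_1|x|^2 + c_2$, together with $|g(x)|^2 \leq C(1+|x|^2)$. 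For $\eta$ small enough these deliver the drift inequality $\tl{\mcl{P}}_\eta V \leq (1-c\eta)V + b\eta$ with $c,b>0$ independent of $\eta$. Iterating and applying the recursion against any stationary measure yields $\tl{\mu}_\eta(V) \leq b/c$, and the same argument with $V_\ell(x) := (1+|x|^2)^\ell$ delivers the stated higher moments $\tl{\mu}_\eta(|\cdot|^\ell) \leq C_\ell$.

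Next, for the minorization one cannot use a single step: $\tl{\mcl{P}}_\eta(x,\cdot)$ is Gaussian with covariance $\eta\sigma\sigma'$, so its uniform lower bound on any ball of fixed radius decays like $\exp(-c/\eta)$. Instead I would aggregate $n_0 := \lceil 1/\eta \rceil$ steps. Using \eqref{e:Ngeop} to control the drift uniformly and iterating the Gaussian convolution, the $n_0$-step law of $\tl{X}^\eta$ starting from any $x$ in a compact sub-level set $\{V \leq K\}$ acquires a density bounded below by a fixed positive function of $y$, giving $\tl{\mcl{P}}_\eta^{n_0}(x,\cdot) \geq \alpha\,\nu(\cdot)$ with $\alpha>0$ and a probability measure $\nu$ both independent of $\eta$. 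Harris's theorem applied to the $n_0$-step chain then produces contraction by a factor $\gamma\in(0,1)$ per super-step in the weighted norm $\|\cdot\|_{\mathrm{TV},V}$, with $\gamma$ independent of $\eta$; existence and uniqueness of $\tl{\mu}_\eta$ are a by-product of the completeness of this weighted space.

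Unpacking the super-step contraction in terms of one-step iterates, writing $k = m n_0 + r$ with $0 \leq r < n_0$, yields a bound of the form $C\gamma^m\|\nu-\tl{\mu}_\eta\|_{\mathrm{TV},V}$; because $n_0 \asymp \eta^{-1}$, this equals $C\,e^{-c k\eta}$ up to constants, where the constant $C$ absorbs the Lyapunov growth during the initial $n_0$ burn-in steps, and this is the source of the $C\eta^{-1}$ prefactor in the statement. The $\mathrm{TV}$ conclusion is immediate from $\|\cdot\|_{\mathrm{TV}} \leq \|\cdot\|_{\mathrm{TV},V}$, while the Wasserstein-$1$ conclusion follows from \eqref{e:dWandTV} since $V(x) = 1+|x|^2$ dominates $|x|$. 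The main obstacle in executing this program is establishing the $\eta$-uniform minorization: the non-differentiable piecewise-linear term $(R-\alpha I)p({\rm e}'x)^+$ obstructs a clean closed-form Gaussian convolution, forcing one to combine a Lyapunov-based trapping argument with iterated stochastic spreading in order to bound the $n_0$-step density from below on a ball of radius of order $1$ rather than $\sqrt{\eta}$.
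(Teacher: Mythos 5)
The key gap is in your Foster--Lyapunov step. You take $V(x)=1+|x|^2$ and assert $\langle x, g(x)\rangle \le -c_1|x|^2+c_2$ by treating $-Rx$ as coercive and $(R-\alpha I)p({\rm e}'x)^+$ as a lower-order perturbation. Neither claim holds in general. The symmetric part of $R=(I-P')\mathrm{diag}(v)$ need not be positive definite (e.g.\ $d=2$, $P_{12}=1$, $v_1>4v_2$), and $\langle x,(R-\alpha I)p\rangle({\rm e}'x)^+$ is itself of order $|x|^2$, the same order as $-\langle x,Rx\rangle$, so it cannot be absorbed as a perturbation. This is precisely the difficulty that forced Dieker--Gao to construct, via a common quadratic Lyapunov function, a specific positive definite matrix $\tilde Q$ solving the two Lyapunov inequalities $\tilde Q(-R)+(-R)'\tilde Q<0$ and $\tilde Q(-(I-p{\rm e}')R)+(-R'(I-{\rm e}p'))\tilde Q\le 0$, and then to build $V(y)=({\rm e}'y)^2+\kappa[y-p\phi({\rm e}'y)]'\tilde Q[y-p\phi({\rm e}'y)]+\hat c_2$ with a carefully chosen cutoff $\phi$. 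With $\tilde Q=I$ (your choice) the required inequalities generically fail, so the announced one-step drift $\tl{\mcl P}_\eta V\le(1-c\eta)V+b\eta$ is not available, and the entire argument collapses at the first step.

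Beyond that, your route to the drift and minorization is genuinely different from the paper's. The paper does not compute the discrete one-step drift directly; instead it first proves the continuous-time drift $\mcl A V^\ell\le -c_1V^\ell+\breve c_\ell$ via It\^o and the Dieker--Gao construction, then transfers it to the EM chain by comparing $\tl X_1^{\eta,x}$ with the true solution $X_\eta^x$ over one time step (a Mattingly--Stuart--Higham style "shadowing" estimate, Lemmas~\ref{lem:Xgesm} and~\ref{lem:PXeD}); this avoids having to redo the delicate quadratic-form algebra in discrete time. For ergodicity the paper proves strong Feller and irreducibility directly from the Gaussian one-step kernel, shows a fixed compact level set of $V$ is petite via a \emph{single}-step Gaussian lower bound, and invokes the Tuominen--Tweedie subgeometric drift theorem with weight sequence $r(n)=\tfrac{c_1}{8}\eta\,e^{c_1 n\eta/8}$, which is where the $\eta^{-1}$ prefactor comes from. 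Your $n_0\asymp 1/\eta$-step minorization is a legitimate alternative idea and in fact more natural if one wants $\eta$-uniform minorization constants, but as you acknowledge you have not executed it, and it is not the route the paper takes.

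In short: the moment bounds and higher-power drift you outline at the end would be fine \emph{if} you had a one-step drift, but the elementary $V=1+|x|^2$ does not give one for this piecewise-linear generator. You must import the Dieker--Gao common quadratic Lyapunov function, after which the paper's shadowing argument (or a careful direct discrete computation with the correct $V$) supplies the drift, and the ergodic rate then follows from the subgeometric drift framework with petite sets rather than from an $n_0$-step Harris minorization.
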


\begin{proof}[{\bf Proof of Theorem  \ref{thm:DDE}}]
(i) By Proposition \ref{p:GeneralErgodicEM}, we know $(\tl{X}_{k}^{\eta})_{k\in \mathbb{N}_0}$ in \eqref{e:XD} admits a unique invariant measure $\tl{\mu}_{\eta}$. Let the initial value $\tl{X}^{\eta}_{0}$ take the invariant measure $\tl \mu_{\eta}$.
We know $(\tl{X}^{\eta}_{k})_{k \in \mathbb{N}_0}$ is a stationary Markov chain. Denote $W=\tilde{X}_{0}^{\eta}$, $W'=\tilde{X}_{1}^{\eta}$ and $\delta = W'-W$. It is easy to see that
\begin{eqnarray}  \label{e:EDelW}
\mathbb{E}[\delta|W] \ = \ g(W) \eta \text{ \ \ and \ \ }
\mathbb{E}[\delta \delta^{\prime}|W] \  =\ g(W) g^{\prime}(W) \eta^2 +\sigma \sigma^{\prime}\eta.
\end{eqnarray}

Let $f$ be the solution to Eq. \eqref{e:PoiLip} with $h\in {\rm Lip}_0(1)$. Since $W$ and $W'$ have the same distribution, we have
\begin{eqnarray}\label{hMC51e}
0&=& \mathbb{E}f(W')-\mathbb{E}f(W) \nonumber \\
&=& \mathbb{E}[ \langle \delta, \nabla f(W) \rangle  ]
+\mathbb{E} \int_0^1 \int_0^1 r \langle \delta \delta^{\prime}, \nabla^2 f (W+\tl{r} r \delta ) \rangle_{\textrm{HS}} \dif \tl{r} \dif r \nonumber \\
&=&\mathbb{E}[  \langle g(W),\nabla f(W) \rangle ]\eta+\mathbb{E} \int_0^1 \int_0^1 r \langle \delta \delta^{\prime}, \nabla^2 f(W+\tl{r} r \delta ) \rangle_{\textrm{HS}} \dif \tl{r} \dif r, \nonumber
\end{eqnarray}
where
\begin{eqnarray}\label{hMC52e}
\mathbb{E}[ \langle \delta, \nabla f(W) \rangle  ]
\ = \ \mathbb{E}[ \langle \mathbb{E}(\delta|W),\nabla f(W)\rangle  ]
\ = \ \mathbb{E}[  \langle g(W),\nabla f(W) \rangle ]\eta. \nonumber
\end{eqnarray}
In addition,
\begin{eqnarray*}%\label{hMC53e}
&& \mathbb{E} \int_0^1 \int_0^1 r \langle \delta \delta^{\prime}, \nabla^2 f(W+\tl{r} r \delta ) \rangle_{\textrm{HS}} \dif \tl{r} \dif r \nonumber \\
&=& \frac{1}{2}\mathbb{E} \langle \delta \delta^{\prime}, \nabla^2 f(W) \rangle_{\textrm{HS}}+
\mathbb{E} \int_0^1 \int_0^1 r \langle \delta \delta^{\prime}, \nabla^2 f(W+\tl{r} r \delta )-\nabla^2 f(W) \rangle_{\textrm{HS}} \dif \tl{r} \dif r \nonumber \\
&=& \frac{\eta}{2} \mathbb{E} [ \langle \sigma \sigma^{\prime}, \nabla^2 f(W) \rangle_{\textrm{HS}}  ] +\frac{\eta^2}{2} \mathbb{E} [  \langle g(W) g^{\prime}(W), \nabla^2 f(W) \rangle_{\textrm{HS}} ] \nonumber \\
&\ & \ \ \ \ \ \ \ \ \ \ \ \ \ \ \ \ \ \ \ +\mathbb{E} \int_0^1 \int_0^1 r \langle \delta \delta^{\prime}, \nabla^2 f (W+\tl{r} r \delta )-\nabla^2 f(W) \rangle_{\textrm{HS}} \dif \tl{r} \dif r,
\end{eqnarray*}
where the second equality is by the relation $\mathbb{E}[\langle \delta \delta^{\prime}, \nabla^2 f(W) \rangle_{\textrm{HS}}]=\mathbb{E}[\langle \E[\delta \delta^{\prime}|W], \nabla^2 f(W) \rangle_{\textrm{HS}}]$ and \eqref{e:EDelW}.
Collecting the previous relations, we obtain
\begin{eqnarray}\label{e:Afh}
\mathbb{E}[\mathcal{A}f(W) ] \ = \ \frac \eta 2 {\rm I}+\frac{1}{\eta} {\rm II},
\end{eqnarray}
where
$$
{\rm I} \ = \ -\mathbb{E} [ \langle g(W) g^{\prime}(W), \nabla^2 f(W) \rangle_{\textrm{HS}} ], \ \
{\rm II} \ = \ -\mathbb{E} \int_0^1 \int_0^1 r \langle \delta \delta^{\prime}, \nabla^2 f(W+\tl{r} r \delta )-\nabla^2 f(W) \rangle_{\textrm{HS}} \dif\tl{r} \dif r.
$$

Since $|g(x)| \leq \tl{C}_{\rm op}(1+|x|)$ for all $x\in \R^d$ with $\tl{C}_{{\rm op}}$ in \eqref{tlCop}, by using \eqref{e:2f} in Lemma \ref{lem:Lipregf}, one has
\begin{eqnarray}\label{e:Afh-1}
|{\rm I}|
&=&\mathbb{E}|\langle g(W) g^{\prime}(W), \nabla^2 f(W) \rangle_{\textrm{HS}} |
\ \leq \ C(1+\E|W|^5)
\ \leq \ C,
\end{eqnarray}
where the last inequality is by Proposition \ref{p:GeneralErgodicEM}.

We claim that there exists some positive constant $C$, independent of $\eta$ such that
\begin{eqnarray}
\mathbb{E} \int_0^1 \int_0^1 r \langle \delta \delta^{\prime}, \nabla^2 f(W+\tl{r} r \delta )-\nabla^2 f(W) \rangle_{\textrm{HS}} 1_{\{|\delta|< 1\} } \dif\tl{r} \dif r
\ &\leq& \ C\eta^{\frac{3}{2}}, \label{e:claim-1}   \\
\mathbb{E} \int_0^1 \int_0^1 r \langle \delta \delta^{\prime}, \nabla^2 f(W+\tl{r} r \delta )-\nabla^2 f(W) \rangle_{\textrm{HS}} 1_{ \{ |\delta|\geq 1 \} } \dif\tl{r} \dif r
\ &\leq&\  C\eta^{\frac{3}{2}},\label{e:claim-2}
\end{eqnarray}
thus, we know
\begin{eqnarray}\label{e:Afh-2}
|{\rm II}| \ \leq \ C\eta^{\frac{3}{2}}.
\end{eqnarray}

By using \eqref{e:Afh}, \eqref{e:Afh-1}, \eqref{e:Afh-2} and Eq. \eqref{e:PoiLip},    one has
\begin{eqnarray*}
d_{W}(\tl \mu_{\eta},\mu)
&=& \sup_{h\in {\rm Lip_0(1)}} |\E h(W)-\mu(h)|
\ = \ \sup_{h\in {\rm Lip_0(1)} } | \mathbb{E}\mathcal{A}f(W)|
\ \leq \ C\eta^{\frac{1}{2}}.
\end{eqnarray*}

It remains to show \eqref{e:claim-1} and \eqref{e:claim-2}. It follows from \eqref{e:3f} in Lemma \ref{lem:Lipregf} and Proposition \ref{p:GeneralErgodicEM} that
\begin{eqnarray*}
&& \mathbb{E} \int_0^1\int_0^1 r \langle \delta \delta^{\prime}, \nabla^2 f(W+\tl{r} r \delta )-\nabla^2 f(W) \rangle_{\textrm{HS}} 1_{\{|\delta|< 1\} } \dif\tl{r} \dif r   \\
&\leq& C\E[|\delta|^3 (1+|W|^4) 1_{\{|\delta|< 1\} } ] \\
&\leq& C\E[ |\eta g(W) + \sigma \eta^{\frac{1}{2}} \xi_1|^3 (1+|W|^4) ]  \\
&\leq& C\eta^{\frac{3}{2}} \E[1+|W|^7]  \\
&\leq& C\eta^{\frac{3}{2}},
\end{eqnarray*}
where the third inequality holds from small $\eta<1$.

It follows from \eqref{e:2f}  in Lemma \ref{lem:Lipregf} and Proposition \ref{p:GeneralErgodicEM} that for small $\eta<1$
\begin{eqnarray*}
&& \mathbb{E} \int_0^1 \int_0^1 r \langle \delta \delta^{\prime}, \nabla^2 f(W+\tl{r} r \delta )-\nabla^2 f(W) \rangle_{\textrm{HS}} 1_{ \{ |\delta|\geq 1 \} } \dif\tl{r} \dif r  \\
&\leq& C\E[ |\delta|^2 (1+|W|^3+|\delta|^3) 1_{ \{ |\delta|\geq 1 \}  }  ]  \\
&\leq& C\E[ |\delta|^2 1_{ \{ |\delta|\geq 1 \}  }  ]
+C\E[ |\delta|^2 |W|^3 1_{ \{ |\delta|\geq 1 \}  }  ]
+C\E[ |\delta|^5  ]  \\
&\leq& C(\E[ |\delta|^4])^{\frac{1}{2}}  \PP^{\frac{1}{2}} ( |\delta|\geq 1 )
+C (\E[|W|^6])^{\frac{1}{2}} ( \E[ |\delta|^8] )^{\frac{1}{4}} \PP^{\frac{1}{4}}( |\delta|\geq 1 )
+C\E[ |\delta|^5  ]  \\
&\leq& C\eta^{\frac{3}{2}},
\end{eqnarray*}
where the last inequality holds from Chebyshev's inequality and Proposition \ref{p:GeneralErgodicEM}, that is,
\begin{eqnarray*}
\PP( |\delta|\geq 1)
\ \leq \ \E|\delta|^k
\ \leq \ C\eta^{\frac{k}{2}} \E[1+|W|^k]
\ \leq \ C \eta^{\frac{k}{2}}
\end{eqnarray*}
for any integers $k\geq 1$.

(ii) By triangle inequality and using Proposition \ref{p:GeneralErgodicEM}, one has
\begin{eqnarray*}
d_W( \mathcal{L}(\tl{X}_N^{\eta}),\mu)
&\leq& d_W( \mathcal{L}(\tl{X}_N^{\eta}),\tl{\mu}_{\eta})
+d_W(\tl{\mu}_{\eta},\mu)
\ \leq \  C\eta^{-1} e^{-c N\eta} + C\eta^{\frac{1}{2}}.
\end{eqnarray*}
Taking $\eta=\delta^2$ and $N:=O(\delta^{-2} \log \delta^{-1})$, one has $\eta^{-1} e^{-cN\eta} \leq \delta$, it implies that
\begin{eqnarray*}
d_W( \mathcal{L}(\tl{X}_N^{\eta}),\mu)
\ \leq \ \delta.
\end{eqnarray*}
The proof is complete.
\end{proof}

\begin{remark}
Recall the diffusion steady-state approximation of the same model was studied in \cite{BD1}, the diffusion $X_t$ can be regarded as a diffusion approximation of the diffusion-scaled process $\hat{X}^n(t) = \frac{X^n(t) - n\gamma}{\sqrt{n}}$, which admits a unique ergodic measure $\hat \mu^n$ from \cite{Gur1}. The result in  \cite{BD1} implies
\begin{equation}  \label{e:BrDa-0}
d_W(\hat \mu^n, \mu) \ \leq  \ \frac{C}{\sqrt{n}}.
\end{equation}
We apply the EM scheme developed in this paper, by Theorem \ref{thm:DDE} (i) and \eqref{e:BrDa-0}, we have
$$
d_W(\hat \mu^n, \tilde{\mu}_\eta) \ \leq \ C (n^{-1/2} + \eta^{1/2}) \ \leq \ Cn^{-1/2},
$$
as $\eta$ is sufficiently small (say $\eta=n^{-1}$). Moreover, by Theorem \ref{thm:DDE} (ii), we can only run the EM scheme $N=O(n \log n)$ steps, we can obtain
$$d_W( \mathcal{L}(\tl{X}_N^{\eta}),\hat \mu^n) \ \leq \ C n^{-1/2}.$$
\end{remark}

\section{Moment estimate for weighted occupation time} \label{app:occupation}

 We introduce in this section a weighted occupation time and study its moment estimate,.
 This result is used in the proof of Lemma \ref{l:XeCon}, and we expect that the method in this section may be used in future research. 

The weighted occupation time $L_t^{\e,x}$ is defined as  % $\bar{L}_t^{\e,x}$ and .
\begin{eqnarray*}
% \bar{L}_t^{\e,x} &=& \int_0^t [\rho_{\e}( {\rm e}^{\prime} X_s^x) - ({\rm e}^{\prime} X_s^x)^+] \dif s, \\
L_t^{\e,x} &=& \int_0^t \left[-\frac{1}{\e^2} ({\rm e}^{\prime} X_s^x)^2 +1 \right] 1_{ \{ |{\rm e}^{\prime} X_s^x| \leq \e \} } \dif s.
\end{eqnarray*}
We know $L_t^{\e,x} \geq 0$ for all $t\geq 0$. We call $L_t^{\e,x}$ weighted occupation time because they can all be represented as an integral over an occupation measure. We have
\begin{eqnarray*}
L^{\e,x}_t & = & \int_0^t \psi({\rm e}' X^x_s) 1_{\{|{\rm e}' X^x_s| \le \e\}} \dif s  \ = \  \int_{|y| \le \e} \psi(y) A^{x}_t(\dif y),
\end{eqnarray*}
where $\psi(y)=1-\frac{y^2}{\e^2} $ and $A^{x}_t(\cdot):=\int_0^t  \delta_{  {\rm e}' X^{x}_s}   (\cdot) \dif s$ with $\delta_z(\cdot)$ being the delta function of a given $z$. $A^x_t(\cdot)$ is called the occupation measure of ${\rm e}' X^x_s$ over time $[0,t]$, if $\psi(x)=1$, then $L^{\e,x}_t$ will be occupation time of $({\rm e}' X^x_s)_{0 \le s \le t}$ on the set $\{|y| \le \e\}$.

\begin{proposition}\label{lem:occupation}
For $L_t^{\e,x}$ defined above, there exists some positive constant $C$ such that
\begin{eqnarray*}
% \E \bar{L}_t^{\e,x} &\leq& C\e^2 e^{\frac{C_2}{2} t} (1+|x|)(1+t), \\
\E L_t^{\e,x} &\leq&  C\e e^{\frac{C_2}{2} t} (1+|x|)(1+t).
\end{eqnarray*}
\end{proposition}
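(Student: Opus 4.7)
The plan is to recover $L_t^{\e,x}$ as the Lebesgue (bracket) term in an It\^o expansion of a carefully chosen one-dimensional $\mathcal{C}^2$ function evaluated along the scalar process ${\rm e}' X_t^x$. Once $L_t^{\e,x}$ is isolated in this way, the required bound reduces to (a) elementary sup-norm bounds on the auxiliary function and its derivative, (b) the first-moment estimate for $X_s^x$ supplied by Lemma~\ref{lem:XXem}, and (c) the linear growth of $g$.

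Set $\phi(y):=(1-y^2/\e^2)1_{\{|y|\le \e\}}$; since $\phi(\pm\e)=0$, $\phi$ is continuous on $\R$. Define
\[
G(y)\ :=\ \int_0^y \int_0^u \phi(z)\dif z\dif u, \qquad y\in\R,
\]
so that $G\in\mathcal{C}^2(\R,\R)$ with $G''=\phi$ and $G'(y)=\int_0^y\phi(z)\dif z$. A direct computation gives $\sup_{y\in\R}|G'(y)|\le 2\e/3$, and consequently $|G(y)|\le (2\e/3)|y|$. Writing ${\rm e}' X_t^x={\rm e}' x +\int_0^t {\rm e}'g(X_s^x)\dif s+ {\rm e}'\sigma B_t$ and applying It\^o's formula to $G({\rm e}' X_t^x)$,
\begin{align*}
G({\rm e}' X_t^x) \ =\ G({\rm e}' x) &+\int_0^t G'({\rm e}' X_s^x)\, {\rm e}' g(X_s^x)\dif s \\
&+\int_0^t G'({\rm e}' X_s^x)\,{\rm e}'\sigma \dif B_s + \frac{{\rm e}'\sigma\sigma'{\rm e}}{2}\,L_t^{\e,x},
\end{align*}
where the last identity uses $G''=\phi$ together with the definition of $L_t^{\e,x}$.

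Since $G'$ is bounded the stochastic integral is a true martingale with zero mean. Taking expectations and rearranging,
\[
\E L_t^{\e,x}\ =\ \frac{2}{{\rm e}'\sigma\sigma'{\rm e}}\Bigl[\E G({\rm e}' X_t^x)-G({\rm e}' x)-\E\!\int_0^t G'({\rm e}' X_s^x)\,{\rm e}' g(X_s^x)\dif s\Bigr].
\]
By the non-degeneracy assumption stated in the introduction, ${\rm e}'\sigma\sigma'{\rm e}\ge c|{\rm e}|^2>0$. Using $|G(y)|\le C\e(1+|y|)$, $|G'|\le C\e$, the linear growth $|g(x)|\le \tl C_{\rm op}(1+|x|)$, and the moment bound $\E|X_s^x|\le e^{C_2 s/2}(1+|x|)$ (which follows from Lemma~\ref{lem:XXem} with $m=2$ via Cauchy--Schwarz), each of the three terms on the right is bounded by $C\e(1+|x|)(1+t)e^{C_2 t/2}$, which yields the claimed estimate.

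The only conceptual step is identifying the weight $\phi$ as the second derivative of an antiderivative $G$ whose $\mathcal C^0$ and $\mathcal C^1$ norms are both of order $\e$; the crucial point is that $\phi$ vanishes continuously at $\pm\e$, so $G\in\mathcal C^2$ and classical It\^o applies without any Tanaka-type correction or mollification. The non-differentiability of the drift $g$ at $\{{\rm e}' x=0\}$ plays no role here, because $g$ enters the estimate only through its pointwise values (via its linear-growth bound), not through any derivative.
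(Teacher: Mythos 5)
Your proof is correct and follows essentially the same route as the paper: the paper defines an explicit piecewise auxiliary function $\phi_\e$ (Eq.~\eqref{e:rhobar0}) satisfying $\ddot{\phi}_\e(y) = (1-y^2/\e^2)1_{\{|y|\le\e\}}$ and applies It\^o's formula to $\phi_\e(\mathrm{e}'X_t^x)$, and your double-primitive $G$ is in fact exactly that same function, since both have the same second derivative and both vanish, together with their first derivative, at the origin. The bounds $|G'|\le 2\e/3$, $|G(y)|\le (2\e/3)|y|$, the use of non-degeneracy for $\mathrm{e}'\sigma\sigma'\mathrm{e}>0$, the linear growth of $g$, and the $m=2$ moment estimate from Lemma~\ref{lem:XXem} via Cauchy--Schwarz all match the paper's argument.
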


\begin{proof}[Proof of Proposition \ref{lem:occupation}]
%(i) Denote 
%\begin{eqnarray}\label{e:rhobar0}
%\bar{\rho}_{\e}(y)=
%\left\{
%\begin{array}{lllll}
%-\frac{1}{20}\e^2 y-\frac{1}{96}\e^3,& \text{if } y<-\e,\\
%-\frac{1}{480\e^3}y^6 + \frac{1}{32\e}y^4 + \frac{1}{12}y^3+\frac{3\e}{32}y^2,  & \text{if }   -\e\leq y\leq 0, \\
%-\frac{1}{480\e^3}y^6 + \frac{1}{32\e}y^4 - \frac{1}{12}y^3+\frac{3\e}{32}y^2, &\text{if }  0<y\leq \e,\\
%\frac{1}{20}\e^2 y-\frac{1}{96}\e^3, & \text{if } y > \e.
%\end{array}
%\right.
%\end{eqnarray}
%
%It is easy to check that
%$$\ddot{\bar{\rho}}_{\e}({\rm e}'x)=\rho_{\e}({\rm e}'x)-({\rm e}'x)^+, \ \ \ \ x\in \R^d,$$
%and 
%$|\bar{\rho}_{\e}(y)|\leq C\e^2 |y|+C\e^3$, $|\dot{\bar{\rho}}_{\e}(y)|\leq C\e^2$ and $|\ddot{\bar{\rho}}_{\e}(y)|\leq C\e$ for all $y\in \R$ and the positive constant $C$ is independent of $\e$. 
%
%
%Applying It\^{o}'s formula to the function $\bar{\rho}_{\e}$, one has
%\begin{eqnarray*}
%\bar{\rho}_{\e}( {\rm e^{\prime}} X_t^x)
%&=& \bar{\rho}_{\e}( {\rm e^{\prime} } x)
%+\int_0^t \dot{\bar{\rho}}_{\e}({\rm e^{\prime}} X_s^x) {\rm e^{\prime}}g(X_s^x) \dif s + \int_0^t \dot{\bar{\rho}}_{\e}( {\rm e^{\prime}}X_s^x ) {\rm e^{\prime}}\sigma \dif B_s + \frac{|\sigma^{\prime} {\rm e}|^2}{2} \bar{L}_t^{\e,x},
%\end{eqnarray*}
%which implies that
%\begin{eqnarray*}
%\E \bar{L}_t^{\e,x}  &\leq& C \E |\bar{\rho}_{\e}( {\rm e^{\prime}} X_t^x)|
%+|\bar{\rho}_{\e}( {\rm e^{\prime} } x)|
%+\E |\int_0^t \dot{\bar{\rho}}_{\e}({\rm e^{\prime}} X_s^x) {\rm e^{\prime}}g(X_s^x) \dif s| \\
%&\leq& C\e^2 e^{\frac{C_2}{2}t}(1+|x|)(1+t),
%\end{eqnarray*}
%where the last inequality hold from Lemma \ref{lem:XXem}.

Denote 
\begin{eqnarray}\label{e:rhobar0}
\phi_{\e}(y)=
\left\{
\begin{array}{lllll}
\frac{2}{3}\e y -\frac{1}{4}\e^2,& \text{if } y>\e,\\
-\frac{1}{12\e^2}y^4 + \frac{1}{2}y^2,  & \text{if }   -\e\leq y\leq \e, \\
-\frac{2}{3}\e y-\frac{1}{4}\e^2, & \text{if } y <-\e.
\end{array}
\right.
\end{eqnarray}

It is easy to check that
$$\ddot{\phi}_{\e}(y) \ = \ \left[ \frac{-1}{\e^2}y^2+1 \right]1_{ \{ |y|\leq \e \}  },$$
and 
$|\phi_{\e}(y)|\leq C\e |y|+C\e^2$, $|\dot{\phi}_{\e}(y)|\leq C\e$ and $|\ddot{\phi}_{\e}(y)|\leq 1$ for all $y\in \R$ and the positive constant $C$ is independent of $\e$. 

Applying It\^{o}'s formula to the function $\phi_{\e}$, one has
\begin{eqnarray*}
\phi_{\e}( {\rm e^{\prime}} X_t^x)
&=& \phi_{\e}( {\rm e^{\prime} } x)
+\int_0^t \dot{\phi}_{\e}({\rm e^{\prime}} X_s^x) {\rm e^{\prime}}g(X_s^x) \dif s + \int_0^t \dot{\phi}_{\e}( {\rm e^{\prime}}X_s^x ) {\rm e^{\prime}}\sigma \dif B_s + \frac{|\sigma^{\prime} {\rm e}|^2}{2} L_t^{\e,x},
\end{eqnarray*}
which implies that
\begin{eqnarray*}
\E L_t^{\e,x}  &\leq& C \E |\phi_{\e}( {\rm e^{\prime}} X_t^x)|
+|\phi_{\e}( {\rm e^{\prime} } x)|
+\E \left|\int_0^t \dot{\phi}_{\e}({\rm e^{\prime}} X_s^x) {\rm e^{\prime}}g(X_s^x) \dif s \right|  \\
& \leq& C\e e^{\frac{C_2}{2}t}(1+|x|)(1+t),
\end{eqnarray*}
where the last inequality holds from Lemma \ref{lem:XXem}. The proof is complete.
\end{proof}

\section{Proofs of Theorems \ref{thm:CLT} and \ref{thm:MDP}} \label{s:CLTMDP}
  We firstly prove Theorem \ref{thm:CLT} by It\^{o}'s formula and martingale CLT, and then prove Theorem \ref{thm:MDP} by a criterion by Wu \cite{WLM1}.
\begin{proof}[Proof of Theorem \ref{thm:CLT}]
For $(X_t)_{t\geq 0} $ in SDE \eqref{hSDEg} with $X_0=x$, by using It\^{o}'s formula to $f$ which is the solution to Stein's equation \eqref{e:SE}, we have
\begin{eqnarray*}
f(X_t^x)-f(x)
&=&\int_0^t \mathcal{A}f(X_s^x)\dif s+\int_0^t  (\nabla f(X_s^x))^{\prime} \sigma  \dif B_s  \nonumber \\
&=&\int_0^t [ h(X_s^x)-\mu(h)] \dif s+\int_0^t  (\nabla f(X_s^x))^{\prime} \sigma \dif B_s.
\end{eqnarray*}
	It implies that
	\begin{eqnarray*}
		\sqrt{t} \left[\frac{1}{t}\int_0^t \delta_{X_s^{x}}(h)\dif s-\mu(h) \right]
		&=&\frac{1}{\sqrt{t}}\int_0^t [ h(X_s^{x}) -\mu(h) ] \dif s  \\
		&=&\frac{1}{\sqrt{t}} [ f(X_t^x)-f(x)]-\frac{1}{\sqrt{t}} \int_0^t (\nabla f(X_s^x))^{\prime} \sigma \dif B_s.
	\end{eqnarray*}
	It follows from Lemma \ref{lem:regf} and estimate for $\E V(X_t^x)$ in \eqref{e:Vm} that 
	\begin{eqnarray*}
	\E \left| \frac{1}{\sqrt{t}} [ f(X_t^{x})-f(x)] \right|
	&\to& 0 { \ \ \rm as \ \ } t \to \infty.
	\end{eqnarray*}

It follows from Lemmas \ref{lem:regf}, \ref{lem:AV2} and  \eqref{e:BV}, that  for some $C>0$ such that
\begin{eqnarray*}
\mu( |\sigma^{\prime} \nabla f|^2  ) \ \leq \ C\mu(V^4)  \ < \ \infty .
\end{eqnarray*}

We write
	\begin{eqnarray*}
\frac{1}{\sqrt{t}} \int_0^t (\nabla f(X_s^x))^{\prime} \sigma \dif B_s
\ = \ 
\frac{1}{\sqrt{t}} \left(\int_0^1 + \int_1^2 +\cdots + \int_{\lfloor t \rfloor-1}^{\lfloor t \rfloor} + \int_{\lfloor t \rfloor}^t \right) (\nabla f(X_s^x))^{\prime} \sigma \dif B_s,
	\end{eqnarray*}
and denote
	\begin{eqnarray*}
U_i \ = \  \int_{i-1}^i (\nabla f(X_s^x))^{\prime} \sigma \dif B_s \text{ \ \ for \ \ } i=1,2,\cdots,\lfloor t \rfloor
	\end{eqnarray*}
and
	\begin{eqnarray*}
U_{\lfloor t \rfloor+1} \ = \  \int_{\lfloor t \rfloor}^t (\nabla f(X_s^x))^{\prime} \sigma \dif B_s.
	\end{eqnarray*}
We know $U_i$ are martingale differences and $\E U_i^2 <\infty$ for all $i=1,2,\cdots,\lfloor t \rfloor+1$. We claim that
\begin{eqnarray}
\lim_{t \to \infty} \E\left( \max_{1\leq i \leq \lfloor t \rfloor +1 } \frac{1}{\mu( |\sigma^{\prime} \nabla f|^2 ) t } |U_i|^2 \right) \ = \ 0, \label{e:CLT1} \\
\lim_{t \to \infty} \E \left| \sum_{i=1}^{\lfloor t \rfloor +1 } \frac{1}{\mu( |\sigma^{\prime} \nabla f|^2 ) t } |U_i|^2 -1 \right|^2 \ = \ 0. \label{e:CLT2}
\end{eqnarray}

We know \eqref{e:CLT1} and \eqref{e:CLT2} imply
\begin{eqnarray*}
\E\left( \max_{1\leq i \leq \lfloor t \rfloor +1 } \frac{1}{ \sqrt{\mu( |\sigma^{\prime} \nabla f|^2 ) t} } |U_i| \right) \to 0
\text{ \ \ and \ \ }
\sum_{i=1}^{\lfloor t \rfloor +1 } \frac{1}{\mu( |\sigma^{\prime} \nabla f|^2 ) t } |U_i|^2 \stackrel{p}{\longrightarrow} 1
	\end{eqnarray*}
as $t$ goes to infinity.  By using the martingale CLT in \cite[Theorem 2]{SS1} due to \cite{MDL1}, one has
\begin{eqnarray*}
\frac{1}{\sqrt{\mu(|\sigma^{\prime}  \nabla f|^2) t}}\sum_{i=1}^{\lfloor t \rfloor+1} U_i
\ = \ \frac{1}{\sqrt{\mu(|\sigma^{\prime}  \nabla f|^2) t}} \int_0^t  (\nabla f(X_s^x))^{\prime} \sigma \dif B_s  \ \Rightarrow  \ \mathcal{N} (0,1) \text{ \ \ as \ \ } t \to \infty.
	\end{eqnarray*}

Then, we know
	\begin{eqnarray*}
		\sqrt{t} \left[ \frac{1}{t}\int_0^t \delta_{X_s^{x}}(h)\dif s-\mu(h) \right]
		\ \Rightarrow \ \mathcal{N} (0,\mu(|\sigma^{\prime}  \nabla f|^2))
\text{ \ \ as \ \ } t \to \infty.
	\end{eqnarray*}

It reminders to show that \eqref{e:CLT1} and \eqref{e:CLT2}. For \eqref{e:CLT1}, one has
\begin{eqnarray*} %\label{e:CLT3}
\E\left( \max_{1\leq i \leq \lfloor t \rfloor +1} |U_i|^2 \right)
&=& \E\left( \max_{1\leq i \leq \lfloor t \rfloor +1} (|U_i|^2 1_{ \{ |U_i|^2\leq \sqrt{t} \} } + |U_i|^2 1_{ \{|U_i|^2>\sqrt{t}  \}})  \right)  \nonumber \\
&\leq& \E\left( \max_{1\leq i \leq \lfloor t \rfloor+1} |U_i|^2 1_{ \{ |U_i|^2\leq \sqrt{t} \} } \right)
+ \E\left( \max_{1\leq i \leq \lfloor t \rfloor+1} |U_i|^2 1_{ \{|U_i|^2> \sqrt{t}  \}}  \right) \nonumber \\
&\leq& \sqrt{t} + \E \left( \max_{1\leq i \leq \lfloor t \rfloor +1} |U_i|^2 1_{ \{|U_i|^2> \sqrt{t} \}}  \right) \nonumber \\
&\leq& \sqrt{t} +(\lfloor t \rfloor+1) \max_{1\leq i \leq \lfloor t \rfloor+1} \E (|U_i|^2 1_{ \{|U_i|^2> \sqrt{t} \}} ),
\end{eqnarray*}
where the last inequality holds from that 
\begin{eqnarray*}
\E \left( \max_{1\leq i \leq \lfloor t \rfloor +1} |U_i|^2 1_{ \{|U_i|^2> \sqrt{t} \}}  \right)  \ &\leq& \  \sum_{i=1}^{ \lfloor t \rfloor +1} \E (|U_i|^2 1_{ \{|U_i|^2> \sqrt{t} \}} ).
\end{eqnarray*}

Thus, we obtain
\begin{eqnarray*}
\E \left( \max_{1\leq i \leq \lfloor t \rfloor+1 } \frac{1}{ \mu( |\sigma^{\prime} \nabla f|^2 ) t} | U_i |^2 \right)
&\leq& \frac{1}{ \mu( |\sigma^{\prime} \nabla f|^2 ) t} \left[ \sqrt{t} +(\lfloor t \rfloor+1) \max_{1\leq i \leq \lfloor t \rfloor +1} \E (|U_i|^2 1_{ \{|U_i|^2> \sqrt{t} \}} ) \right] \\
&\to & 0 \text{ \ \ as \ \ } t\to \infty.
\end{eqnarray*}

For \eqref{e:CLT2}, we have
\begin{eqnarray*}
&& \E \left| \sum_{i=1}^{ \lfloor t \rfloor +1 } \frac{1}{\mu( |\sigma^{\prime} \nabla f|^2 ) t } |U_i|^2 -\frac{ \lfloor t \rfloor +1 }{t} \right|^2  
\  =  \ \E \left| \frac{1}{t} \sum_{i=1}^{ \lfloor t \rfloor +1 } \left( \frac{1}{\mu( |\sigma^{\prime} \nabla f|^2 )  } |U_i|^2 - 1 \right) \right|^2 \\
&=& \frac{1}{t^2} \sum_{i=1}^{ \lfloor t \rfloor +1 }\E\left( \frac{1}{\mu( |\sigma^{\prime} \nabla f|^2 )  } |U_i|^2 - 1 \right)^2+\frac{2}{t^2}\sum_{i<j}\E\left[ ( \frac{1}{\mu( |\sigma^{\prime} \nabla f|^2 )  } |U_i|^2 - 1 ) ( \frac{1}{\mu( |\sigma^{\prime} \nabla f|^2 )  } |U_j|^2 - 1 )  \right]  \\
&=:& {\rm I} + {\rm II}.
\end{eqnarray*}

By using Burkholder-Davis-Gundy inequality, there exists some positive constant $C$ such that
\begin{eqnarray*}
\E|U_i|^4
&=& \E \left| \int_{i-1}^i (\nabla f(X_s^x))^{\prime} \sigma \dif B_s \right|^4
\ \leq \ C \E \left| \int_{i-1}^i |(\nabla f(X_s^x))^{\prime} \sigma|^2 \dif s \right|^2.
\end{eqnarray*}
Combining with Lemma \ref{lem:regf}, we know
\begin{eqnarray*}
\E\left( \frac{1}{\mu( |\sigma^{\prime} \nabla f|^2 )  } |U_i|^2 - 1 \right)^2
&=& \E \left[ \frac{1}{\mu^2(|\sigma^{\prime} \nabla f|^2)} |U_i|^4 - \frac{2}{\mu( |\sigma^{\prime}\nabla f|^2)}|U_i|^2+1 \right] \\
&\leq & C(1+\E |X_i^x|^8).
\end{eqnarray*}
Combining this with \eqref{e:Vm} in Lemma \ref{lem:AV2} and  \eqref{e:BV}, we know
\begin{eqnarray*}
{\rm I}
\ &=& \ \frac{1}{t^2} \sum_{i=1}^{ \lfloor t \rfloor +1 } \E\left( \frac{1}{\mu( |\sigma^{\prime} \nabla f|^2 )  } |U_i|^2 - 1 \right)^2  
\ \leq \  \frac{C}{t^2} \sum_{i=1}^{ \lfloor t \rfloor +1 }  (1+\E |X_i^x|^8)
\to 0, {\ \ \rm as \ \ } t \to \infty.
\end{eqnarray*}

We also have
\begin{eqnarray*}
{\rm II}
&=&  \frac{2}{t^2}\sum_{i<j}\E \left[ \left( \frac{1}{\mu( |\sigma^{\prime} \nabla f|^2 ) } |U_i|^2 - 1 \right) \left( \frac{1}{\mu( |\sigma^{\prime} \nabla f|^2 )  } |U_j|^2 - 1 \right) \right] \\
&=& \frac{2}{t^2}\sum_{i=1}^{\lfloor t \rfloor } \sum_{j=i+1}^{\lfloor t \rfloor +1} \E \left\{ \left(\frac{1}{\mu(|\sigma^{\prime} \nabla f|^2)}|U_i|^2-1\right) \E \left[ \left ( \frac{1}{\mu( |\sigma^{\prime} \nabla f|^2 )  } |U_j|^2 - 1 \right) |\mathcal{F}_i  \right]  \right\} \\
&=& \frac{2}{\mu( |\sigma^{\prime} \nabla f|^2 ) t^2}\sum_{i=1}^{\lfloor t \rfloor } \sum_{j=i+1}^{\lfloor t \rfloor +1} \E \left\{ \left(\frac{1}{\mu(|\sigma^{\prime} \nabla f|^2)}|U_i|^2-1\right) \E[(|U_j|^2 - \mu( |\sigma^{\prime} \nabla f|^2 )  ) |\mathcal{F}_i  ] \right\} \\
&=& \frac{2}{\mu( |\sigma^{\prime} \nabla f|^2 ) t^2}\sum_{i=1}^{\lfloor t \rfloor } \sum_{j=i+1}^{\lfloor t \rfloor +1} \E\left\{ \left(\frac{1}{\mu(|\sigma^{\prime} \nabla f|^2)}|U_i|^2-1\right) \int_{j-1}^j [ \E|\sigma^{\prime} \nabla f(X_s^{X_i^x})|^2 - \mu( |\sigma^{\prime} \nabla f|^2 ) ] \dif s \right\} \\
&=& \frac{2}{\mu( |\sigma^{\prime} \nabla f|^2 ) t^2}\sum_{i=1}^{\lfloor t \rfloor } \E \left\{ \left(\frac{1}{\mu(|\sigma^{\prime} \nabla f|^2)}|U_i|^2-1\right) \int_{i}^{\lfloor t \rfloor+1} [ \E|\sigma^{\prime} \nabla f(X_s^{X_i^x})|^2 - \mu( |\sigma^{\prime} \nabla f|^2 ) ] \dif s \right\}.
\end{eqnarray*}

It follows from Lemma \ref{lem:AV2} that 
\begin{eqnarray*}
{\rm II} &\leq& \frac{C}{t^2}\sum_{i=1}^{\lfloor t \rfloor } \E \left\{ \left|\frac{1}{\mu(|\sigma^{\prime} \nabla f|^2)}|U_i|^2-1 \right| \int_{0}^{\lfloor t \rfloor+1} (1+V^2(X_i^x))e^{-cs} \dif s \right\} \\
&\leq& \frac{C}{t^2}\sum_{i=1}^{\lfloor t \rfloor } \E \left\{ \left|\frac{1}{\mu(|\sigma^{\prime} \nabla f|^2)}|U_i|^2-1 \right| (1+V^2(X_i^x))  \right\} \\
&\leq& \frac{C}{t^2}\sum_{i=1}^{\lfloor t \rfloor } \left[\E \left|\frac{1}{\mu(|\sigma^{\prime} \nabla f|^2)}|U_i|^2-1\right|^2\right]^{\frac{1}{2}} [1+\E V^4(X_i^x)]^{\frac{1}{2}} \\
&\to& 0, {\ \ \rm as \ \ } t \to \infty.
\end{eqnarray*}

Combining the estimates for ${\rm I}$ and ${\rm II}$, we know
\begin{eqnarray*}
\lim_{t \to \infty} \E \left| \sum_{i=1}^{ \lfloor t \rfloor +1 } \frac{1}{\mu( |\sigma^{\prime} \nabla f|^2 ) t } |U_i|^2 -\frac{ \lfloor t \rfloor +1 }{t} \right|^2  \  =  \ 0,
\end{eqnarray*}
thus, \eqref{e:CLT2} holds.  The proof is complete.
\end{proof}

\begin{proof}[Proof of Theorem \ref{thm:MDP}]
	It follows from Lemma \ref{lem:AV2} that for any function $\tl{f}$ satisfying $\tl{f}(x)\leq 1+V(x)$ for all $x\in \R^d$ with $V$ in \eqref{e:Lypfun}, there exist positive constants $C$ and $c$ such that
\begin{eqnarray*}
| P_t \tl{f}(x) - \mu(\tl{f}) |  \ \leq \ C(1+V(x)) e^{-c t}.
	\end{eqnarray*}
	It follows from \cite[Remark (2.17)]{WLM2} that $P_t$ has a spectral gap near its largest eigenvalue $1$ which implies that $1$ is an isolate eigenvalue. Since $P_t c = c$ for all $t>0$, one has the property that $1$ is an eigenvalue of $P_t$ for all $t>0$. If there exists some function $\hat{f}$ satisfying $0\neq \hat{f}(x) \leq 1+V(x)$ for all $x\in \R^d$ such that  $P_{t_1} \hat{f} = \lambda \hat{f}$ for some $t_1 > 0$ and $\lambda \in \mathbb{C}$ with $|\lambda|=1$, then $\lambda^{\tl{n}} \hat{f} = P_{\tl{n} t_1} \hat{f} \to \mu(\hat{f})$ as $\tl{n} \to \infty$, so that $\hat{f}$ has to be constant and $\lambda=1$. Thus, $1$ is a simple and the only eigenvalue with modulus $1$ for $P_t$.

Since $h\in \mathcal{B}_b(\R^d, \R)$, it follows from \cite[Theorem 2.1]{WLM1} that
$$\PP\left( \frac{1}{a_t\sqrt{t}}\int_0^t [ h(X_s^x) - \mu(h)  ]  \dif s \in \cdot \right)$$
satisfies the large deviation principle with speed $a_t^{-2}$ and rate function $I_h(z)=\frac{z^2}{2\mathcal{V}(h)}$ with 
$$\mathcal{V}(h) \ = \ 2\int_0^{\infty} \langle P_t h, h-\mu(h) \rangle_{\mu} \dif t,$$
that is,
\begin{eqnarray*}
	-\inf_{z \in A^{ {\rm o} } } I_h (z)
&\leq& \liminf_{t\to\infty}\frac{1}{a_t^2}\log \mathbb{P} \left( \frac{\sqrt{t}}{a_t}  \left[\mcl E^x_t(h)-\mu(h)\right] \in A \right)  \\
&\leq& \limsup_{t\to\infty}\frac{1}{a_t^2}\log \mathbb{P} \left( \frac{\sqrt{t}}{a_t}  \left[\mcl E^x_t(h)-\mu(h)\right] \in A \right)
\ \leq  \ -\inf_{z \in \bar{A}} I_h (z),
	\end{eqnarray*}
	where $\bar{A}$ and $A^{ {\rm o} }$ are the closure and  interior of set $A$, respectively.

We claim that
\begin{eqnarray}\label{e:Vh}
\mathcal{V}(h) \ = \ \mu(|\sigma^{\prime}\nabla f|^2 ),
	\end{eqnarray}
where $f$ is the solution to Stein's equation \eqref{e:SE}. Then the desired result holds.

Now, we show that the claim \eqref{e:Vh} holds. We have
\begin{eqnarray*}
\mathcal{V}(h)
&=& 2\int_0^{\infty} \langle P_t h, h-\mu(h) \rangle_{\mu} \dif t
\ = \ 2\int_0^{\infty} \langle P_t h - \mu(h), h \rangle_{\mu} \dif t.
	\end{eqnarray*}

In addition, we know
	\begin{eqnarray*}
\frac{1}{t}\E^{\mu} \left( \int_0^t [h(X_s^x) - \mu(h)] \dif s \right)^2
&=& \frac{1}{t}\E^{\mu} \left( \int_0^t \int_0^t [h(X_s^x)-\mu(h)][h(X_u^x)-\mu(h)] \dif u \dif s  \right) \\
&=& \frac{2}{t}\E^{\mu} \left( \int_0^t \int_0^u [h(X_s^x)-\mu(h)][h(X_u^x)-\mu(h)] \dif s \dif u \right) \\
&=& \frac{2}{t} \int_0^t \int_0^u \E^{\mu} \{ [h(X_s^x)-\mu(h)] \E \{ [h(X_u^x)-\mu(h)]| X_s^x \} \} \dif s \dif u \\
&=& \frac{2}{t} \int_0^t \int_0^u \E^{\mu} \{ h(X_s^x) \E \{ [h(X_u^x)-\mu(h)]| X_s^x \} \} \dif s \dif u,
	\end{eqnarray*}
where the third equality holds from conditional probability and the last equality holds because
	\begin{eqnarray*}
\int_0^t \int_0^u \E^{\mu} \{ \mu(h) \E \{ [h(X_u^x)-\mu(h)]| X_s^x \} \} \dif s \dif u
&=& \mu(h) \int_0^t \int_0^u \E^{\mu} \{  h(X_u^x)-\mu(h) \} \dif s \dif u
=0.
	\end{eqnarray*}

Furthermore, for all $0\leq s \leq u<\infty$, one has
\begin{eqnarray*}
\E^{\mu} \{ h(X_s^x) \E \{ [h(X_u^x)-\mu(h)]| X_s^x \} \}
&=&  \E^{\mu} \{ h(X_s^x) [ P_{u-s}h(X_s^x) - \mu(h) ]   \}   \\
&=& \int_{\R^d} h(y)[ P_{u-s}h(y)-\mu(h) ] \mu(\dif y),
	\end{eqnarray*}
which implies that
	\begin{eqnarray*}
&& \frac{2}{t} \int_0^t \int_0^u \E^{\mu} \{ h(X_s^x) \E \{ [h(X_u^x)-\mu(h)]| X_s^x \} \} \dif s \dif u  \\
&=& \frac{2}{t} \int_0^t\int_0^u \int_{\R^d} h(y)[ P_{u-s}h(y)-\mu(h) ] \mu(\dif y) \dif s \dif u \\
&=& \int_{\R^d} h(y) \frac{2}{t} \int_0^t\int_0^u [ P_{u-s}h(y)-\mu(h) ]  \dif s \dif u \mu(\dif y) \\
&=& \int_{\R^d} h(y) \frac{2}{t} \int_0^t\int_0^u [ P_{\tl{s}}h(y)-\mu(h) ]  \dif \tl{s} \dif u \mu(\dif y).
\end{eqnarray*}
By using the Hospital's rule, one has
\begin{eqnarray*}
&& \lim_{t\to\infty}\int_{\R^d} h(y) \frac{2}{t} \int_0^t\int_0^u [ P_{\tl{s}}h(y)-\mu(h) ]  \dif \tl{s} \dif u \mu(\dif y) \\
&=&\lim_{t\to\infty}2 \int_{\R^d} h(y) \int_0^{t} [P_{\tl{s}} h(y) - \mu(h)] \dif \tl{s} \mu(\dif y) \\
&=& 2 \int_{\R^d} h(y) \int_0^{\infty} [P_{\tl{s}} h(y) - \mu(h)] \dif \tl{s} \mu(\dif y) = \mathcal{V}(h),
\end{eqnarray*}
that is,
\begin{eqnarray*}
\frac{1}{t}\E^{\mu} \left( \int_0^t [h(X_s^x) - \mu(h)] \dif s \right)^2
&\to& \mathcal{V}(h)
\text{ \ \ as \ \ } t \to \infty.
	\end{eqnarray*}

By using It\^{o}'s formula and Stein's equation \eqref{e:SE}, we have
\begin{eqnarray*}
f(X_t^x)-f(x)
&=&\int_0^t \mathcal{A}f(X_s^x)\dif s+\int_0^t  (\nabla f(X_s^x))^{\prime} \sigma  \dif B_s  \nonumber \\
&=&\int_0^t [ h(X_s^x)-\mu(h)] \dif s+\int_0^t  (\nabla f(X_s^x))^{\prime} \sigma \dif B_s,
\end{eqnarray*}
which implies that
\begin{eqnarray*}
\frac{1}{t}\E^{\mu} \left( \int_0^t [h(X_s^x) - \mu(h)] \dif s \right)^2
&=& \E^{\mu} \left( \frac{1}{\sqrt{t}} [ f(X_t^x)-f(x)] - \frac{1}{\sqrt{t}} \int_0^t  (\nabla f(X_s^x))^{\prime} \sigma \dif B_s \right)^2 \\
&\to& \mu(|\sigma^{\prime} \nabla f |^2) \text{ \ \ as \ \ } t \to \infty.
	\end{eqnarray*}
Thus, we know $\mathcal{V}(h) = \mu( |\sigma^{\prime} \nabla f|^2 )$. The claim \eqref{e:Vh} holds. The proof is complete.
\end{proof}

\section{ Proof of Theorems \ref{thm:EMCLT} and \ref{thm:EMMDP} } \label{sec:EMCLTMDP}

We shall prove Theorems \ref{thm:EMCLT} and \ref{thm:EMMDP} from \cite[Theorem 9]{jones2004markov}  and \cite[Theorem 2.1]{WLM1}, respectively.
\begin{proof}[Proof of Theorem \ref{thm:EMCLT}]
From Proposition \ref{p:GeneralErgodicEM}, we know the Markov chain $(\tl{X}_{k}^{\eta})_{k\in \mathbb{N}_0}$ is exponentially ergodic under total variation distance and $\tl{\mu}_{\eta}(V^{\ell})\leq C$ for any integers $\ell$ and $V$ in \eqref{e:Lypfun}. It follows from \cite[Theorem 9]{jones2004markov} that for any function $h$ satisfying $|h|\leq V^{\ell}$ with some integer $\ell$ and any initial distribution, one has 
     \begin{eqnarray*}
\sqrt{n}\left[ \mcl E_n^{\eta, \tl{X}_0^{\eta}}(h)  -  \tl{\mu}_{\eta}(h)  \right] \  \Rightarrow \ \mathcal{N}(0, \sigma_h^2)
	\end{eqnarray*}
with $\sigma_h^2$ in \eqref{e:sigmaf2}. The proof is complete.
\end{proof}

\begin{proof}[Proof of Theorem \ref{thm:EMMDP}]
It follows from the proof of Proposition \ref{p:GeneralErgodicEM} that there exist some positive constants $C$ and $c$ such that
\begin{eqnarray*}
\|\tilde{\mathcal{P}}_{\eta}^n(x,\cdot) -\tl{\mu}_{\eta} \|_{1+V} \ := \ \sup_{|f|\leq 1+V} | \tilde{\mathcal{P}}_{\eta}^n f(x)-\tl{\mu}_{\eta}(f)|
\ \leq \ C\eta^{-1} e^{-c n\eta}.
	\end{eqnarray*}
	It follows from \cite[Remark (2.17)]{WLM2} that $\tilde{\mathcal{P}}^n_{\eta}$ has a spectral gap near its largest eigenvalue $1$ which implies that $1$ is an isolate eigenvalue. Since $\tilde{\mathcal{P}}^k_{\eta} c = c$ for all $k \in \mathbb{N}_0$, one has  that $1$ is an eigenvalue of $\tilde{\mathcal{P}}^k_{\eta}$ for all $k \in \mathbb{N}_0$. If there exists some function $\hat{f}$ satisfying $0\neq \hat{f}(x) \leq 1+V(x)$ for all $x\in \R^d$ such that  $\tilde{\mathcal{P}}^{k_1}_{\eta} \hat{f} = \lambda \hat{f}$ for some $k_1 \in \mathbb{N}_0$ and $\lambda \in \mathbb{C}$ with $|\lambda|=1$, then $\lambda^{\tl{n}} \hat{f} = \tilde{\mathcal{P}}^{\tl{n} k_1}_{\eta} \hat{f} \to \mu(\hat{f})$ as $\tl{n} \to \infty$, so that $\hat{f}$ has to be constant and $\lambda=1$. Thus, $1$ is a simple and the only eigenvalue with modulus $1$ for $\tilde{\mathcal{P}}_{\eta}^n$.

Since $h\in \mathcal{B}_b(\R^d, \R)$, it follows from \cite[Theorem 2.1]{WLM1} that
$$\PP\left( \frac{\sqrt{n}}{a_n}  \left[ \mcl E_n^{\eta,x}(h) - \tl{\mu}_{\eta}(h)\right]  \in \cdot \right)$$
satisfies the large deviation principle  with speed $a_n^{-2}$ and rate function $I_h(z)=\frac{z^2}{2\mathcal{V}(h)}$ with $\mathcal{V}(h)$ in \eqref{e:EMVh}, that is,
	\begin{eqnarray*}
	-\inf_{z \in A^{ {\rm o} } } \frac{ z^2 }{2 \mathcal{V}(h)}
	\ &\leq& \ 
	\liminf_{n \to\infty}\frac{1}{a_n^2}\log \mathbb{P} \left( \frac{\sqrt{n}}{a_n}  \left[ \mcl E_n^{\eta,x}(h) - \tl{\mu}_{\eta}(h)\right]    \in A\right)   \\
	\ &\leq& \
	\limsup_{ n \to\infty}\frac{1}{a_n^2}\log \mathbb{P} \left( \frac{\sqrt{n}}{a_n}  \left[ \mcl E_n^{\eta,x}(h) - \tl{\mu}_{\eta}(h)\right]  \in A\right)
\ \leq \ -\inf_{z \in \bar{A}} \frac{ z^2 }{ 2 \mathcal{V}(h) },
	\end{eqnarray*}
	where $\bar{A}$ and $A^{ {\rm o} }$ are the closure and  interior of set $A$, respectively. The proof is complete.
\end{proof}

\begin{appendix}
\section{Proof of Proposition \ref{p:GeneralErgodicEM} }  \label{App:GeneralErgodicEM}

It follows from \cite[Theorem 1]{DG1} that there exists a positive definite matrix $\tilde{Q}=( \tilde{Q}_{ij}  )_{d\times d}$ with $\sum_{i,j=1}^d |\tl{Q}_{ij}|=1$ such that
\begin{eqnarray*}
\tilde{Q}(-R) + (-R)^{\prime} \tilde{Q} \ &<&  \ 0, \\
\tilde{Q}(-(I-p{\rm e}^{\prime})R) +(-R^{\prime}(I-{\rm e}p^{\prime}))\tilde{Q}
\ &\leq& \ 0.
\end{eqnarray*}
Recall that the function $\tl{V}\in \mathcal{C}^2(\R^d,\R_+)$ be constructed in \cite[Eq. (5.24)]{DG1}, that is,
\begin{eqnarray*} %\label{e:LypfuntlV}
\tl{V}(y)&=&({\rm e}^\prime y)^2+\kappa[y-p\phi({\rm e}^\prime y)]^\prime \tilde{Q}[y-p\phi({\rm e}^\prime y)], \qquad  \forall y\in \R^d,
\end{eqnarray*}
where $\kappa$ is a positive constant and $\phi\in \mathcal{C}^2(\R, \R)$ is a real-valued function which is defined as below:
\begin{eqnarray*} %\label{e:phip}
\phi(z) &=&
\left\{
\begin{array}{lll}
z,                                   &  \text{if }  z \ge 0,  \\
-\frac{1}{2},                        & \text{if }   z \leq -1, \\
-\frac{1}{2}z^4 - z^3 + z,           &\text{if }     -1<z<0.
\end{array}
\right.
\end{eqnarray*}
In addition, we know
\begin{eqnarray*}
(\nabla \tl{V}(y) )^{\prime} &=& 2({\rm e}^{\prime} y) {\rm e}^{\prime} + 2\kappa [ y^{\prime} - p^{\prime} \phi ({\rm e}^{\prime} y) ] \tilde{Q} [ I - p {\rm e}^{\prime} \dot{\phi}({\rm e}^{\prime} y)] \text{ \ \ for all \ \ } y \in \R^d.
\end{eqnarray*}
Then there exists some positive constant $C$ such that
\begin{eqnarray}\label{e:NtlV}
|\nabla \tl{V}(y)|
\ &\leq& \ C(1+|y|) \text{ \ \ for all \ \ } y \in \R^d.
\end{eqnarray}
It follows from \cite[proof of Theorem 3]{DG1} that  there exist some positive constants $\hat{C}_1, \hat{C}_2$, $\hat{c}_1$ and $\hat{c}_2$ such that
\begin{eqnarray}\label{e:BtlV}
\hat{c}_1|y|^2 - \hat{c}_2
\  \leq \
\tl{V}(y)
\ \leq  \ \hat{C}_1|y|^2+\hat{C}_2
\text{ \ \ for all \ \ } y \in \R^d.
\end{eqnarray}
It follows from \cite[proof of Proposition 4]{DG1}, there exist some positive constants $c_1$ and $\check{c}_1$ such that
\begin{eqnarray}\label{e:AtlV}
\mathcal{A}\tl{V}(y)
\ &\leq& \ -c_1 \tl{V}(y)+\check{c}_1
\text{ \ \ for any \ \ }y \in \R^d,
\end{eqnarray}
where $\mathcal{A}$ is in \eqref{e:A}.

Let the Lyapunov function $V$ be defined as
\begin{eqnarray}\label{e:Lypfun}
V(y) &=& \tl{V}(y)+\hat{c}_2 
\ = \ ({\rm e}^\prime y)^2+\kappa[y-p\phi({\rm e}^\prime y)]^\prime \tilde{Q}[y-p\phi({\rm e}^\prime y)] + \hat{c}_2, \qquad  \forall y\in \R^d,
\end{eqnarray}
where $\hat{c}_2$ is in \eqref{e:BtlV} and
\begin{eqnarray}\label{e:BV}
\hat{c}_1|y|^2
\ \leq \
V(y)
\ \leq  \ \hat{C}_1|y|^2+\hat{C}_2+\hat{c}_2
\text{ \ \ for all \ \ } y \in \R^d.
\end{eqnarray}
Furthermore, we know
\begin{eqnarray*}
\nabla V(y) \  =  \ \nabla \tl{V}(y),
\qquad
\nabla^2 V(y) \ = \ \nabla^2 \tl{V}(y)
\text{ \ \ for all \ \ } y \in \R^d,
\end{eqnarray*}
thus, \eqref{e:NtlV} also holds for function $V$, that is,
\begin{eqnarray}\label{e:NV}
|\nabla V(y)|
&\leq& C(1+|y|) \text{ \ \ for all \ \ } y \in \R^d,
\end{eqnarray}
and $\mathcal{A} V(y)=\mathcal{A}\tl{V}(y)$ for all $y \in \R^d$.
Combining with \eqref{e:AtlV}, one has
\begin{eqnarray}\label{e:AV}
\mathcal{A} V(y)
&\leq& -c_1 (\tl{V}(y)+\hat{c}_2)+\check{c}_1 +c_1\hat{c}_2 
\ \leq \ -c_1 V(y)+\breve{c}_1
\text{ \ \ for any \ \ }y \in \R^d
\end{eqnarray}
with $\breve{c}_1=\check{c}_1 +c_1\hat{c}_2$.

\begin{lemma}\label{lem:AV2}
For $\mathcal{A}$ in \eqref{e:A}, $V$ in \eqref{e:Lypfun} and integers $\ell\geq 1$, there exist some positive constants $\breve{c}_{\ell}$ depending on $\ell$ such that
\begin{eqnarray*}
\mathcal{A} V^{\ell}(x)
&\leq& -c_1 V^{\ell}(x)+\breve{c}_{\ell},
\end{eqnarray*}
and 
\begin{eqnarray}\label{e:Vm}
\mathbb{E} V^{\ell}(X_t^{x})
\ &\leq& \ e^{-c_1 t }V^{\ell}(x)+\frac{\breve{c}_{\ell}(1-e^{-c_1t})}{c_1},
\quad \forall t\geq 0.
\end{eqnarray}
% there exists the invariant measure, denote by $\mu$, for process $(X_t)_{t\geq 0}$.
In addition, $\mu(V^{\ell})\leq \frac{\breve{c}_{\ell}}{c_1}$ and $\mu(|\cdot|^{2\ell})\leq C$ where the constant $C$ depends on $\ell$. Furthermore, for any positive integers $\ell$ and probability measure $\nu$ satisfying $\nu(V^{\ell})<\infty$, one has
\begin{eqnarray*}
d_W(P_{t}^* \nu, \mu)  \ &\leq& \ C(1+\nu(V)) e^{-c t},  \\
\| P_{t}^* \nu - \mu\|_{{\rm TV}}  \ &\leq&  \ \| P_{t}^* \nu - \mu\|_{{\rm TV, V^{\ell}}}  \ \leq  \ C(1+\nu(V^{\ell})) e^{-c t}.
\end{eqnarray*}

\end{lemma}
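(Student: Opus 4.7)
The plan has four steps, from the generator estimate to the moment bound, then to integrability against $\mu$, and finally to exponential ergodicity.

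\textbf{Step 1 (generator bound on $V^\ell$).} I would apply the chain rule for the diffusion generator $\mathcal A$ in \eqref{e:A}:
\begin{equation*}
\mathcal A V^{\ell}(x)
\;=\; \ell V^{\ell-1}(x)\,\mathcal A V(x)
\;+\; \tfrac{1}{2}\ell(\ell-1)V^{\ell-2}(x)\,\bigl|\sigma^{\prime}\nabla V(x)\bigr|^{2}.
\end{equation*}
Substituting \eqref{e:AV} for $\mathcal A V$, and using \eqref{e:NV} together with the lower bound $V(x)\ge \hat c_1|x|^2$ from \eqref{e:BV} to obtain $|\sigma^{\prime}\nabla V(x)|^{2}\le C(1+V(x))$, I get
\begin{equation*}
\mathcal A V^{\ell}(x)\;\le\;-\,\ell c_{1}V^{\ell}(x)+C_{\ell}\bigl(V^{\ell-1}(x)+V^{\ell-2}(x)\bigr).
\end{equation*}
Since $V\ge \hat c_1\wedge 1>0$ is bounded below, Young's inequality absorbs the lower-order terms into the dominant one at the cost of replacing $\ell c_1$ by any smaller constant, in particular $c_1$, producing a large additive constant $\breve c_\ell$. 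This gives the first displayed inequality.

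\textbf{Step 2 (moment bound and $\mu(V^\ell)$).} Applying It\^o's formula to $V^{\ell}(X_t^x)$ with a localizing sequence of stopping times $\tau_n=\inf\{t:|X_t^x|\ge n\}$ and the generator bound from Step~1, the local-martingale part disappears after taking expectations, yielding
\begin{equation*}
\mathbb E\bigl[V^{\ell}(X_{t\wedge\tau_n}^x)\bigr]\;\le\;V^{\ell}(x)+\int_0^{t}\!\bigl(-c_1\,\mathbb E[V^{\ell}(X_{s\wedge\tau_n}^x)]+\breve c_{\ell}\bigr)\,\dif s.
\end{equation*}
A Gr\"onwall argument after letting $n\to\infty$ (using non-explosion from \eqref{e:AV} and Fatou) delivers \eqref{e:Vm}. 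To derive $\mu(V^{\ell})\le \breve c_{\ell}/c_{1}$, I first use Step~1 with truncations $V^{\ell}\wedge M$ and the invariance identity $\int \mathcal A(V^{\ell}\wedge M)\,\dif\mu=0$, or equivalently integrate \eqref{e:Vm} against $\mu$ (by stationarity $\mathbb E^{\mu}V^{\ell}(X_t)=\mu(V^{\ell})$) and let $t\to\infty$. The bound on $\mu(|\cdot|^{2\ell})$ then follows from $V(y)\ge\hat c_1|y|^2$.

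\textbf{Step 3 (exponential ergodicity).} The assumption $\xi^{\prime}\sigma\sigma^{\prime}\xi\ge c|\xi|^{2}$ ensures that $(X_{t})$ is a uniformly elliptic non-degenerate diffusion, so every compact set is petite and the transition densities are strictly positive on compact sets; this is the standard minorization condition. Combined with the Lyapunov/Foster drift inequality $\mathcal A V^{\ell}\le -c_{1}V^{\ell}+\breve c_{\ell}$ from Step~1, the Down--Meyn--Tweedie theorem for continuous-time Markov processes (see \cite{MT2}) yields the $V^{\ell}$-geometric ergodicity
\begin{equation*}
\|P_{t}^{\ast}\nu-\mu\|_{\mathrm{TV},V^{\ell}}\;\le\;C(1+\nu(V^{\ell}))e^{-ct},
\end{equation*}
and the ordinary TV bound follows since $\|\cdot\|_{\mathrm{TV}}\le\|\cdot\|_{\mathrm{TV},V^{\ell}}$. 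The Wasserstein-1 bound is then an immediate consequence of \eqref{e:dWandTV}: because $1+V(y)\ge \hat c_1|y|^{2}\ge c'|y|$ for a suitable $c'>0$, the estimate with $\ell=1$ gives
\begin{equation*}
d_{W}(P_{t}^{\ast}\nu,\mu)\;\le\;C\|P_{t}^{\ast}\nu-\mu\|_{\mathrm{TV},V}\;\le\;C(1+\nu(V))e^{-ct}.
\end{equation*}

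\textbf{Main obstacle.} The arithmetic in Steps~1--2 is routine given the drift inequality \eqref{e:AV} already supplied by \cite{DG1}. The step I regard as most delicate is Step~3: one must verify carefully that the piecewise-linear drift together with non-degenerate $\sigma$ satisfies the precise hypotheses of the Meyn--Tweedie framework (in particular, the existence of a $T$-skeleton or small set), rather than appealing to it as a black box. Once the minorization is in place, the conversion from $V^{\ell}$-geometric ergodicity into weighted TV and Wasserstein-1 estimates is mechanical via \eqref{e:dWandTV}.
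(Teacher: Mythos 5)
Your proposal follows essentially the same route as the paper's proof: Step~1 is verbatim the paper's chain-rule plus Young's-inequality argument, Step~2's Gr\"onwall step is the paper's, and Step~3's end-game (weighted TV $\Rightarrow$ ordinary TV and Wasserstein via \eqref{e:dWandTV} with $\ell=1$) is also identical.

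Two small remarks. First, on $\mu(V^{\ell})<\infty$: your first suggested route (``integrate \eqref{e:Vm} against $\mu$ by stationarity, $\mathbb E^{\mu}V^{\ell}(X_{t})=\mu(V^{\ell})$'') is circular, since that identity is only meaningful once $\mu(V^{\ell})$ is known to be finite; your second route ($V^{\ell}\wedge M$ in an invariance identity) also requires smoothing before $\mathcal A$ can act. The paper side-steps both issues by multiplying \eqref{e:Vm} by a bounded continuous cutoff $\chi(|X^{x}_{t}|/L)$, sending $t\to\infty$ using the already-known weak convergence $X^{x}_{t}\Rightarrow\mu$ (so no integrability of $V^{\ell}$ is presumed), and only then sending $L\to\infty$ by monotone convergence. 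Second, on Step~3: the paper does not re-establish the Meyn--Tweedie minorization from scratch but simply invokes the calculation in the proof of Theorem~3 of \cite{DG1} (which already carries out the drift-plus-minorization argument for exactly this piecewise-OU diffusion) and combines it with \eqref{e:Vm}. Your observation that verifying the small-set/skeleton hypotheses is the delicate part is accurate, and is precisely what is outsourced to \cite{DG1}.
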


%\proof{Proof of Proposition \ref{pro:ExpEe}.}
%The exponential ergodicity for process $(X_t)_{t\geq 0}$ is shown in \cite[Theorem 3]{DG1}. We omit some details for the calculations of exponential ergodicity under $\| \cdot \|_{\rm TV,V^{\ell}}$ distance with any positive integers $\ell$. With similar calculations for \cite[proof of Theorem 3]{DG1} and combining \eqref{e:Vm}, then there exist positive constants $c$ and $C$ such that for any positive integers $\ell$ 
%\begin{eqnarray*}
%\| P^*_{t}\nu-\mu \|_{\rm{TV}, \rm{V}^{\ell} }
%&\leq& C (1+\nu(V^{\ell}))e^{-c t}.
%\end{eqnarray*}
%Thus, the exponential ergodicity for $(X_t)_{t\geq 0}$ in Wasserstein-1 distance holds from \eqref{e:dWandTV} and \eqref{e:BV} by taking $\ell=1$. Furthermore, one has the following inequality $\| P_{t}^* \nu-\mu \|_{ \rm{TV}} \leq   \| P_{t}^* \nu-\mu \|_{ \rm{TV}, \rm{V} }$ under $V(x)\geq 0$ for all $x\in \R^d$. The uniqueness and existence of the invariant measure $\mu$ is shown by combining the results of Lemma \ref{lem:AV2} and Proposition \ref{lem:StIe} immediately. The proof is complete.
%\endproof

In order to prove Proposition \ref{p:GeneralErgodicEM}, we firstly show that $(\tl{X}_k^{\eta})_{k\in \mathbb{N}_0}$ in \eqref{e:XD} is strong Feller and irreducible and the Lyapunov condition holds, and then we can get the exponential ergodicity from \cite{DFMS1, TT1}. Rewrite \eqref{e:XD} as
\begin{eqnarray}\label{e:reXD}
\tl{X}_{k+1}^{\eta}
&=& \tl{X}_{k}^{\eta}+g(\tl{X}_{k}^{\eta}) \eta + \sigma (B_{(k+1){\eta}} -B_{k\eta}), 
\end{eqnarray}
where $k\in \mathbb{N}_0$ and $\tl{X}^{\eta}_{0}$ is the initial value.

\begin{lemma}\label{lem:GePe}
For $\tl{\mathcal{P}}_{\eta}$ before and $f\in \mathcal{B}_b(\R^d,\R)$, one has
\begin{eqnarray*}
\| \nabla \tl{\mathcal{P}}_{\eta} f \|_{\infty} &\leq& \| f \|_{\infty} (1+C_{\rm op}\eta) \| \sigma^{-1} \|^2_{\rm op} \| \sigma \|_{\rm op}\eta^{-\frac{1}{2}}d^{\frac{1}{2}},
\end{eqnarray*}
which implies $(\tl{X}_k^{\eta})_{k\in \mathbb{N}_0}$ is strong Feller. Furthermore, $(\tl{X}_{k}^{\eta})_{k\in \mathbb{N}_0}$ is irreducible.
\end{lemma}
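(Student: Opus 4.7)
My plan is to exploit that the one-step transition is Gaussian and move the derivative from $f$ onto the density. Given $\tilde X_0^\eta = x$, the random vector $\tilde X_1^\eta$ has law $\mathcal{N}(x+\eta g(x),\,\eta\sigma\sigma')$, so I would rewrite
\begin{equation*}
\tilde{\mathcal{P}}_\eta f(x) \;=\; (K_\eta f)\bigl(x+\eta g(x)\bigr), \qquad (K_\eta f)(y)\;:=\;\mathbb{E}\bigl[f(y+\sqrt{\eta}\,\sigma \xi)\bigr],
\end{equation*}
with $\xi\sim\mathcal{N}(0,I_d)$, exhibiting $\tilde{\mathcal{P}}_\eta f$ as the composition of a smoothing Gaussian convolution $K_\eta$ with the (merely Lipschitz) map $\Phi(x):=x+\eta g(x)$. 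The desired bound will then split into a Bismut-type estimate for $\nabla K_\eta f$ and the Lipschitz constant of $\Phi$.

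For the first piece, differentiating under the integral sign against the density of $\mathcal{N}(0,\eta\sigma\sigma')$---equivalently, applying \eqref{e:BisFor} with the constant direction $v\equiv \frac{1}{\eta}\sigma^{-1}u$---yields
\begin{equation*}
\nabla_u(K_\eta f)(y) \;=\; \frac{1}{\sqrt{\eta}}\,\mathbb{E}\bigl[f(y+\sqrt{\eta}\,\sigma \xi)\,\langle(\sigma')^{-1}\xi,\,u\rangle\bigr],
\end{equation*}
from which $|\nabla K_\eta f(y)|\le \|f\|_\infty \,\|\sigma^{-1}\|_{\rm op}\,\sqrt d\,\eta^{-1/2}$ follows using $\mathbb{E}|\xi|\le\sqrt d$. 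For $\Phi$, the mollifications $g_\epsilon$ of Section \ref{Malliavin-Stein} satisfy $\|\nabla g_\epsilon\|_{\rm op}\le C_{\rm op}$ uniformly by \eqref{e:Ngeop}, so $g$ is itself Lipschitz with constant $C_{\rm op}$ and $\Phi$ with constant $1+\eta C_{\rm op}$. Composing the two Lipschitz bounds and using the trivial inequality $\|\sigma^{-1}\|_{\rm op}\le\|\sigma^{-1}\|_{\rm op}^2\|\sigma\|_{\rm op}$ (which holds because $\|\sigma\|_{\rm op}\|\sigma^{-1}\|_{\rm op}\ge 1$) produces exactly the stated form of $\|\nabla\tilde{\mathcal{P}}_\eta f\|_\infty$.

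The main subtlety is that $g$ is not differentiable on the hyperplane $\{{\rm e}'x=0\}$, so a classical chain rule is not available pointwise. I would circumvent this either by interpreting $\|\nabla\tilde{\mathcal{P}}_\eta f\|_\infty$ as the global Lipschitz constant of $\tilde{\mathcal{P}}_\eta f$ (which by Rademacher's theorem coincides with the essential supremum of its classical gradient), or by performing the whole computation with $g_\epsilon$ in place of $g$, obtaining the same bound for the corresponding operator $\tilde{\mathcal{P}}_\eta^\epsilon$, and then passing to $\epsilon\downarrow 0$ using pointwise convergence $g_\epsilon\to g$ together with the uniform estimate \eqref{e:Ngeop}.

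Finally, strong Feller is immediate because a Lipschitz function is continuous, so $\tilde{\mathcal{P}}_\eta f\in C_b(\mathbb{R}^d,\mathbb{R})$ for every $f\in\mathcal{B}_b(\mathbb{R}^d,\mathbb{R})$. For irreducibility I would simply note that $\sigma\sigma'$ is positive definite, whence $\tilde{\mathcal{P}}_\eta(x,\cdot)=\mathcal{N}(x+\eta g(x),\,\eta\sigma\sigma')$ admits a strictly positive density on all of $\mathbb{R}^d$; hence $\tilde{\mathcal{P}}_\eta(x,A)>0$ for every Borel set $A$ of positive Lebesgue measure, which gives irreducibility with respect to Lebesgue measure in a single step, uniformly in the starting point $x$.
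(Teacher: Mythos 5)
Your proof is essentially the same as the paper's: the paper differentiates the Gaussian transition density $p(\eta,x,z)$ of $\tilde X_1^{\eta,x}\sim\mathcal N(x+\eta g(x),\eta\sigma\sigma')$ directly under the integral, producing
\[
\nabla\tilde{\mathcal P}_\eta f(x)=\int_{\R^d}\eta^{-1}f(z)\,(I+\eta\nabla g(x))(\sigma\sigma')^{-1}(z-x-\eta g(x))\,p(\eta,x,z)\,\dif z,
\]
and then estimates the factors exactly as you do. Your factorisation $\tilde{\mathcal P}_\eta f=(K_\eta f)\circ\Phi$ with $\Phi(x)=x+\eta g(x)$ is just the chain rule applied to this same expression, and the Bismut-type identity you use for $\nabla K_\eta f$ is precisely the density-differentiation step in disguise, with the same source of $\eta^{-1/2}$, $\|\sigma^{-1}\|_{\rm op}$, $\|\sigma\|_{\rm op}$ and $\sqrt d$. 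Both proofs then establish irreducibility from strict positivity of the one-step Gaussian density; the paper additionally records the induction to all $k$-step kernels, which your one-step positivity already implies. One genuine improvement on your side: you explicitly flag and resolve the issue that $g$ fails to be differentiable on $\{\mathrm{e}'x=0\}$ (either by working with the Lipschitz constant of $\tilde{\mathcal P}_\eta f$ via Rademacher, or by mollifying with $g_\epsilon$ and passing to the limit using \eqref{e:Ngeop}), whereas the paper writes $\nabla g(x)$ and the bound $\|I+\eta\nabla g(x)\|_{\rm op}\le 1+C_{\rm op}\eta$ without comment; your treatment makes that step rigorous.
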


\begin{lemma}\label{lem:Xgesm}
For $(X^{x}_t)_{t\geq 0}$ and $(\tl{X}_{k}^{\eta,x})_{k\in \mathbb{N}_0}$ in \eqref{hSDEg} and \eqref{e:reXD} respectively and integers $\ell\geq 1$, there exists some positive constant $\tl{C}_{\ell}$ depending on $\ell$ not on $\eta$ such that for $0\leq s <1$,
\begin{eqnarray*}
\E |X^{x}_s-x|^{2\ell}  \ \leq \ \tl{C}_{\ell}(1+V^{\ell}(x))s^{\ell}
{\rm \ \ \ and \ \ \ }
\E |X^{x}_{\eta} - \tl{X}^{\eta,x}_{1}|^{2\ell}  \ \leq \  \tl{C}_{\ell} (1+V^{\ell}(x))\eta^{3\ell},
\end{eqnarray*}
where $V$ is in \eqref{e:Lypfun}.
\end{lemma}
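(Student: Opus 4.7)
The plan is to handle the two estimates in sequence; the second will follow cheaply from the first thanks to an exact cancellation of Brownian increments.

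For the first bound, I would write the mild form of \eqref{hSDEg}, namely $X^x_s-x=\int_0^s g(X^x_r)\dif r+\sigma B_s$, and use $|a+b|^{2\ell}\leq 2^{2\ell-1}(|a|^{2\ell}+|b|^{2\ell})$ to separate the drift and diffusion contributions. The diffusion part is handled by the standard Gaussian moment bound $\E|\sigma B_s|^{2\ell}\leq Cs^\ell$. For the drift part, Jensen's inequality on the time integral, combined with the linear growth $|g(y)|\leq \tl C_{\rm op}(1+|y|)$ and the moment estimate \eqref{e:Vm} in Lemma \ref{lem:AV2} (using \eqref{e:BV} to pass between $|y|^{2\ell}$ and $V^\ell(y)$), yields a bound of order $s^{2\ell}(1+V^\ell(x))$. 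Since $s<1$ forces $s^{2\ell}\leq s^\ell$, this combines with the diffusion estimate to give the desired $\tl C_\ell(1+V^\ell(x))s^\ell$.

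For the second bound, the key structural observation is that under the coupling in \eqref{e:reXD} one has $\tl X^{\eta,x}_1=x+g(x)\eta+\sigma B_\eta$ while $X^x_\eta=x+\int_0^\eta g(X^x_r)\dif r+\sigma B_\eta$, so the Brownian increments cancel exactly and
\[
X^x_\eta-\tl X^{\eta,x}_1 \ = \ \int_0^\eta [g(X^x_r)-g(x)]\,\dif r.
\]
Since $g$ is globally Lipschitz with constant $C_{\rm op}$ (because $y\mapsto Ry$ is linear and $y\mapsto ({\rm e}'y)^+$ is $1$-Lipschitz on $\R$), I then apply Jensen and invoke the first bound:
\[
\E|X^x_\eta-\tl X^{\eta,x}_1|^{2\ell} \leq C_{\rm op}^{2\ell}\eta^{2\ell-1}\int_0^\eta \E|X^x_r-x|^{2\ell}\,\dif r \leq C\eta^{2\ell-1}(1+V^\ell(x))\int_0^\eta r^\ell\,\dif r,
\]
and the last integral contributes $\eta^{\ell+1}/(\ell+1)$, producing the target exponent $\eta^{2\ell-1+\ell+1}=\eta^{3\ell}$.

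No step is particularly difficult, but two small points merit care: ensuring all constants are genuinely $\eta$-independent (automatic, since \eqref{e:Vm} and the Lipschitz/linear-growth constants of $g$ do not involve $\eta$, and the integration ranges sit inside $[0,1)$), and observing that the improvement from the naive exponent $\eta^\ell$ (that one would obtain if a stray Brownian term survived) to the target $\eta^{3\ell}$ is entirely due to the exact cancellation of $\sigma B_\eta$. Without the coupling in \eqref{e:reXD} that places both processes on the same Brownian path, this second bound would degrade badly; with it, both estimates reduce to routine moment computations.
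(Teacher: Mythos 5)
Your proposal is correct and follows essentially the same route as the paper's proof: for the first bound you split $X^x_s-x$ into its drift and Brownian parts, control the drift by the linear growth of $g$, the moment bound \eqref{e:Vm} from Lemma~\ref{lem:AV2} together with \eqref{e:BV}, and $s^{2\ell}\leq s^\ell$ on $[0,1)$, and control the Brownian part by Gaussian moments; for the second bound you note the exact cancellation of $\sigma B_\eta$ under the shared-path coupling in \eqref{e:reXD}, apply the Lipschitz estimate on $g$, and plug in the first bound to obtain the extra $\eta^{\ell+1}$. This matches the paper's argument step for step (the paper uses H\"older where you invoke Jensen, which is the same inequality here), so the proposal is a faithful reconstruction.
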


\begin{lemma}\label{lem:PXeD}
For $(\tl{X}^{\eta,x}_{k})_{k\in \mathbb{N}_0}$ in \eqref{e:reXD}, $V$ in \eqref{e:Lypfun} and integers $\ell\geq 1$, there exist some constants $\check{\gamma}_{\ell} \in(0,1)$ and $\check{K}_{\ell} \in[0,\infty)$, both depending on $\ell$ but not on $\eta$ such that
\begin{eqnarray*}
\tilde{\mathcal{P}}_{\eta} V^{\ell}(x) &\leq& \check{\gamma}_{\ell} V^{\ell}(x)+\check{K}_{\ell},
\end{eqnarray*}
where $\check{\gamma}_{\ell}=e^{-c_1 \eta} + \tl{C}_{\ell} \eta^{\frac{3}{2}}$ and $\check{K}_{\ell}= \frac{\breve{c}_{\ell}}{c_1}(1-e^{-c_1\eta}) + \tl{C}_{\ell} \eta^{\frac{3}{2}}$ with $c_1$ and $\breve{c}_{\ell}$ in Lemma \ref{lem:AV2}.
\end{lemma}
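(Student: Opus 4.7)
The plan is to couple the one-step EM update $\tl X^{\eta,x}_1$ with the SDE value $X^x_\eta$ and exploit the Lyapunov decay already established for the SDE in Lemma \ref{lem:AV2}. Writing
\begin{equation*}
\tilde{\mathcal P}_\eta V^\ell(x)
\;=\;\E V^\ell(X^x_\eta)+\E\bigl[V^\ell(\tl X^{\eta,x}_1)-V^\ell(X^x_\eta)\bigr],
\end{equation*}
Lemma \ref{lem:AV2} bounds the first summand by $e^{-c_1\eta}V^\ell(x)+\tfrac{\breve c_\ell}{c_1}(1-e^{-c_1\eta})$, so the whole task reduces to showing that
\begin{equation*}
\E\bigl|V^\ell(\tl X^{\eta,x}_1)-V^\ell(X^x_\eta)\bigr|
\;\le\;\tl C_\ell\,\eta^{3/2}\bigl(1+V^\ell(x)\bigr).
\end{equation*}

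To prove this, I would first note, using $V(y)\le C(1+|y|^2)$ from \eqref{e:BV} and $|\nabla V(y)|\le C(1+|y|)$ from \eqref{e:NV}, that $|\nabla V^\ell(y)|=\ell V^{\ell-1}(y)|\nabla V(y)|\le C(1+|y|^{2\ell-1})$. The mean value theorem then yields
\begin{equation*}
\bigl|V^\ell(\tl X^{\eta,x}_1)-V^\ell(X^x_\eta)\bigr|
\;\le\;C\bigl(1+|\tl X^{\eta,x}_1|^{2\ell-1}+|X^x_\eta|^{2\ell-1}\bigr)\bigl|\tl X^{\eta,x}_1-X^x_\eta\bigr|.
\end{equation*}

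Next, I would apply H\"older's inequality with conjugate exponents $p=2\ell$ and $q=2\ell/(2\ell-1)$. Lemma \ref{lem:Xgesm} controls the $p$-norm of the difference as
\begin{equation*}
\bigl(\E|\tl X^{\eta,x}_1-X^x_\eta|^{2\ell}\bigr)^{1/(2\ell)}
\;\le\;C\bigl(1+V^\ell(x)\bigr)^{1/(2\ell)}\eta^{3/2},
\end{equation*}
while the $q$-norm of the polynomial factor is bounded by $C(1+V^\ell(x))^{(2\ell-1)/(2\ell)}$ via the moment estimates $\E|X^x_\eta|^{2\ell},\,\E|\tl X^{\eta,x}_1|^{2\ell}\le C(1+V^\ell(x))$; the first follows from Lemma \ref{lem:AV2} combined with \eqref{e:BV}, and the second from the explicit identity $\tl X^{\eta,x}_1=x+g(x)\eta+\sqrt\eta\,\sigma\xi_1$ together with the linear growth $|g(x)|\le\tl C_{\rm op}(1+|x|)$. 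Multiplying the two H\"older factors gives exactly the target $\tl C_\ell\eta^{3/2}(1+V^\ell(x))$, and combining with the SDE Lyapunov estimate produces the stated $\check\gamma_\ell$ and $\check K_\ell$; one then restricts to $\eta$ small enough that $\tl C_\ell\eta^{3/2}<1-e^{-c_1\eta}$ so that $\check\gamma_\ell\in(0,1)$.

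The main technical obstacle is the bookkeeping of moments in the $V^\ell$-norm rather than in $|\cdot|^{2\ell}$: because $V(y)$ carries the nonlinear correction $\kappa[y-p\phi(\mathrm e'y)]'\tilde Q[y-p\phi(\mathrm e'y)]$, the conversion between polynomial-in-$|x|$ estimates (natural for the Gaussian EM increment) and the $V^\ell(x)$-type upper bounds required on the right-hand side must pass through the two-sided quadratic comparison \eqref{e:BV}, and one must verify that the resulting constants in front of $\eta^{3/2}$ are independent of $\eta$ (which they are, since the only $\eta$-dependence in the moment bound on $\tl X^{\eta,x}_1$ comes from terms of order $\eta^{1/2}$ or higher). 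Once this conversion is in hand, the remaining computations are routine Gaussian moment estimates.
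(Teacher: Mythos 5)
Your proposal matches the paper's proof essentially line for line: the same decomposition $\tilde{\mathcal P}_\eta V^\ell(x)=\E V^\ell(X^x_\eta)+\E[V^\ell(\tl X^{\eta,x}_1)-V^\ell(X^x_\eta)]$, the same invocation of Lemma \ref{lem:AV2} (via \eqref{e:Vm}/\eqref{e:V4}) for the first term, the same mean-value/H\"older estimate with conjugate exponents $2\ell$ and $2\ell/(2\ell-1)$ using Lemma \ref{lem:Xgesm} for the second. The only cosmetic difference is that you carry both endpoint moments $|\tl X^{\eta,x}_1|^{2\ell-1}$ and $|X^x_\eta|^{2\ell-1}$ explicitly, while the paper absorbs the convex-combination point into $(1+|X^x_\eta|^{2\ell-1})|\delta|+|\delta|^{2\ell}$; this changes nothing substantive.
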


\begin{proof}[Proof of Proposition \ref{p:GeneralErgodicEM}]
The process $(\tl{X}_k^{\eta})_{k\in \mathbb{N}_0}$ is strong Feller and irreducible from Lemma \ref{lem:GePe}. Then $(\tl{X}_k^{\eta})_{k\in \mathbb{N}_0}$ has at most one invariant measure from \cite[Theorem 1.4]{PZ1}. Combining Feller property in Lemma \ref{lem:GePe} and Lyapunov condition in Lemma \ref{lem:PXeD}, we know $(\tl{X}_k^{\eta})_{k\in \mathbb{N}_0}$ is ergodic with unique invariant measure $\tl \mu_{\eta}$ from \cite[Theorem 4.5]{MT2}. In addition, we prove the exponential ergodicity from \cite{DFMS1, TT1}.

For any $n\in \mathbb{N}_0$ and $x\in \R^d$, let
\begin{eqnarray*}
V_{n}(x) \ = \ e^{\frac{c_1}{8} n\eta}(1+V(x)), \quad
r(n) \  = \  \frac{c_1}{8}\eta e^{\frac{c_1}{8}n\eta}, \quad
\Psi(x) \  =  \ 1+V(x),
\end{eqnarray*}
and
\begin{eqnarray*}
\mathscr{C}\  =  \ \{x: V(x) \  \leq  \ \frac{8 e^{\frac{c_1}{8}\eta}}{c_1 \eta}(1+\check{K}_{1}-\check{\gamma}_{1})-1 \},
\quad
b\  =  \ \frac{8e^{\frac{c_1}{8}\eta}} {c_1\eta}(1+\check{K}_{1}-\check{\gamma}_{1}),
\end{eqnarray*}
where $\check{\gamma}_{1}=e^{-c_1 \eta} + \tl{C}_{1} \eta^{\frac{3}{2}}$, $\check{K}_{1}= \frac{\breve{c}_{1}}{c_1}(1-e^{-c_1\eta}) + \tl{C}_{1} \eta^{\frac{3}{2}}$ with $c_1$ and $\breve{c}_{1}$ in \eqref{e:AV}, $V$ is  in \eqref{e:Lypfun} and the set $\mathscr{C}$ is compact from \eqref{e:BV}. It follows from Lemma \ref{lem:PXeD} that
\begin{eqnarray*}
&& \tilde{\mathcal{P}}_{\eta} V_{n+1}(x) + r(n)\Psi(x) \\
&\leq&  \check{\gamma}_{1}e^{\frac{c_1}{8} \eta} V_n(x) +e^{\frac{c_1}{8}(n+1)\eta} (1+\check{K}_{1}-\check{\gamma}_{1})
+\frac{c_1}{8}\eta V_n(x) \\
&=& V_n(x)+\left( \check{\gamma}_{1}e^{\frac{c_1}{8}\eta} -1+\frac{c_1}{8}\eta \right) e^{\frac{c_1}{8}n\eta}(V(x)+1) +e^{\frac{c_1}{8}(n+1)\eta} (1+\check{K}_{1}-\check{\gamma}_{1})  \\
&=& V_n(x)+ \frac{c_1}{8} \eta e^{\frac{c_1}{8}n\eta}  \left( \frac{ \check{\gamma}_{1}e^{\frac{c_1}{8}\eta}-1 +\frac{c_1}{8}\eta}{ \frac{c_1}{8} \eta} (V(x)+1) + \frac{e^{\frac{c_1}{8}\eta}}{\frac{c_1}{8}\eta} (1+\check{K}_{1}-\check{\gamma}_{1}) \right) \\
&\leq& V_n(x) + b r(n)1_{\mathscr{C}}(x),
\end{eqnarray*}
where the last inequality holds from that
$\check{\gamma}_{1}e^{\frac{c_1}{8}\eta} -1+\frac{c_1}{8}\eta\leq -\frac{1}{8} c_1 \eta$ for small enough $\eta>0$.

We claim that the compact set $\mathscr{C}$ is petite.  It follows from \cite[Theorem 2.1]{TT1} or \cite[Theorem 1.1]{DFMS1} that
\begin{eqnarray*}
\lim_{n\to \infty} r(n) \|\tilde{\mathcal{P}}_{\eta}^n(x,\cdot) -\tl{\mu}_{\eta} \|_{\Psi} \ = \ 0,
\end{eqnarray*}
where $\|\tilde{\mathcal{P}}_{\eta}^n(x,\cdot) -\tl{\mu}_{\eta} \|_{\Psi}= \sup_{|h|\leq \Psi} | \tilde{\mathcal{P}}_{\eta}^n h(x)-\tl{\mu}_{\eta}(h)|$,
combining \eqref{e:BV} with $h(x)\leq C(1+V(x))= C \Psi(x)$ for all $x\in \R^d$ and $h\in {\rm Lip}_0(1)$ and some constant $C\geq 1$, one has
\begin{eqnarray*}
d_W( (\tl{\mathcal{P}}_{\eta}^k)^* \nu, \tl{\mu}_{\eta})
\ &\leq& \ C\eta^{-1} e^{-c k\eta}, \\
\| (\tl{\mathcal{P}}_{\eta}^k)^* \nu- \tl{\mu}_{\eta} \|_{\rm TV}
\ &\leq& \ C\eta^{-1} e^{-c k\eta}.
\end{eqnarray*}
It follows from Lemma \ref{lem:PXeD} that
\begin{eqnarray*}
\int_{\R^d} \tilde{\mathcal{P}}_{\eta} V^{\ell}(x) \tl{\mu}_{\eta}(\dif x)
\ &\leq& \ \int_{\R^d} \check{\gamma}_{\ell} V^{\ell}(x) \tl{\mu}_{\eta}(\dif x) +\check{K}_{\ell},
\end{eqnarray*}
such that
\begin{eqnarray*}
\tl{\mu}_{\eta}(V^{\ell})
\ &\leq& \  \check{\gamma}_{\ell} \tl{\mu}_{\eta}(V^{\ell}) +\check{K}_{\ell},
\end{eqnarray*}
that is,
\begin{eqnarray*}
\tl{\mu}_{\eta}(V^{\ell})
\ &\leq& \ \frac{\check{K}_{\ell}}{1-\check{\gamma}_{\ell}}
\  =  \ \frac{\frac{\breve{c}_{\ell}}{c_1}(1-e^{-c_1\eta}) + \tl{C}_{\ell}  \eta^{\frac{3}{2}}}{1-e^{-c_1 \eta}- \tl{C}_{\ell}\eta^{\frac{3}{2}}}
\  \leq  \  \frac{2\frac{\breve{c}_{\ell}}{c_1} c_1 \eta }{\frac{1}{2}c_1 \eta}
\  = \  \frac{4\breve{c}_{\ell}}{c_1},
\end{eqnarray*}
where the last inequality holds from Taylor expansion for $e^{-c_1 \eta}$ with small $\eta>0$, which implies the desired inequality from the relationship between $V$ and $|\cdot|^2$ in \eqref{e:BV}.

To show the compact set $\mathscr{C}$ is petite.  It suffices to show that
\begin{eqnarray}\label{e:petite}
p(\eta,x,z) \ \geq \ c \nu(z) \quad \forall x\in \mathscr{C},
\end{eqnarray}
where $p(\eta,x,z)$ is the density of $\tl{X}_1^{\eta,x}$, $c$ is some positive constant and $\nu$ is a probability measure from \cite[p. 778]{TT1}. Since
\begin{eqnarray}\label{e:pe}
&& p(\eta,x,z)  \\
&=& ((2\pi)^d \eta^d {\rm det}(\sigma\sigma^{\prime}) )^{-\frac{1}{2}} \exp\left(-(z-x-\eta g(x))^{\prime} \frac{(\sigma\sigma^{\prime})^{-1}} {2\eta}(z-x-\eta g(x)) \right)  \nonumber  \\
&\geq& ((2\pi)^d \eta^d \lambda_M^d)^{-\frac{1}{2}} \exp \left(-\frac{\lambda_m^{-1}} {2\eta}(2|z|^2 + 4|x|^2 + 8\tl{C}_{\rm op}^2 \eta^2(1+|x|^2)) \right)  \nonumber \\
&=& \left( (2\pi)^d \eta^d (\frac{1}{2}\lambda_m)^d \right)^{-\frac{1}{2}} \exp\left(-\frac{|z|^2} {\lambda_m\eta}\right)
\left(\frac{2\lambda_M}{\lambda_m} \right)^{-\frac{d}{2}} \exp\left(-\frac{\lambda_m^{-1}} {2\eta}(4|x|^2 + 8\tl{C}_{\rm op}^2 \eta^2(1+|x|^2)) \right), \nonumber
\end{eqnarray}
where $\lambda_M$ and $\lambda_m$ are maximum and minimum eigenvalues of matrix $\sigma \sigma^{\prime}$ respectively and
$$
|z-x-\eta g(x)|^2 \ \leq \  2|z|^2 + 4|x|^2 + 8\tl{C}_{\rm op}^2 \eta^2(1+|x|^2) \quad  \forall x, z \in \R^d.
$$
Thus, \eqref{e:petite} holds by taking
\begin{eqnarray*}
\nu(z) &=& \left((2\pi)^d \eta^d (\frac{1}{2}\lambda_m)^d \right)^{-\frac{1}{2}} \exp\left(-\frac{|z|^2} {\lambda_m\eta}\right),
\end{eqnarray*}
and
\begin{eqnarray*}
c&=& \inf_{ c\in \mathscr{C}}  \left\{ \left( \frac{2\lambda_M}{\lambda_m} \right)^{-\frac{d}{2}} \exp\left(-\frac{\lambda_m^{-1}} {2\eta}(4|x|^2 + 8\tl{C}_{\rm op}^2 \eta^2(1+|x|^2))\right) \right \}  >0
\end{eqnarray*}
for compact set $\mathscr{C}$. The proof is complete.
\end{proof}

\begin{proof}[Proof of Lemma \ref{lem:AV2}]
(i) Recall that $\mathcal{A} V(x)\leq -c_1V(x)+\breve{c}_1$ for all $x\in \R^d$  in \eqref{e:AV} and the function $V$ in \eqref{e:Lypfun}. 
%For any integers $\ell \geq 2$, one has
%\begin{eqnarray*}
%\nabla V^{\ell}(x) &=& \ell V^{\ell-1}(x) \nabla V(x), \\
%\nabla^2 V^{\ell}(x) &=& \ell(\ell-1) V^{\ell-2}(x) \nabla V(x)(\nabla V(x))^{\prime}+\ell V^{\ell-1}(x) \nabla^2 V(x),
%\end{eqnarray*}
Combining \eqref{e:BV}, \eqref{e:NV} and using the Young's inequality, then there exists some positive constant $\breve{c}_{\ell}$ such that for all $x\in \R^d$
\begin{eqnarray*}
\mathcal{A}V^{\ell}(x)
&=& \ell V^{\ell-1}(x) \mathcal{A}V(x) + \frac{\ell(\ell-1)}{2}V^{\ell-2}(x) \langle \nabla V(x)(\nabla V(x))^{\prime}, \sigma \sigma^{\prime} \rangle_{\rm HS} \\
&\leq & -c_1 \ell V^{\ell}(x)+\breve{c}_1 \ell V^{\ell-1}(x) + \frac{\ell(\ell-1)}{2}V^{\ell-2}(x) \langle \nabla V(x)(\nabla V(x))^{\prime}, \sigma \sigma^{\prime} \rangle_{\rm HS} \\
&\leq&  -c_1 V^{\ell}(x) + \breve{c}_{\ell}.
\end{eqnarray*}
By using It\^{o}'s formula, we know for all $t\geq 0$
\begin{eqnarray*}
\mathbb{E} V^{\ell}(X_t^{x})
&=& V^{\ell}(x)+\int_0^t\mathbb{E} \mathcal{A}V^{\ell}(X_s^{x})  \dif s
\ \leq \  V^{\ell}(x)+\int_0^t ( -c_1\mathbb{E} V^{\ell}(X_s^{x})+\breve{c}_{\ell}) \dif s,
\end{eqnarray*}
it implies that (\cite[proof of Lemma 7.2]{Gur1})
\begin{eqnarray*}
\mathbb{E} V^{\ell}(X_t^{x})
&\leq& e^{-c_1 t }V^{\ell}(x)+\frac{\breve{c}_{\ell}(1-e^{-c_1t})}{c_1}.
\quad \forall t\geq 0,
\end{eqnarray*}

Let $\chi: [0,\infty) \to [0,1]$ be a continuous function such that $\chi(r)=1$ for $0\leq r \leq 1$ and $\chi(r)=0$ for $r\geq 2$. Let $L>0$ be a large number. It follows from \eqref{e:Vm} that
\begin{eqnarray*}
\mathbb{E} \left[ V^{\ell}(X_t^{x}) \chi\left(\frac{|X_t^{x}|}{L}\right) \right]
&\leq& e^{-c_1 t }V^{\ell}(x)+ \frac{\breve{c}_{\ell} (1-e^{-c_1 t})}{c_1}.
\end{eqnarray*}

Let $t\to \infty$, we have
\begin{eqnarray*}
\int_{\R^d} V^{\ell}(x) \chi\left(\frac{|x|}{L}\right) \mu(\dif x)
&\leq&  \frac{\breve{c}_{\ell}}{c_1}.
\end{eqnarray*}

Let $L \to \infty$, we have
\begin{eqnarray*}
\int_{\R^d} V^{\ell}(x) \mu(\dif x) &\leq&  \frac{\breve{c}_{\ell}}{c_1}.
\end{eqnarray*}
Combining with \eqref{e:BV}, we can get the inequality $\mu(|\cdot|^{2\ell})\leq C$ and $C$ depends on $\ell$.

%(ii) The existence of an invariant measure will be given below by Krylov Bogolioubov theorem (\cite[Theorem 1.10]{MH1}). Let $X_0=x$ and define
%\begin{eqnarray*}
%\mu_T^{x}(A) &=& \frac{1}{T} \int_0^T P_{t}(x,A) \dif t, \quad \textrm{ for any measurable set } A \in \mathcal{B}(\R^d), \quad \forall T>0.
%\end{eqnarray*}
%
%Since $P_{t}$ is Feller from Proposition  \ref{pro:HI} (iii). It suffices to show that $\{ \mu_T^{x} \}_{T>0}$ is tight from Krylov Bogolioubov theorem. For $V$ in \eqref{e:Lypfun}, one has \eqref{e:Vm}. Then, there exists some positive constant $\tilde{c}$ such that
%\begin{eqnarray*}
%\mu_T^{x} (V)
%&=& \frac{1}{T}\int_0^T \mathbb{E} V(X_t^{x})\dif t
%\ \ \leq \ \ \tilde{c}.
%\end{eqnarray*}
%Since $V(x)\geq 0$ and $V(x)\to\infty$ as $|x|\to \infty$, for any fixed $\delta>0$, there exists a compact set $K_0$ such that $V(x)> \frac{\tilde{c}}{\delta}$ for $x\in K_0^c$, where $K_0^c$ is the complement set of $K_0$. Then we have
%\begin{eqnarray*}
%\frac{\tilde{c}}{\delta}\mu_T^{x}(K_0^c)
%\ \ < \ \ \mu_T^{x} (V)
%\ \ \leq \ \  \tilde{c},
%\end{eqnarray*}
%that is, $\mu_T^{x} (K_0^c)<\delta$, which implies $\mu_T^{x} (K_0)>1-\delta$. So $\{\mu_T^{x} \}_{T>0}$ is tight. Hence the limit of a convergent subsequence as $T\to \infty$ provides an invariant measure $\mu$ of $(P_{t})_{t\geq 0}$.

(ii) With similar calculations for \cite[proof of Theorem 3]{DG1} and combining \eqref{e:Vm}, then there exist positive constants $c$ and $C$ such that for any positive integers $\ell$ 
\begin{eqnarray*}
\| P^*_{t}\nu-\mu \|_{\rm{TV}, \rm{V}^{\ell} }
&\leq& C (1+\nu(V^{\ell}))e^{-c t}.
\end{eqnarray*}
Thus, the exponential ergodicity for $(X_t)_{t\geq 0}$ in Wasserstein-1 distance holds from \eqref{e:dWandTV} and \eqref{e:BV} by taking $\ell=1$. Furthermore, one has the following inequality $\| P_{t}^* \nu-\mu \|_{ \rm{TV}} \leq   \| P_{t}^* \nu-\mu \|_{ \rm{TV}, \rm{V} }$ under $V(x)\geq 0$ for all $x\in \R^d$. The proof is complete.
\end{proof}

\begin{proof}[Proof of Lemma \ref{lem:GePe}]
(i) Since $\tl{X}_{1}^{\eta,x}$ has the same law as $\mathcal{N}(x+g(x)\eta, \eta\sigma\sigma^{\prime})$ with the density function $p(\eta,x,z)$ in \eqref{e:pe}, one has
\begin{eqnarray*}
\tilde{\mathcal{P}}_{\eta}f(x)
&=& \E f(\tl{X}_{1}^{\eta,x})
= \int_{\R^d} f(z)p(\eta, x, z) \dif z,
\end{eqnarray*}
thus
\begin{eqnarray*}
\nabla \tilde{\mathcal{P}}_{\eta}f(x)
&=& \int_{\R^d} \eta^{-1} f(z)(I+\nabla g(x)\eta)(\sigma\sigma^{\prime})^{-1} (z-x-g(x)\eta) p(\eta,x,z) \dif z.
\end{eqnarray*}
It implies that
\begin{eqnarray*}
|\nabla \tilde{\mathcal{P}}_{\eta}f(x)|
&\leq& \int_{\R^d} \eta^{-1} |f(z)| \|I+\nabla g(x)\eta\|_{\rm op} \| \sigma^{-1} \|^2_{\rm op}|z-x-g(x)\eta| p(\eta,x,z) \dif z \\
&\leq& \| f \|_{\infty} (1+C_{\rm op}\eta) \| \sigma^{-1} \|^2_{\rm op} \| \sigma \|_{\rm op}\eta^{-\frac{1}{2}}d^{\frac{1}{2}},
\end{eqnarray*}
where the second inequality holds from that $\tl{X}_{1}^{\eta,x}-x-g(x)\eta$ has the same law as $\mathcal{N}(0,\eta\sigma\sigma^{\prime})$.

(ii) For any $x,y\in \R^d$ and $r>0$, it follows from \eqref{e:reXD} that
\begin{eqnarray*}
\tl{\mathcal{P}}_{\eta}(x,B(y,r))
&=& \PP(\tl{X}_{1}^{\eta,x} \in B(y,r) ) 
\ = \ \PP(x+g(x)\eta+\sigma B_{\eta} \in B(y,r)) \\
&=&\PP(\sigma B_{\eta} \in B(y-x- g(x)\eta,r))>0,
\end{eqnarray*}
where $B_{\eta}$ has the same law as $ \mathcal{N}(0,\eta I)$. Assuming that $\tilde{\mathcal{P}}_{\eta}^k(x,B(y,r))>0$ for any $x,y\in\R^d$, $r>0$ and some integer $k$, one has
\begin{eqnarray*}
\tilde{\mathcal{P}}_{\eta}^{k+1}(x,B(y,r))
=\int_{\R^d} \tilde{\mathcal{P}}_{\eta}^{k}(x,z) \tilde{\mathcal{P}}_{\eta}(z,B(y,r)) \dif z
>0,
\end{eqnarray*}
thus, the irreducibility holds by induction.
\end{proof}

\begin{proof}[Proof of Lemma \ref{lem:Xgesm}]

(i) Combining with Lemma \ref{lem:AV2} and with similar calculations for \eqref{e:Vm}, one has
\begin{eqnarray}\label{e:V4}
\E V^{\ell}(X^{x}_{\eta})
&\leq& e^{-c_1 \eta} V^{\ell}(x) + \frac{\breve{c}_{\ell}}{c_1}(1-e^{-c_1 \eta}).
\end{eqnarray}

From $(X_t)_{t \geq 0}$ in SDE \eqref{hSDEg}, one has for any $0<s<1$,
\begin{eqnarray*}
X^{x}_s &=& x + \int_0^s  g(X^{x}_u) \dif u +\sigma B_s,
\end{eqnarray*}
by using H\"{o}lder inequality and $|g(x)| \leq \tl{C}_{{\rm op}}(1+|x|)$ for all $x\in \R^d$ with $\tl{C}_{{\rm op}}$ in \eqref{tlCop}, there exists some positive constant $\tl{C}_{\ell}$ depending on $\ell$ such that
\begin{eqnarray*}
|X^{x}_s-x|^{2\ell}
&=& \left| \int_0^s g(X^{x}_u)\dif u+\sigma B_s \right|^{2\ell} 
%&\leq& \tl{C}_{\ell}|\int_0^s g(X^{x}_u) \dif u|^{2\ell}+\tl{C}_{\ell}|B_s|^{2\ell} \\
%&\leq& \tl{C}_{\ell}|\int_0^s(1+|X^{x}_u|) \dif u |^{2\ell}+\tl{C}_{\ell}|B_s|^{2\ell} \\
%&\leq& \tl{C}_{\ell} s^{2\ell-1} \int_0^s  (1+|X^{x}_u|)^{2\ell} \dif u+ \tl{C}_{\ell} | B_s |^{2\ell} \\
\  \leq \   \tl{C}_{\ell} s^{2\ell-1} \int_0^s  (1+V^{\ell}(X^{x}_u) ) \dif u + \tl{C}_{\ell}| B_s |^{2\ell},
\end{eqnarray*}
where the last inequality holds from \eqref{e:BV}. Combining with \eqref{e:V4}, we know there exists some positive constant $\tl{C}_{\ell}$ depending on $\ell$ such that
\begin{eqnarray}\label{e:Exe}
\E |X^{x}_s -x |^{2\ell}
&\leq& \tl{C}_{\ell} s^{2\ell-1} \int_0^s  (1+\E V^{\ell}(X^{x}_u) ) \dif u + \tl{C}_{\ell} \E | B_s |^{2\ell} 
% &\leq& \tl{C}_{\ell} s^{2\ell-1}\int_0^s (1+ e^{-c_1 u}V^{\ell}(x) +  \frac{\breve{c}_{\ell}}{c_1}  ) \dif u + \tl{C}_{\ell} s^{\ell} \nonumber \\
\ \leq \  \tl{C}_{\ell} s^{\ell}(1+V^{\ell}(x)),
\end{eqnarray}
where the second inequality holds from \eqref{e:V4} and the last inequality holds for $0<s<1$.

(ii) From  $(X_t)_{t\geq 0}$ and $(\tl{X}^{\eta}_{k})_{k\in \mathbb{N}_0}$ in \eqref{hSDEg} and \eqref{e:reXD} respectively, one has
\begin{eqnarray*}
X^{x}_{\eta}  - \tl{X}^{\eta,x}_{1}
&=& \int_0^{\eta} ( g(X^{x}_s) - g(x) ) \dif s,
\end{eqnarray*}
by using H\"{o}lder inequality and $\| \nabla g(x) \|_{{\rm op}}\leq C_{{\rm op}}$ for all $x\in \R^d$ with $C_{{\rm op}}$ in \eqref{Cop}, there exists some positive constant $\tl{C}_{\ell}$ depending on $\ell$ not on $\eta$ such that
\begin{eqnarray*}
|X^{x}_{\eta}  - \tl{X}^{\eta,x}_{1} |^{2\ell}
&\leq& \left| \int_0^{\eta} ( g(X^{x}_s) - g(x) ) \dif s \right|^{2\ell}
\ \leq \ \tl{C}_{\ell} \eta^{2\ell-1} \int_0^{\eta} |X^{x}_s - x|^{2\ell} \dif s .
\end{eqnarray*}
Combining this with \eqref{e:Exe}, there exists some positive constant $\tl{C}_{\ell}$ depending on $\ell$ not on $\eta$ such that
\begin{eqnarray*} %\label{e:X-X8}
\E |X^{x}_{\eta} - \tl{X}^{\eta,x}_{1} |^{2\ell}
&\leq& \tl{C}_{\ell} \eta^{2\ell-1} \int_0^{\eta} s^{\ell}(1+V^{\ell}(x)) \dif s
\ \leq \ \tl{C}_{\ell} (1+V^{\ell}(x))\eta^{3\ell}.
\end{eqnarray*}
The proof is complete.
\end{proof}

\begin{proof}[Proof of Lemma \ref{lem:PXeD}]

We use the method from \cite[Theorem 7.2]{MSH1} to get the desired inequality. For $V$ in \eqref{e:Lypfun}, one has
\begin{eqnarray*}
\E V^{\ell}(\tl{X}^{\eta,x}_{1})
&\leq& \E V^{\ell}(X^{x}_{\eta})+\E| V^{\ell}(\tl{X}^{\eta,x}_{1})-V^{\ell}(X^{x}_{\eta})|.
\end{eqnarray*}
Let $\tl{C}_{\ell}$ be some constants depending on $\ell$ but not on $\eta$, whose values may vary from line to line.

We claim that for small $\eta \in (0,e^{-1})$,
\begin{eqnarray}\label{e:V-V4}
\E |V^{\ell}(\tl{X}^{\eta,x}_{1}) -V^{\ell}(X^{x}_{\eta})|
&\leq& \tl{C}_{\ell} (1+V^{\ell}(x))\eta^{\frac{3}{2}}.
\end{eqnarray}
Combining this with \eqref{e:V4}, one has
\begin{eqnarray*}
\E V^{\ell}(\tl{X}^{\eta,x}_{1})
&\leq& \E V^{\ell}(X^{x}_{\eta}) + \E | V^{\ell}(\tl{X}^{\eta,x}_{1}) -V^{\ell}(X^{x}_{\eta})| \\
&\leq& e^{-c_1 \eta} V^{\ell}(x) +  \frac{\breve{c}_{\ell}}{c_1}(1-e^{-c_1\eta}) + \tl{C}_{\ell} \eta^{\frac{3}{2}} (1+V^{\ell}(x))  \\
&\leq& (e^{-c_1 \eta} + \tl{C}_{\ell} \eta^{\frac{3}{2}} ) V^{\ell}(x) + \frac{\breve{c}_{\ell}}{c_1}(1-e^{-c_1\eta}) + \tl{C}_{\ell} \eta^{\frac{3}{2}},
\end{eqnarray*}
which implies
\begin{eqnarray*}
\tilde{\mathcal{P}}_{\eta} V^{\ell}(x)
&\leq& \check{\gamma}_{\ell} V^{\ell}(x)+\check{K}_{\ell},
\end{eqnarray*}
with $\check{\gamma}_{\ell}=e^{-c_1 \eta} + \tl{C}_{\ell} \eta^{\frac{3}{2}}$ and $\check{K}_{\ell}= \frac{\breve{c}_{\ell}}{c_1}(1-e^{-c_1\eta}) + \tl{C}_{\ell} \eta^{\frac{3}{2}}$ for small enough $\eta\in (0,e^{-1})$.

It remains to show the claim \eqref{e:V-V4} holds.
Since
\begin{eqnarray*}
V^{\ell}(\tl{X}^{\eta,x}_{1})-V^{\ell}(X^{x}_{\eta}) &=& \ell \int_0^1 (\tl{X}^{\eta,x}_{1}-X^{x}_{\eta})^{\prime} \nabla V( X^{x}_{\eta} + r(\tl{X}^{\eta,x}_{1}-X^{x}_{\eta})  ) V^{\ell-1}( X^{x}_{\eta} + r(\tl{X}^{\eta,x}_{1}-X^{x}_{\eta})  )   \dif r,
\end{eqnarray*}
then by \eqref{e:BV}
\begin{eqnarray*}
|V^{\ell}(\tl{X}^{\eta,x}_{1})-V^{\ell}(X^{x}_{\eta})|
&\leq& \tl{C}_{\ell} \int_0^1 [1+|X^{x}_{\eta} + r(\tl{X}^{\eta,x}_{1}-X^{x}_{\eta})|]^{2\ell-1}  |\tl{X}^{\eta,x}_{1}-X^{x}_{\eta}| \dif r  \\
% &\leq& \tl{C}_{\ell}(1+|X^{x}_{\eta}|^{2\ell-1} + |\tl{X}^{\eta,x}_{1}-X^{x}_{\eta}|^{2\ell-1} ) |\tl{X}^{\eta,x}_{1}-X^{x}_{\eta}| \\
& \leq & \tl{C}_{\ell} (1+|X^{x}_{\eta}|^{2\ell-1})| \tl{X}^{\eta,x}_{1}-X^{x}_{\eta}| +\tl{C}_{\ell} |\tl{X}^{\eta,x}_{1}-X^{x}_{\eta}|^{2\ell}.
\end{eqnarray*}
By H\"{o}lder inequality and \eqref{e:BV}, one has
\begin{eqnarray*} 
\E |V^{\ell}(\tl{X}^{\eta,x}_{1}) -V^{\ell}(X^{x}_{\eta})|  
&\leq& \tl{C}_{\ell}(\E [1+|X^{x}_{\eta}|^{2\ell-1}] ^{\frac{2\ell}{2\ell-1}})^{\frac{2\ell-1}{2\ell}} [\E |\tl{X}^{\eta,x}_{1}-X^{x}_{\eta}|^{2\ell}] ^{\frac{1}{2\ell}}+ \tl{C}_{\ell} \E |\tl{X}^{\eta,x}_{1}-X^{x}_{\eta}| ^{2\ell} \\
& \leq& \tl{C}_{\ell}(\E [1+V^{\ell}(X^{x}_{\eta})]) ^{\frac{2\ell-1}{2\ell}} [\E |\tl{X}^{\eta,x}_{1}-X^{x}_{\eta}|^{2\ell}] ^{\frac{1}{2\ell}}
+ \tl{C}_{\ell} \E |\tl{X}^{\eta,x}_{1}-X^{x}_{\eta}|^{2\ell}.
\end{eqnarray*}
Combining this with \eqref{e:V4} and Lemma \ref{lem:Xgesm},  for small $\eta \in (0,e^{-1})$ we have,
\begin{eqnarray*}
\E |V^{\ell}(\tl{X}^{\eta,x}_{1})-V^{\ell}(X^{x}_{\eta})|
&\leq& \tl{C}_{\ell} \left(1+ e^{-c_1 \eta}V^{\ell}(x) +  \frac{\breve{c}_{\ell}}{c_1} \right)^{\frac{2\ell-1}{2\ell}} \eta^{\frac{3}{2}}(1+V^{\ell}(x))^{\frac{1}{2\ell}} + \tl{C}_{\ell} (1+V^{\ell}(x))\eta^{3\ell} \\
&\leq&\tl{C}_{\ell} (1+V^{\ell}(x)) \eta^{\frac{3}{2}}.
\end{eqnarray*}
The proof is complete.
\end{proof}

\section{Proof of auxiliary Lemmas in Section \ref{Malliavin-Stein} } \label{sec:AAS}

\begin{proof}[Proof of Lemma \ref{lem:XXem}]
(i) By using It\^o's formula, for any integers $m \geq 2$, we have
\begin{eqnarray*}
\E |X^{x}_{t}|^{m}&=&|x|^{m}+m \mathbb{E} \int_{0}^{t} |X^{x}_{s}|^{m-2} (X^{x}_{s})' g(X^{x}_{s}) \dif s \\
&&+\frac m2 \mathbb{E} \int_{0}^{t} |X^{x}_{s}|^{m-4}\left[(m-2)|\sigma' X^{x}_{s}|^{2}+ { \rm tr} (\sigma \sigma') |X^{x}_{s}|^{2} \right] \dif s.
\end{eqnarray*}
By the bound $|g(x)| \leq \tl{C}_{\rm op}(1+|x|)$ for all $x\in \R^d$ with $\tl{C}_{{\rm op}}$ in \eqref{tlCop}, we further get
\begin{eqnarray*}
\E |X^{x}_{t}|^{m}& \leq &|x|^{m}+\tilde{C}_{\rm op} \left(  m\int_{0}^{t} \E |X^{x}_{s}|^{m} \dif s+  m\int_{0}^{t} \E |X^{x}_{s}|^{m-1} \dif s+ m^2\int_{0}^{t} \E |X^{x}_{s}|^{m-2} \dif s\right)  \nonumber \\
%& \le & |x|^{m}+m^2 \tilde{C}_{\rm op} (  2\int_{0}^{t} \E^x |X^{\e,x}_{s}|^{m} \dif s+2t) \\
&\le & |x|^{m}+2m^2 \tilde{C}_{\rm op} \left(  \int_{0}^{t} \E |X^{x}_{s}|^{m} \dif s+t \right),
\end{eqnarray*}
where the second inequality is by Young's inequalities, that is, $b^{m-1} \le \frac{m-1}{m} b^{m}+\frac 1m$ and $b^{m-2} \le \frac{m-2}{m} b^{m}+\frac 2m$. Thus, we have
\begin{eqnarray*}
\E |X^{x}_t|^{m}  \ \le \  e^{C_m t}(|x|^{m}+1),
\end{eqnarray*}
where $C_m=2m^2 \tl{C}_{\rm op}$. The moment estimates $\E |X^{\e,x}_t|^{m}$ can be obtainded similarly. Thus, inequality \eqref{e:XXem} holds.

(ii) Consider $X^{\e,x}_{t}-X_{t}^{x}$, which satisfies the following equation
\begin{eqnarray*}
\frac{\dif}{\dif t} \left(X^{\e,x}_{t}-X_{t}^{x}\right)&=&g_\e (X^{\e,x}_t)-g(X^{x}_t) \\
&=&g_\e (X^{\e,x}_t)-g_{\e}(X^{x}_t)+g_{\e}(X^{x}_t)-g(X^{x}_{t}), \\
&=&\nabla g_{\e}(\theta_{t})  \left(X^{\e,x}_{t}-X_{t}^{x}\right)+g_{\e}(X^{x}_t)-g(X^{x}_{t}),
\end{eqnarray*}
where $\theta_{t}$ is between $X^{x}_t$ and $X^{x,\e}_t$. The above equation can be solved by
\begin{eqnarray*}
X^{\e,x}_{t}-X_{t}^{x}&=&\int_{0}^{t} \exp\left(\int_{s}^{t} \nabla g_{\e}(\theta_{r}) \dif r \right) (g_{\e}(X^{x}_s)-g(X^{x}_{s})) \dif s.
\end{eqnarray*}
Since $\|\nabla g_{\e}(x)\|_{ {\rm op} } \le C_{{\rm op}}$ for all $x \in \R^d$, the  relation $|g_{\e}(x)-g(x)| \le C_{{\rm op}}\e$ for all $x \in \R^d$ immediately gives us \eqref{e:XeCon-1}.
\end{proof}

\begin{proof}[Proof of Lemma \ref{l:XeCon}]
Denote the event
$$N=\left\{\int_{0}^{\infty} \|\nabla g_{\e}(X^{\e,x}_{s})\|_{ {\rm op} }1_{\{{\rm e}^{\prime}  X^{x}_{s}=0\}} \dif s\ne 0\right\},$$
and we claim that
\Be  \label{e:PN=0}
\PP(N) \ = \ 0.
\Ee
Indeed, for any $T>0$, by $\|\nabla g_{\e}(x)\|_{{\rm op}} \le C_{{\rm op}}$ for all $x \in \R^d$ and $\e$,  we have
\begin{eqnarray}\label{e:ET}
\E \int_{0}^{T} \|\nabla g_{\e}(X^{\e,x}_{s})\|_{ {\rm op} }1_{\{{\rm e}^{\prime}  X_{s}^x=0\}} \dif s
&=& \int_{0}^{T} \E[\|\nabla g_{\e}(X^{\e,x}_{s})\|_{{\rm op}}1_{\{{\rm e}^{\prime}  X_{s}^x=0\}}] \dif s  \nonumber \\
& \leq &   \int_{0}^{T}  C_{{\rm op}}  \E 1_{\{{\rm e}^{\prime}  X_{s}^x=0\}} \dif s  \nonumber \\
&=& 0,
\end{eqnarray}
where the last equality is by Proposition \ref{lem:occupation} and the fact that for any small $\e>0$,
\begin{eqnarray*}
 \E\int_0^T  1_{\{{\rm e}^{\prime}  X_{s}^x=0\}} \dif s 
& \leq &  \E L_T^{\e,x} 
 \ \leq \ C\e e^{\frac{C_2}{2} T} (1+|x|)(1+T),
\end{eqnarray*}
while $L_t^{\e,x} = \int_0^t [-\frac{1}{\e^2} ({\rm e}^{\prime} X_s^x)^2 +1 ] 1_{ \{ |{\rm e}^{\prime} X_s^x| \leq \e \} } \dif s$. The above inequality holds for any $\e>0$, we know $  \E\int_0^T  1_{\{{\rm e}^{\prime}  X_{s}^x=0\}} \dif s = 0$.

Since \eqref{e:ET} holds for all $T>0$, we see that
\begin{eqnarray*}
\E \int_{0}^{\infty} \|\nabla g_{\e}(X^{\e,x}_{s})\|_{ {\rm op} }1_{\{{\rm e}^{\prime}  X^{x}_{s}=0\}} \dif s
& = & 0,
\end{eqnarray*}
hence \eqref{e:PN=0} holds.

Recall the definition of $J^{\e,x}_{s,t}$ and define
$$\hat J^{\e,x}_{s,t}:=\exp \left(\int_s^t \nabla g_{\e} (X_{r}^{\e,x}) 1_{\{{\rm e}^{\prime}  X^{x}_{r} \ne 0\}}\dif r \right).$$
It is easy to verify that
\Be  \label{e:JstConN}
\lim_{\e \rightarrow 0} \hat J^{\e,x}_{s,t}=J^{x}_{s,t}, \ \ \ \ \ \ 0 \le s \leq t<\infty.
\Ee
For any $\omega \notin N$, we know $\int_{0}^{\infty} \nabla g_{\e}(X^{\e,x}_{s})1_{\{{\rm e}^{\prime}  X^{x}_{s}=0\}} \dif s=0$ and thus
$$\exp\left(\int_{s}^{t} \nabla g_{\e}(X^{\e,x}_{r})1_{\{{\rm e}^{\prime}  X^{x}_{r}=0\}} \dif r \right)=I, \ \ \ \ 0 \le s \leq t<\infty.$$
Since $I$ commutes with any matrix, for all $\omega \notin N$, we get
$$\hat J^{\e,x}_{s,t}=\hat J^{\e,x}_{s,t} \exp \left(\int_{s}^{t} \nabla g_{\e}(X^{\e,x}_{r})1_{\{{\rm e}^{\prime}  X^{x}_{r}=0\}} \dif r \right)=J^{\e,x}_{s,t}, \ \ \ \  \ 0 \le s \leq t<\infty.$$
This, combining with \eqref{e:JstConN}, implies that for all $\omega \notin N$,
$$\lim_{\e \rightarrow 0}  J^{\e,x}_{s,t}=J^{x}_{s,t}, \ \ \ \ \ \ 0 \le s \leq t<\infty.$$
Note that $ J^{\e,x}_{s,t}$ and $J^{x}_{s,t}$ are matrices, the above pointwise convergence implies the convergence in operator.
\end{proof}

\begin{proof}[Proof of Lemma \ref{lem:EIm}]
By using Burkholder-Davis-Gundy inequality, we have
\begin{eqnarray*}
\E \left|\mathcal{I}_{u}^{\e,x}(t)\right|^m
&\leq &\frac{C}{t^m}\E \left(\int_0^t  |\sigma^{-1}J^{\e,x}_{r} u|^2 \dif r \right)^{m/2}
\le  \frac{ C }{t^m} \E \left(\int_0^t \|\sigma^{-1}\|^{2}_{ {\rm op} } \|J^{\e,x}_{r}\|^{2}_{ {\rm op} }  |u|^{2}\dif r \right)^{m/2},
\end{eqnarray*}
which, together with \eqref{e:JJe}, immediately gives \eqref{e:IuexEst}.

For the second relation, by Burkholder-Davis-Gundy inequality, we have
\begin{eqnarray*}
\E |\mathcal{I}_{u}^{\e,x}(t)- \mathcal{I}_{u}^{x}(t)|^m
&=& \E \left|\frac1t\int_0^t \langle \sigma^{-1}(J^{\e,x}_{r}-J^{x}_{r}) u, \dif B_r\rangle \right|^m \\
&\leq &\frac{ C }{t^m} \E \left(\int_0^t  |\sigma^{-1}(J^{\e,x}_{r}-J^{x}_{r}) u|^2 \dif r \right)^{m/2} \ \rightarrow \ 0  { \ \ \rm as \ \ } \e \rightarrow 0,
\end{eqnarray*}
where the limit is by dominated convergence theorem (with a notice of Lemma \ref{l:XeCon}).
\end{proof}

\begin{proof}[Proof of Lemma \ref{hLef2}]
If $\psi\in \mathcal{C}^1(\mathbb{R}^d,\mathbb{R})$, then by \eqref{e:DVNu} and the Bismut formula \eqref{e:BisFor}, we have
\begin{eqnarray*}
\nabla_{u} \mathbb{E}[\psi(X^{\e,x}_t)] &=& \mathbb{E}[\nabla \psi(X^{\e,x}_t) \nabla_{u} X^{\e,x}_t  ]  
= \mathbb{E}[\nabla \psi(X^{\e,x}_t) D_{\mathbb{V}}X^{\e,x}_t]  \\
&=&  \mathbb{E}[D_{\mathbb{V}} \psi(X^{\e,x}_t)] 
= \mathbb{E} [\psi(X^{\e,x}_t) \mathcal{I}_{u}^{\e,x}(t)],
\end{eqnarray*}
where $\mathbb{V}$ is the direction of Malliavin derivative.

Because the operator $\nabla$ is closed, (\cite[Theorem 2.2.6]{PJR1}) and by the well known property of closed operators \cite[Proposition 2.1.4]{PJR1}, as long as it is shown that
\begin{eqnarray}  \label{e:EPhi}
\lim_{\e \rightarrow 0} \mathbb{E}[\psi(X^{\e,x}_t)] \ = \ \mathbb{E}[\psi(X^{x}_t)],
\  \ \lim_{\e \rightarrow 0} \mathbb{E} [\psi(X^{\e,x}_t) \mathcal{I}_{u}^{\e,x}(t)]\ =\ \mathbb{E} [\psi(X^{x}_t) \mathcal{I}_{u}^{x}(t)],
\end{eqnarray}
then we know that $\nabla_u\E[\psi(X^x_t)]$ exists and has its value as $\mathbb{E} [\psi(X^{x}_t) \mathcal{I}_{u}^{x}(t)]$. Hence, the first relation is proved.

Before proving \eqref{e:EPhi}, let us show that for all $m \ge 1$,
\begin{eqnarray}  \label{e:PhiPowM}
\mathbb{E} |\psi(X^{\e,x}_t)|^{m},  \mathbb{E} |\psi(X^{x}_t)|^{m} & \le & \hat{C}_1 e^{C_m t} (|x|^{m}+1),
\end{eqnarray}
where $\hat{C}_1$ does not depend on $\e$, $t$ and $x$. (Without loss of generality, we can take $\hat{C}_1$ depending on $m$, $\| \nabla \psi \|_{\infty}$ and $\psi(0)$.) Indeed, it is easily seen that
 \begin{eqnarray*}
\mathbb{E} |\psi(X^{\e,x}_t)|^{m}
& \le & \hat{C}_2 \left(|\psi(0)|^{m}+ \| \nabla \psi \|^{m}_{\infty} \E |X^{\e,x}_t|^{m} \right),
 \end{eqnarray*}
where $\hat{C}_2$ only depends on $m$ and this, together with \eqref{e:XXem}, immediately yields the aimed inequality.

For the first limit of \eqref{e:EPhi}, for all $m \ge 1$, by \eqref{e:XeCon-1}  we have
 \begin{eqnarray}
 \mathbb{E} |\psi(X^{\e,x}_t)-\psi(X^{x}_{t})|^{m} & \le & \| \nabla \psi \|^{m}_{\infty}  \E |X^{\e,x}_t-X^{x}_t |^{m}  \ \ \rightarrow \ 0 \textrm{ \ \ \rm as \ \ } \e \to 0.   \label{e:PhiXXe}
 \end{eqnarray}
 For the second one, we have
 \begin{eqnarray*}
&&  |\mathbb{E} [\psi(X^{\e,x}_t) \mathcal{I}_{u}^{\e,x}(t)]-\mathbb{E} [\psi(X^{x}_t) \mathcal{I}_{u}^{x}(t)]|  \\
 &\leq &  \mathbb{E} [|\psi(X^{\e,x}_t)| |\mathcal{I}_{u}^{\e,x}(t)-\mathcal{I}_{u}^{x}(t)|]+\mathbb{E} [|\psi(X^{\e,x}_t)-\psi(X^{x}_t)||\mathcal{I}_{u}^{x}(t)|] \\
 & \rightarrow & 0 { \ \ \rm as  \ \ } \e \rightarrow 0,
 \end{eqnarray*}
where the convergence is by the following argument: applying Cauchy-Schwarz inequality on the two expectations, and using \eqref{e:IuexEst}, \eqref{e:IueCon}, \eqref{e:PhiPowM} and \eqref{e:PhiXXe}.
\end{proof}

\begin{proof}[Proof of Lemma \ref{prop:ST}]
We firstly show that $\int_0^{\infty}[P_t h(x) - \mu(h)] \dif t$ is well defined. Denote $\hat{h} = -h + \mu(h)$. For  any $h \in \mathcal{B}_b(\R^d,\R)$, we know  $h(x) \leq \|h\|_{\infty}( 1+ V(x))$ for all $x\in \R^d$ and from Lemma \ref{lem:AV2} , one has
	\begin{eqnarray*}
	|P_t h(x) - \mu(h) | \ \leq \ \|h\|_{\infty} \| P^*_{t}\delta_x - \mu    \|_{ \rm{TV}, \rm{V} }
	\ \leq \ C \|h\|_{\infty}(1+ V(x)) e^{-c t},
	\end{eqnarray*}
which implies that
	\begin{eqnarray*}
\left| \int_0^{\infty}[P_t h(x)-\mu(h)] \dif t \right|
\ \leq \
C \|h\|_{\infty}(1+V(x)) 
\ < \ \infty.
	\end{eqnarray*}
The reminder is similar to that of \cite[Proposition 6.1]{FSX1}. The proof is complete.
%Recall  the  resolvent operator associate with $\mathcal{A}$   for $\lambda>0$ (see, \cite[pp. 158-159]{APP1}),
%$$
%R_{\lambda} f = \int_0^{\infty} e^{-\lambda t} P_t f \dif t \text{ \ \  for \ \ } \lambda>0, f\in \mathcal{C}_b(\R^d,\R).
%$$
%	Then, one has
%	\begin{eqnarray}\label{e:res}
%	(\lambda  -\mathcal{A}  ) R_{\lambda} \hat{h} = \hat{h}, \quad \forall \lambda>0.
%	\end{eqnarray}
%	
%	
%	Then, for any $\lambda>0$, one has
%	\begin{eqnarray*}
%\lambda \int_0^{\infty}  e^{-\lambda t} P_t \hat{h} \dif t - \hat{h} = \mathcal{A} (  \int_0^{\infty}  e^{-\lambda t} P_t \hat{h} \dif t ).
%	\end{eqnarray*}
%	
%	Let $\lambda \to 0+$, one has
%	\begin{eqnarray*}
%	\lambda \int_0^{\infty}  e^{-\lambda t} P_t \hat{h} \dif t - \hat{h}  \to -\hat{h}
%		\text{ \ \ and \ \ }
%	\int_0^{\infty}  e^{-\lambda t} P_t \hat{h}  \dif t \to \int_0^{\infty} P_t \hat{h} \dif t.
%	\end{eqnarray*}
%	
%	As $\mathcal{A}$ is a closed operator \cite[Theorem 2.2.6]{PJR1}, $\int_0^{\infty} P_t \hat{h} \dif t$ is in the domain of $\mathcal{A}$ and
%	\begin{eqnarray*}
%		-\hat{h}(x)  =  \mathcal{A}  (  \int_0^{\infty}  P_t \hat{h} \dif t ).
%	\end{eqnarray*}
%Thus, \eqref{e:SE1} holds. 
%
%
%(ii) Since
%	\begin{eqnarray*}
%		(\lambda - \mathcal{A} ) f(x) = \lambda f(x) - \mathcal{A} f(x) = \lambda f(x) + \hat{h}(x),
%	\end{eqnarray*}
%	then, from \eqref{e:res}, one has
%	\begin{eqnarray*}
%		f(x) = (\lambda  - \mathcal{A} )^{-1} [ \lambda f(x) + \hat{h}(x)]  = R_{\lambda} [ \lambda f+ \hat{h}](x),
%	\end{eqnarray*}
%	which implies the desired result.
\end{proof}

\begin{proof}[Proof of Lemma \ref{lem:regf}]
(i) It follows from \eqref{e:SE1} that 
	\begin{eqnarray*}
|f(x)|
& \leq &  \int_0^{\infty} |P_t h(x) - \mu(h) |  \dif t
\ \leq \  \|h\|_{\infty} \int_0^{\infty} \| P^*_{t}\delta_x - \mu \|_{ \rm{TV}, \rm{V} }   \dif t.
	\end{eqnarray*}
Then, we know
\begin{eqnarray}\label{e:SE2}
|f(x)| &\leq& C \|h\|_{\infty}(1+V(x)  )
\  \leq \   C \|h\|_{\infty}(1+|x|^2),
\end{eqnarray}
where the last inequality holds from the relationship between $V$ and $|\cdot|^2$ in \eqref{e:BV}.

(ii) Firstly, let  $h \in \mathcal{C}_b^1(\R^d,\R)$, we have $\nabla_u\E[h(X_t^x)]=\E[\nabla_u h(X_t^x)]$ by Lebesgue's dominated convergence theorem. Then we consider the term $ \nabla_u [e^{-\lambda t} \E f(X_t^x)]$. Recall that
	$$
	f(x)=-\int_0^{\infty}P_t[h(x)-\mu(h)]\dif t.
	$$
	By  Lemma \ref{hLef2}, one has
	\begin{eqnarray*}
		\E [\nabla_u f(X_t^x)]
		\ =\ \E[\nabla f(X_t^x) \nabla_u X_t^x]
		\ = \ \E[\nabla f(X_t^x) D_{\mathbb{V}} X_t^x]
		\ = \ \E[D_{\mathbb{V}} f(X_t^x)]
	\ = \ \E[f(X_t^x)\mathcal{I}_{u}^x(t)].
	\end{eqnarray*}

From \eqref{e:SE2}, one has
	\begin{eqnarray}\label{e:f2}
[ \E |f(X_t^x)|^2 ]^{\frac{1}{2}}
\ \leq \ C [ 1+ \E V^2(X_t^x)] ^{\frac{1}{2}}
\ \leq \ C e^{\frac{C_4}{2} t}(1+|x|^2).
	\end{eqnarray}
Combining with the estimate for  $\E | \mathcal{I}_{u}^x(t)|^2$ in \eqref{e:IuexEst} and using H\"{o}lder inequality, one has
	\begin{eqnarray*}
\E|f(X_t^x)\mathcal{I}_{u}^x(t)|
&\leq& [\E f^2 (X_t^x) ]^{\frac{1}{2}} [\E |\mathcal{I}_{u}^x(t) |^2 ]^{\frac{1}{2}}
\ \leq \ C e^{\frac{C_4}{2} t }(1+ |x|^2)  |u| t^{-\frac{1}{2}} e^{C_{{\rm op}}t}
\ < \  \infty,
	\end{eqnarray*}
and	with similar calculations, one has
	\begin{eqnarray*}
		\E|h(X_t^x)\mathcal{I}_{u}^x(t)|
		&\leq&  \| h \|_{\infty}  \E | \mathcal{I}_{u}^x(t) | 		
		\ < \  \infty.
	\end{eqnarray*}
	Here we fix $x$ and $t>0$. Then by Lebesgue's dominated convergence theorem, we have
	\begin{eqnarray*}
		\nabla_u\E[e^{-\lambda t}f(X_t^x)] \ =\ \E[e^{-\lambda t}\nabla_u f(X_t^x)].
	\end{eqnarray*}
	
	It follows from H\"{o}lder inequality that
	\begin{eqnarray}\label{e:Nfx0}
	&& \int_0^{\infty} \left|e^{-\lambda t} (  \lambda  \nabla_{u}\E [ f (X_t^x)  ]  -  \nabla_u\E [   h(X_t^x)] )\right| \dif t  \nonumber \\
	&= & \int_0^{\infty}\left| e^{-\lambda t} (  \lambda  \E [ f (X_t^x)  \mathcal{I}_{u}^x(t) ]  -  \E [ h(X_t^x) \mathcal{I}_u^x(t) ] ) \right|\dif t   \nonumber  \\
	 &\leq&  \int_0^{\infty} e^{-\lambda t}   \lambda  [ \E f^2 (X_t^x) ]^{\frac{1}{2}} [ \E | \mathcal{I}_{u}^x(t)  |^2 ]^{\frac{1}{2}}    \dif t
	+  \int_0^{\infty} e^{-\lambda t}   \| h \|_{\infty}  \E [ |\mathcal{I}_u^x(t)| ]  \dif t.
	\end{eqnarray}

	From estimates for  $\E | \mathcal{I}_{u}^x(t)  |^2$ in \eqref{e:IuexEst}  and  $[ \E |f(X_t^x)|^2 ]^{\frac{1}{2}} $ in \eqref{e:f2}, one has
\begin{eqnarray}\label{e:Nfx1}
\int_0^{\infty} e^{-\lambda t}   \lambda  [ \E f^2 (X_t^x) ]^{\frac{1}{2}} [ \E | \mathcal{I}_{u}^x(t)  |^2 ]^{\frac{1}{2}}  \dif t
&\leq& \int_0^{\infty}  \lambda (1+ |x|^2) e^{\frac{C_4}{2} t } |u| t^{-\frac{1}{2}} e^{ (-\lambda + C_{ {\rm op} })t  } \dif t \nonumber  \\
&\leq&  C (1+ |x|^2) |u|,
	\end{eqnarray}
	where the last inequality holds from taking $\lambda>\frac{C_4}{2} + C_{ \textrm{op} }$.

	From estimate for  $\E | \mathcal{I}_{u}^x(t)  |$ in \eqref{e:IuexEst}, one has
	\begin{eqnarray}\label{e:Nfx2}
\int_0^{\infty} e^{-\lambda t}   \|  h \|_{\infty}  \E [ | \mathcal{I}_u^x(t) | ]    \dif t
&\leq& \int_0^{\infty} e^{-\lambda t}  \|  h \|_{\infty} \frac{C|u|}{ t^{1/2} }  e^{C_{ \textrm{op} } t} \dif t
\ \leq \ C\|h\|_{\infty} |u|,
	\end{eqnarray}
	where the last inequality holds from taking $\lambda> C_{ \textrm{op} }$.

	Since
	\begin{eqnarray*}
		f(x)=  \int_0^{\infty} e^{-\lambda t} P_t[  \lambda f(x) - h(x) + \mu(h) ] \dif t,  \quad  \forall  \lambda>0,
	\end{eqnarray*}
	by Lebesgue's dominated convergence theorem, we have
	\begin{eqnarray}\label{e:nuf}
		\nabla_u f(x)&=&  \int_0^{\infty}  \nabla_u [\lambda e^{-\lambda t} \E f(X_t^x) - e^{-\lambda t} \E h(X_t^x)] \dif t \nonumber  \\
		&=& \int_0^{\infty} e^{-\lambda t} (  \lambda  \nabla_{u}\E [ f (X_t^x)  ]  -  \nabla_u\E [   h(X_t^x)] ) \dif t   \nonumber \\
		&=&\int_0^{\infty} e^{-\lambda t} (  \lambda  \E [ f (X_t^x)  \mathcal{I}_{u}^x(t) ]  -  \E [  h(X_t^x) \mathcal{I}_{u}^x(t) ] ) \dif t.
	\end{eqnarray}
	
	Taking $\lambda= \frac{C_4}{2} + C_{ \textrm{op} } +1$, combining \eqref{e:Nfx0}, \eqref{e:Nfx1} and \eqref{e:Nfx2}, one has
	\begin{eqnarray*}
|\nabla_{u} f(x)|
&\leq& C \| h \|_{\infty} (1+ |x|^2) |u|.
	\end{eqnarray*}

Secondly, we extend $h\in \mathcal{C}_b^1(\R^d,\R)$ to $h\in \mathcal{B}_b(\R^d,\R)$ by using standard approximation (see \cite[pp. 968-969]{FSX1}). Let $h\in \mathcal{B}_b(\R^d,\R)$, and define
	\begin{eqnarray*}
		h_{\delta}(x) = \int_{\R^d} \varphi_{\delta}(y) h(x-y) \dif y, \quad \delta>0,
	\end{eqnarray*}
where $\varphi_{\delta}$ is the density function of the normal distribution $\mathcal{N}(0,\delta^2 I)$. Thus $h_{\delta}$ is smooth, $\| h_{\delta} \|_{\infty}  \leq \| h\|_{\infty}$ and the solution to the Poisson equation \eqref{e:SE} with $h$ replaced by $h_{\delta}$, is
         \begin{eqnarray*}
	f_{\delta}(x)= - \int_0^{\infty} P_t[   h_{\delta}(x) - \mu(h_{\delta}) ] \dif t.
	\end{eqnarray*}

	Denote $\hat{h}_{\delta} = -h_{\delta} + \mu(h_{\delta})$.  Since $h_{\delta} \in \mathcal{C}_b^1(\R^d,\R)$, one has $h_{\delta}(x) \leq \|h_{\delta}\|_{\infty}( 1+ V(x))$ for all $x\in \R^d$. From Lemma \ref{lem:AV2} , one has
	\begin{eqnarray*}
|P_t h_{\delta}(x) - \mu(h_{\delta}) |
&\leq& \|h_{\delta}\|_{\infty} \| P^*_{t}\delta_x - \mu \|_{\rm{TV},\rm{V}}
\ \leq \ C \|h_{\delta}\|_{\infty}(1+V(x)) e^{-ct}.
	\end{eqnarray*}

With similar calculations in the proof for Lemma \ref{prop:ST}, one has
	\begin{eqnarray*}
|f_{\delta}(x)|
&\leq&  C \|h_{\delta}\|_{\infty}(1+ |x|^2 ).
\end{eqnarray*}
Then \eqref{e:SE2}  holds.

	By the dominated convergence theorem, one has
	\begin{eqnarray*}
	\lim_{\delta \to 0} f_{\delta}(x) \ = \ - \int_0^{\infty} P_t[   h(x) - \mu(h) ] \dif t
	\ = \ f(x).
	\end{eqnarray*}
	
From \eqref{e:SE2} and $\| h_{\delta} \|_{\infty} \leq \| h \|_{\infty}$, we know
	\begin{eqnarray*}
| f_{\delta}(x)|
&\leq& C \|h_{\delta}\|_{\infty}( 1 + |x|^2 )
\ \leq \ C \|h\|_{\infty}(1+ |x|^2).
\end{eqnarray*}
Letting $\delta \to 0$, we know
\begin{eqnarray*}
| f(x)|
\ \leq \ C \|h\|_{\infty}( 1 + |x|^2 ).
\end{eqnarray*}

(ii) From calculations for \eqref{e:nuf}, one has
	\begin{eqnarray*}
\nabla_u f_{\delta}(x)
&=&\int_0^{\infty} e^{-\lambda t} (  \lambda  \E [ f_{\delta} (X_t^x)  \mathcal{I}_{u}^x(t) ]  -  \E [  h_{\delta}(X_t^x) \mathcal{I}_{u}^x(t) ] ) \dif t.
	\end{eqnarray*}
	
With similar calculations for the proof of \eqref{e:SE2}, one has
\begin{eqnarray*}
|\nabla_{u} f_{\delta}(x)|
&\leq&  C \| h_{\delta} \|_{\infty}  (1+ |x|^2) |u|
\ \leq \ C \| h \|_{\infty}(1+|x|^2) |u|.
\end{eqnarray*}

	Since the operator $\nabla$ is closed, by using the dominated convergence theorem, one has
	\begin{eqnarray*}
	         \lim_{\delta \to 0} \nabla_u f_{\delta}(x)
	         &=& \lim_{\delta \to 0}  \int_0^{\infty} e^{-\lambda t} (  \lambda  \E [ f_{\delta} (X_t^x)  \mathcal{I}_{u}^x(t) ]  -  \E [  h_{\delta}(X_t^x) \mathcal{I}_{u}^x(t) ] ) \dif t \\
	         &=&  \int_0^{\infty} e^{-\lambda t} (  \lambda  \E [ f (X_t^x)  \mathcal{I}_{u}^x(t) ]  -  \E [  h(X_t^x) \mathcal{I}_{u}^x(t) ] ) \dif t \\
	         &=& \nabla_u f(x).
	\end{eqnarray*}

	 Letting $\delta \to 0$, we know
     \begin{eqnarray*}
|\nabla_{u} f(x)|
&\leq&  C \| h \|_{\infty}  (1+ |x|^2) |u|.
	\end{eqnarray*}
The proof is complete.
\end{proof}

\end{appendix}

\section*{Acknowledgements}
L. Xu is supported in part by NSFC grant (No. 12071499), Macao S.A.R grant FDCT  0090/2019/A2 and University of Macau grant  MYRG2018-00133-FST.
G. Pang is supported in part by  the US National Science Foundation grants DMS-1715875 and DMS-2108683, and Army Research Office grant W911NF-17-1-0019.


\begin{thebibliography}{17}
\footnotesize{


\bibitem{AR20}
Aghajani R., Ramanan K., The limit of stationary distributions of many-server queues in the Halfin-Whitt regime, Mathematics of Operations Research, 45(3): 1016-1055, (2020).


\bibitem{AHP1}  Arapostathis A., Hmedi H., Pang G. D., On uniform exponential ergodicity of Markovian multiclass many-server queues in the Halfin-Whitt regime, Mathematics of Operations Research, 46(2): 772-796, (2021).


\bibitem{AHPS1}  Arapostathis A., Hmedi H., Pang G. D., Sandri\'{c} N., Uniform polynomial rates of convergence for a class of L\'{e}vy-driven controlled SDEs arising in multiclass many-server queues, Modeling, Stochastic Control, Optimization, and Applications (G. Yin and Q. Zhang, eds.). The IMA Volumes in Mathematics and its Applications. Vol. 164, 1-20, Springer, New York, (2019).


\bibitem{APS1}  Arapostathis A., Pang G. D., Sandri\'{c} N., Ergodicity of a L\'{e}vy-driven SDE arising from multiclass many-server queues, The Annals of Applied Probability, 29(2): 1070-1126, (2019).



\bibitem{BBC1}  Benaim M., Bouguet F., Cloez B., Ergodicity of inhomogeneous Markov chains through asymptotic pseudotrajectories,  The Annals of Applied Probability,  27(5): 3004-3049, (2017).


\bibitem{BD1} Braverman A., Dai J. G., Stein's method for steady state diffusion approximations of M/Ph/n+M systems, The Annals of Applied Probability, 27(1): 550-581, (2017).


\bibitem{BDF20} Braverman A., Dai J. G., Fang X., High order steady-state diffusion approximations, \href{https://arxiv.org/abs/2012.02824}{https://arxiv.org/abs/2012.02824}, (2020).



\bibitem{BDF} Braverman A., Dai J. G., Feng J. K., Stein's method for steady state diffusion approximations: An introduction through the Erlang-A and Erlang-C models, Stochastic Systems, 6(2): 301-366, (2016).



  \bibitem{BDMS1}  Brosse N., Durmus A., Moulines E., Sabanis S., The tamed unadjusted Langevin algorithm, Stochastic Processes and their Applications, 129(10): 3638-3663, (2019).
  
  
  \bibitem{BW}
Browne S., Whitt W., Piecewise-linear diffusion processes, Advances in Queueing, Dshalalow J. ed., CRC Press, 463-480, (1995).



 \bibitem{BCR1}  Budhiraja A., Chen J.,  Rubenthaler S., A numerical scheme for invariant distributions of constrained diffusions, Mathematics of Operations Research, 39(2): 262-289, (2014).



\bibitem{BL}
Budhiraja A., Lee C., Stationary distribution convergence for generalized Jackson networks in heavy traffic, Mathematics of Operations Research, 34(1): 45-56, (2009).



\bibitem{CM08} Chatterjee S., Meckes E., Multivariate normal approximation using exchangeable pairs, Alea, 4: 257-283, (2008).




\bibitem{DDG1} Dai J. G., Dieker A. B., Gao X. F., Validity of heavy traffic steady-state approximations in many server queues with abandonment, Queueing Systems, 78(1): 1-29, (2014).

\bibitem{DH92}
Dai J. G., Harrison J. M., Reflected Brownian motion in an orthant: numerical methods for steady-state analysis, The Annals of Applied Probability, 2: 65-86,  (1992).


\bibitem{DH1} Dai J. G., He S. C., Many-server queues with customer abandonment: Numerical analysis of their diffusion model, Stochastic Systems, 3(1): 96-146, (2013).


\bibitem{DHT1} Dai J. G., He S. C., Tezcan T., Many server diffusion limits for G/Ph/n+GI queues, The Annals of Applied Probability, 20(5): 1854-1890, (2010).



\bibitem{DG1} Dieker A. B., Gao X. F.,  Positive recurrence of piecewise  Ornstein-Uhlenbeck processes and common quadratic Lyapunov functions, The Annals of Applied Probability, 23(4): 1291-1317, (2013).


\bibitem{DFMS1} Douc R., Fort G., Moulines E., Soulier P., Practical drift conditions for subgeometric rates of convergence, The Annals of Applied Probability, 14(3): 1353-1377, (2014).


\bibitem{DM2} Durmus A., Moulines E., Nonasymptotic convergence analysis for the unadjusted Langevin algorithm, The Annals of Applied Probability, 27(3): 1551-1587, (2017).


  \bibitem{DM1}  Durmus A., Moulines E., High-dimensional Bayesian inference via the unadjusted Langevin algorithm, Bernoulli, 25(4A): 2854-2882, (2019).



\bibitem{FSX1} Fang X., Shao Q. M., Xu L. H., Multivariate approximations in Wasserstein distance by Stein's method and Bismut's formula, Probability Theory and Related Fields, 174: 945-979, (2019).




\bibitem{fukasawa2020efficient} 
Fukasawa M., Ob{\l}{\'o}j J. Efficient discretisation of stochastic differential equations. Stochastics, 92(6), 833-851, (2020).


\bibitem{GG13b}
Gamarnik D.,  Goldberg D. A., On the rate of convergence to stationarity of the M/M/n queue in the Halfin-Whitt regime, The
 Annals of Applied Probability, 23(5): 1879-1912, (2013).



\bibitem{GG13a}
Gamarnik D.,  Goldberg D. A., Steady-state GI/GI/n queue in the Halfin-Whitt regime, The Annals of Applied Probability, 23(6):  2382-2419, (2013).



\bibitem{GS}
Gamarnik D., Stolyar A. L., Multiclass multiserver queueing system in the Halfin-Whitt heavy traffic regime: asymptotics of the stationary distribution, Queueing System, 71(1-2): 25-51, (2012).

\bibitem{GZ}
Gamarnik  D.,  Zeevi A., Validity of heavy traffic steady-state approximation in generalized Jackson networks,  The Annals of Applied Probability, 16(1): 56-90, (2006).


\bibitem{GHL1} Gloter A., Honore I., Loukianova D., Approximation of the invariant distribution for a class of ergodic jump diffusions, ESAIM: Probability and Statistics, 24: 883-913, (2020).


\bibitem{Gur1} Gurvich I., Diffusion models and steady state approximations for exponentially ergodic Markovian queues, The Annals of Applied Probability, 24(6): 2527-2559, (2014).

\bibitem{G14}
 Gurvich I., Validity of heavy-traffic steady-state approximations in multiclass queueing networks: the case of queue-ratio disciplines, Mathematics of Operations Research, 39(1): 121-162, (2014).



\bibitem{MH2} Hairer M., An introduction to stochastic PDEs.


\bibitem{HM1} Hairer M., Mattingly J. C., Spectral gaps in Wasserstein distances and the 2D stochastic Navier Stokes equations, The Annals of Probability, 36(6): 2050-2091, (2008).


\bibitem{HW87} Harrison J. M., Williams R. J., Multidimensional reflected Brownian motions having exponential stationary distributions, The Annals of Probability, 15(1): 115-137, (1987).



\bibitem{jones2004markov}
Jones G. L., On the Markov chain central limit theorem. Probability surveys, 1: 299-320, (2004).


\bibitem{KT1} Kusuoka S., Tudor C. A., Stein's method for invariant measures of diffusions via Malliavin calculus, Stochastic Processes and their Applications, 122(4): 1627-1651, (2012).



\bibitem{LP1}  Lamberton D., Pages G., Recursive computation of the invariant distribution of a diffusion, Bernoulli, 8(3): 367-405, (2002).


\bibitem{lu2020central}
Lu J., Tan Y., Xu L. Central limit theorem and Self-normalized Cram{\'e}r-type moderate deviation for Euler-Maruyama Scheme. arXiv preprint arXiv:2012.04328, (2020)



\bibitem{MSH1} Mattingly J. C., Stuart A. M., Highamc D. J., Ergodicity for SDEs and approximations: locally Lipschitz vector fields and degenerate noise, Stochastic Processes and their Applications, 101: 185-232, (2002).



\bibitem{MDL1} McLeish D. L., Dependent central limit theorems and invariance principles, The Annals of Probability, 2: 620-628, (1974).

\bibitem{MT2} Meyn S. P., Tweedie R. L.,  Stability of Markovian processes III: Foster Lyapunov criteria for continuous time processes, Advances in Applied Probability, 25(3), 518-548, (1993).



\bibitem{P1} Panloup F.,  Computation of the invariant measure for a L\'{e}vy driven SDE: rate of convergence, Stochastic Processes and their Applications, 118(8): 1351-1384, (2008).



\bibitem{P2} Panloup F., Recursive computation of the invariant measure of a stochastic differential equation driven by a L\'{e}vy process, The Annals of Applied Probability, 18(2): 379-426, (2008).


\bibitem{PJR1} Partington J. R., Linear operators and linear systems, London Mathematical Society Student Texts, Cambridge University Press, Cambridge, (2004).


\bibitem{PZ1} Peszat S., Zabczyk, J., Strong Feller property and irreducibility for diffusions on Hilbert spaces, The Annals of Probability, 23: 157-172, (1995).




\bibitem{PR} Puhalskii A. A., Reiman M. I., The multiclass GI/PH/N queue in the Halfin-Whitt regime, Advances in Applied Probability, 32(2): 564-595, (2000).



\bibitem{RR-09} Reinert G., Roellin A., Multivariate normal approximation with Stein's method of exchangeable pairs under a general linearity condition, The Annals of Probability, 37(6): 2150-2173, (2009).


\bibitem{SS1} Sethuraman S., A martingale central limit theorem.

\bibitem{S15} Stolyar A. L., Tightness of stationary distributions of a flexible-server system in the Halfin-Whitt asymptotic regime, Stochastic Systems, 5(2): 239-267, (2015).



 \bibitem{T3} Talay D., Stochastic Hamiltonian systems: exponential convergence to the invariant measure, and discretization by the implicit Euler scheme, Markov Process Related Fields, 8(2): 163-198,  (2002).



\bibitem{TT1} Tuominen P., Tweedie R. L., Subgeometric rates of convergence of f-ergodic Markov chains, Advances in Applied Probability, 26(3): 775-798, (1994).



\bibitem{WXX1} Wang F. Y., Xiong J., Xu L., Asymptotics of sample entropy production rate for stochastic differential equations, Journal of Statistical Physics, 163(5): 1211-1234,  (2016).


\bibitem{WLM2} Wu L. M., Large and moderate deviations and exponential convergence for stochastic damping Hamiltonian systems, Stochastic Processes and their Applications, 91: 205-238, (2001).


\bibitem{WLM1} Wu L. M., Moderate deviations of dependent random variables related to CLT, The Annals of Probability, 1(23): 420-445, (1995).



\bibitem{YY16} Ye H. Q., Yao D. D.,  Diffusion limit of fair resource control-stationarity and interchange of limits, Mathematics of Operations Research, 41(4): 1161-1207, (2016).


 \bibitem{YY18} Ye H. Q., Yao D. D., Justifying diffusion approximations for multiclass queueing networks under a moment condition, The Annals of Applied Probability, 28(6): 3652-3697, (2018).

 }
\end{thebibliography}
\end{document}